\DeclareMathAlphabet\mathbfcal{OMS}{cmsy}{b}{n}
\newtheorem{theorem}{Theorem}[section]
\newtheorem{lemma}[theorem]{Lemma}
\newtheorem{proposition}[theorem]{Proposition}
\newtheorem{corollary}[theorem]{Corollary}
\theoremstyle{definition}
\newtheorem{definition}[theorem]{Definition}
\newtheorem{discussion}[theorem]{Discussion}
\newtheorem{example}[theorem]{Example}
\theoremstyle{remark}
\newtheorem{remark}[theorem]{Remark}
\numberwithin{equation}{section}
\newcommand{\esssup}{{\rm ess.sup}}
\newcommand{\field}[1]{\mathbb{#1}}
\newcommand{\R}{\field{R}}
\newcommand{\N}{\field{N}}
\begin{document}

	\title[Maxitive functions with respect to general orders]{Maxitive functions with respect to general orders}

    \author{Michael Kupper}
	\address{Department of Mathematics and Statistics, University of Konstanz}
	\email{kupper@uni-konstanz.de}
	
		\author{Jos\'e M.~Zapata}
	\address{Universidad de Murcia.  Dpto. de Estadística e Investigación Operativa,  30100 Espinardo, Murcia, Spain}
	\email{jmzg1@um.es}
	
\thanks{ We wish to express our gratitude to two anonymous referees for their careful reading of the manuscript and for their comments, which significantly improved the presentation. 
The second author was partially supported by Fundación Séneca - ACyT Región de Murcia project 21955/PI/22, Agencia Estatal de Investigación (Government of Spain) and Projects PID2021-122126NB-C32 and PID2022-137396NB-I00   funded by MICIU/AEI/10.13039/ 501100011033/ and FEDER A way of making Europe.}

		\date{\today}

	\subjclass[2010]{}

\begin{abstract} 
In decision-making, maxitive functions are used for worst-case and best-case evaluations. Maxitivity gives rise to a rich structure that is well-studied in the context of the pointwise order. In this article, we investigate maxitivity with respect to general preorders and provide a representation theorem for such functions. The results are illustrated for different stochastic orders in the literature, including the usual stochastic order, the increasing convex/concave order, and the dispersive order.  

\smallskip\noindent
 \emph{Key words:} Maxitive functions, representation theorem, stochastic orderings 
   
   \mbox{}\\
 		\smallskip
		\noindent \emph{AMS 2020 Subject Classification: 62C86, 47H05, 28A25, 91B05} 
	\end{abstract}

	\maketitle
	
	\setcounter{tocdepth}{1}

\section{Introduction}
In decision-making under risk and uncertainty, one typically considers a set of gambles \( X \) and a function \( \psi \colon X \to \mathbb{R} \), which assigns to each gamble \( f \in X \) a value \( \psi(f) \), representing its price, risk, or utility. In the context of imprecise probabilities, such functions are referred to as  lower/upper previsions; see \cite{walley}. In financial mathematics, they are commonly called risk measures \cite{follmer} or nonlinear expectations \cite{peng}.

\smallskip
In many cases, the function \( \psi \) takes the form of an expectation, where the value \( \psi(f) \) represents the weighted average of the values associated with the gamble \( f \). However, decisions are often based on assessments of best-case or worst-case scenarios, rather than relying on averages. These evaluations are captured by \emph{maxitive functions}. A function \( \psi \) is called maxitive if it satisfies the condition \( \psi(f \vee g) = \max\{\psi(f), \psi(g)\} \) for any two gambles \( f \) and \( g \), where \( f \vee g \) denotes the pointwise supremum of \( f \) and \( g \).

\smallskip
Maxitive functions play a crucial role in \emph{possibility theory}~\cite{dubois, dubois2, zadeh} and serve as aggregation functions in various fields such as economics, information fusion, and control theory~\cite{grabisch, torra}. Under reasonable assumptions, these functions can be represented in terms of maxitive integrals, such as the Shilkret integral discussed in~\cite{cattaneo,kupper2, poncet,shilkret}. Beyond decision theory, maxitive functions also arise in other areas, including idempotent or tropical analysis~\cite{maslov} and large deviations theory~\cite{kupper, kupper2, puhalskii}. 

\smallskip
In probability theory, statistics, and particularly decision theory, a wide range of criteria exist to define order relations between gambles beyond the pointwise order~\cite{muller, shaked2007}. While most of the literature has focused on maxitivity with respect to pointwise order, less attention has been paid to maxitivity with respect to more general orders. The aim of this work is to fill this gap by studying maxitivity in the context of preorders, which enables the extension of the maxitive structure and its beneficial properties to a wider class of functions. By passing from the pointwise order to weaker \emph{stochastic orders}, new maxitive functions are obtained. For example, in~\cite{wang}, it is shown that the value-at-risk is maxitive with respect to the usual stochastic order.

\smallskip  
The main result of this paper provides a characterization and \emph{representation of countably maxitive functions} on a preordered space.  
The proof relies on recent representation results from \cite{delbaen,max} for convex monotone functions on spaces of bounded continuous functions, and incorporates dual representations of sublevel sets as discussed in \cite{drapeau}. A key assumption is that the preorder \( \preccurlyeq \) admits a numerical representation given by \( f \preccurlyeq g \) if and only if \( \phi(f) \leq \phi(g) \) for all \( \phi \) in some Polish space\footnote{I.e., a separable completely metrizable topological space.} \( \Phi \), where the mapping \( \phi \mapsto \phi(f) \) is lower semicontinuous\footnote{I.e., $\{\phi\in\Phi\colon \phi(f)\le s\}$ is closed for all $s\in\mathbb{R}$.} for all \( f\in X\).
 This assumption is fulfilled for typical stochastic orders. As an application of the main result, we obtain characterizations of maxitive functions with respect to the usual stochastic order, the increasing convex order, the convex order, the increasing concave order, and the dispersive order; see \cite{muller, shaked2007}.

\smallskip
The paper is structured as follows: In Section~\ref{sec:motivation}, we provide some motivation for maxitivity in decision-making and possibility theory. Section~\ref{sec:basics} introduces the basic concepts and definitions. In Section~\ref{sec:mainResult}, we present and prove the main result. Section~\ref{sec:ex} explores variations of the main result in the context of different stochastic orders. Finally, the appendix provides representation results for maxitive risk measures on \(C_b\), along with auxiliary results related to quantile functions.

\section{Maxitive functions in decision-making}\label{sec:motivation}

Let \( X \) be a set of gambles of the form \( f \colon S \to [-\infty,\infty] \), where each gamble $f$ assigns a loss \( f(s) \) to each possible state \( s \in S \).\footnote{Here, we view negative losses as gains. We note that decision functions are usually defined on spaces of positions. However, for our purposes, describing them in terms of losses seems to be more transparent from a mathematical perspective. Since positions and losses only differ by their sign, all results can be easily transformed.} We consider a decision maker who evaluates the gambles using a decision function \( \psi \colon X \to (-\infty, \infty] \), which in risk analysis is commonly referred to as a \emph{risk measure}; see \cite{follmer}. We adopt the convention that \( \psi(f) \le \psi(g) \) indicates that the gamble \( g \) is considered riskier than the gamble \( f \). The set of acceptable gambles for the decision maker is defined by the \emph{acceptance set}, given by
\(A := \{ f \in X \colon \psi(f) \le 0 \}\).

\smallskip
We assume that the decision maker believes in an ordering $\preccurlyeq$, thus evaluating gambles $f,g\in X$ satisfying $f\preccurlyeq g$ with $\psi(f)\le\psi(g)$. This could be the \emph{pointwise order}, in which case
$f\preccurlyeq_{\rm pw} g$ if and only if $f(s)\le g(s)$ for all $s\in S$.  
In decision-making, however, there are numerous methods for comparing gambles beyond pointwise ordering. These include comparisons based on quantiles, the spread of quantiles, and other distributional properties. Such considerations lead to various stochastic orders, including the \emph{usual stochastic order}, the \emph{increasing convex stochastic order}, or the \emph{dispersive order}. In the first case, we assign probabilities to the states in $S$ and order gambles $f,g\in X$ using their quantile functions, i.e., $f\preccurlyeq_{\rm st} g$ if and only if $q_f(u)\le q_g(u)$ for all $u\in (0,1)$ (or equivalently, if the cumulative distribution function of $f$ is pointwise greater than the cumulative distribution function of $g$).  For further examples of stochastic orders and their definitions, we refer to the monographs \cite{muller,shaked2007}. 

\smallskip
In this article, we focus on decision functions whose acceptance sets remain invariant under \emph{worst-case aggregation}. To be more precise, consider a decision maker with an acceptance set \( A \). For two acceptable positions \( f, g \in A \), we require that the supremum \( f \vee g \) with respect to the decision maker's order \( \preccurlyeq \) exists and remains acceptable, i.e., \( f \vee g \in A \). Equivalently, if \( f \vee g \) is not acceptable, then either \( f \) or \( g \) is not acceptable. Intuitively, this means that losses in certain states cannot be compensated by gains in other states. 
Suppose, in addition, that the decision function \( \psi \)  is monotone, i.e., \( f \preccurlyeq g \) implies \(\psi(f)\le\psi(g)\), and satisfies the translation property, i.e., \( \psi(f + c) = \psi(f) + c \) for all \( f \in X \) and \( c \in \mathbb{R} \), where \( f + c \) denotes the function \( f \) shifted by a constant \( c \in \mathbb{R} \). We then obtain that \( \psi \) is \textit{maxitive}, i.e.,
\[
\psi(f\vee g)=\max\{\psi(f),\psi(g)\}\quad\mbox{for all }f,g\in X.\footnote{ Indeed, since both \( f \) and \( g \) are smaller than \( f \vee g \), we obtain the inequality \( \geq \) by monotonicity. As for the other inequality \( \leq \), let  
\(s := \max\{\psi(f), \psi(g)\}\)  
so that \( f - s, g - s \in A \) due to the translation property, which implies  
\(\psi(f \vee g) \leq s\)  
as the supremum of \( f - s \) and \( g - s \) remains acceptable.
}
\]
We also want to mention that in the literature, there are different versions of maxitivity, 
all of which require \( \psi\left( \vee_{i \in I} f_i \right) = \sup_{i \in I} \psi(f_i) \), but for different families \( (f_i)_{i \in I}\), for which the supremum \( \vee_{i \in I} f_i \) is assumed to exist; see e.g.~\cite{cattaneo,shilkret}. For example, \( \psi \) is called \emph{completely maxitive} if this holds for all families \( (f_i)_{i \in I} \), and \( \psi \) is called \emph{countably maxitive} if this holds for countable families \( (f_n)_{n \in \mathbb{N}} \). Furthermore, a topological perspective on maxitivity, which is closely linked to the large deviation principle, was recently investigated in \cite{kupper2}; see also \cite{kupper,puhalskii}.

\subsection{Maxitivity for the pointwise order in the context of \emph{possibility theory}; see~\cite{dubois,dubois2,zadeh}} As a simple example, consider a loss \( f \) that depends on three possible scenarios \( S = \{\text{good, average, bad}\} \), each with a certain plausibility given by \( \pi \colon S \to [0,1] \). The weighted worst-case criterion then leads to the completely maxitive function  
\[\psi(f) = \max_{s \in S} f(s) \pi(s)\] on the space of positive gambles \( f \colon S \to [0, \infty] \). We note that this example generalizes to arbitrary state spaces \( S \). Moreover, the mapping \( \sup_{s \in S} f(s) \pi(s) \) coincides with the \emph{Shilkret integral} \( \int^S f \, \mathrm{d} \Pi \) with respect to the possibility measure \( \Pi \), which assigns each subset \( B \subset S \) the value \( \Pi(B) := \sup_{s \in B} \pi(s) \), where 
\( \pi \) is a possibility distribution; see e.g.~\cite{cattaneo,cooman,mesiar,shilkret}. 
Another closely related example is the penalized worst-case evaluation \( \sup_{s \in S} \{ f(s) - \alpha(s) \} \) for some penalty \( \alpha \colon S \to [-\infty, \infty] \). Again this is a completely maxitive function on the space \( X \) of all gambles \( f \colon S \to [-\infty, \infty] \). Conversely, it is known  that every completely maxitive function \( \psi \colon X \to [-\infty, \infty] \), which has the translation property and satisfies \( \psi(0) \in \mathbb{R} \), admits the representation
\begin{equation}\label{eq:rep pw}
\psi(f) = \sup_{s \in S} \{ f(s) - \alpha(s) \} \quad \text{for all } f \in X,
\end{equation}
for some penalty \( \alpha \colon S \to [-\infty, \infty] \); see~\cite[Corollary 7]{cattaneo}. The right-hand side in the previous equation corresponds to the so-called convex integral, which is obtained as a transformation of the Shilkret integral; see \cite{cattaneo}.  
In the translation-invariant case (particularly for monetary risk measures), these considerations show that maxitive functions with respect to the pointwise order take the form~\eqref{eq:rep pw} of a penalized worst-case evaluation over the state space.

\subsection{Maxitivity with respect to other orders} As an illustration, we start with the usual stochastic order. In this case, we order the distributions of the gambles according to their quantile functions. We recall that for two gambles \( f \) and \( g \), there exists a gamble \( f \vee_{\rm st} g \) whose quantile function is the smallest one that dominates the quantiles \( q_f \) and \( q_g \); see~\cite{muller}. A decision function is then maxitive if 
\( \psi(f \vee_{\rm st} g) = \max\{\psi(f), \psi(g)\} \) for all gambles $f,g$.
Intuitively, maxitivity still ensures that losses in certain states cannot be compensated by gains in other states. However, in this context, the relevant `states' refer to the levels of the quantile functions.  More formally, suppose we have two gambles \( f \) and \( g \), where \( f \) is acceptable but \( f \vee_{\rm st} g \) is not acceptable, for a decision function \( \psi \) with acceptance set \( A \). If \( \psi \) is maxitive, then any gamble \( h \) that satisfies \( q_s(h) = q_s(g) \) for all \( s \) in the ‘bad states’ \( U = \{s \colon q_g(s) \geq q_f(s)\} \) is not acceptable, regardless of its values \( q_s(h) \) in the ‘good states’ \( s \in U^c \). Indeed, since \( f \vee_{\rm st} g \preccurlyeq f \vee_{\rm st} h \), it follows that  
\[
0 < \psi(f \vee_{\rm st} g) \leq \psi(f \vee_{\rm st} h) = \max\{\psi(f), \psi(h)\}.
\]  
Given that \( \psi(f) \leq 0 \), this implies \( \psi(h) > 0 \), meaning that \( h \) is not acceptable. 
In other words, maxitivity with respect to the usual stochastic order implies that risky quantiles at certain levels  (`bad states') cannot be compensated by less risky quantiles at other levels  (`good states'). 
When dealing with probability distributions, it seems natural to consider quantities (that are closely linked to the distribution) as the relevant `states'. 
 A typical example of such a maxitive function is the \emph{Value at Risk}, which is widely used in the financial industry. Formally, the Value at Risk at level \( s \in (0,1) \), defined by \( {\rm VaR}_s(f) := q_f(s) \), is maxitive with respect to the usual stochastic order but is not maxitive with respect to the pointwise order; see~\cite{wang}.
 
 In the context of model aggregation in risk analysis, an interesting motivation for maxitivity with respect to the usual stochastic order was recently presented in~\cite{wang}, where the topic is discussed in great detail. Simplifying their example, consider a risk analyst who evaluates random losses using different methods, resulting in a set of possible risk models \( \{ f_1, \dots, f_n \} \), which are aggregated into a worst-case model \( f^\ast := f_1 \vee_{\rm st} \cdots \vee_{\rm st} f_n \). Under maxitivity, we obtain the desirable property that the risk of the worst-case model \( \psi(f^\ast) \) equals the worst-case risk \( \max\{\psi(f_1), \dots, \psi(f_n)\} \). Moreover, it is shown in \cite{wang} that every real-valued function $\psi$, which has the translation property,  is lower semicontinuous  with respect to the convergence in distribution\footnote{I.e., $\liminf_{n\to\infty} \psi(f_n)\ge \psi(f)$ whenever $f_n$ converges to $f$ in distribution.} and is completely maxitive (with respect the usual stochastic order) admits a representation
\begin{equation}\label{eq:rep st}
 \psi(f)=\sup_{s\in(0,1)}\{q_f(s)-\alpha(s)\}\quad\mbox{ for all }f,
\end{equation}
 for some penalty $\alpha\colon (0,1)\to[-\infty,\infty]$. 
 The representations in \eqref{eq:rep st} and \eqref{eq:rep pw} are closely related, as \eqref{eq:rep st} can be interpreted as an analogue of \eqref{eq:rep pw}, where quantile evaluation replaces scenario-wise evaluation.

  \smallskip
  Another widely used preorder in decision-making is the \emph{dispersive order}, which is employed to compare the dispersion or variability of two gambles. Specifically, for two gambles \( f, g \), we write \( f \preccurlyeq_{\rm disp} g \) if and only if  
 \[
 q_f(v) - q_f(u) \leq q_g(v) - q_g(u) \quad \text{whenever } 0 < u < v < 1.
 \]  
 For any two gambles \( f \) and \( g \), there exists a gamble \( f \vee_{\rm disp} g \), which is the least upper bound of \( f \) and \( g \) with respect to \( \preccurlyeq_{\rm disp} \). Intuitively, \( f \vee_{\rm disp} g \) is the least dispersed gamble that is more dispersed than both \( f \) and \( g \); see \cite{muller}. We say that a function \( \psi \colon X \to \mathbb{R} \) is maxitive with respect to \( \preccurlyeq_{\rm disp} \) if \( \psi(f \vee_{\rm disp} g) = \max\{\psi(f), \psi(g)\} \) for any two gambles $f,g$.  In this context, maxitivity once again ensures that losses in certain states cannot be offset by gains in others. Here, the relevant `states' correspond to all pairs of levels \( (u,v) \) with \( 0 < u < v < 1 \), at which the quantile spread \( q_g(v) - q_g(u) \) is evaluated. Thereby, maxitivity with respect to the dispersive order implies that risky quantile spreads at certain pairs of levels cannot be compensated by less risky quantile spreads at other levels.

\smallskip
These ideas naturally extend to other partial orders. 
Consider a decision-maker who compares gambles based on the evaluations of certain relevant features represented by a set \(\Phi\) of functions, i.e.,  \(f \preccurlyeq g\) if and only if \(\phi(f) \leq \phi(g)\) for all \(\phi \in \Phi\).\footnote{For the pointwise order, we consider the evaluation \(\phi_s(f) = f(s)\) for each state \(s \in S\). For the usual stochastic order, the evaluation is given by \(\phi_s(f) = q_f(s)\) at each quantile level \(s \in (0,1)\). For the dispersive order, we use the evaluation \(\phi_{(u,v)}(f) = q_f(v) - q_f(u)\) for each pair \((u,v) \in \Delta\) of quantile levels.}   
In this situation, assuming that any two gambles have a least upper bound, a decision function \( \psi\colon X\to\mathbb{R} \) is called \textit{maxitive} if it satisfies  
\(\psi(f\vee_{\Phi} g) = \max\{\psi(f), \psi(g)\}\)
for all gambles \( f, g \). 
We prove in this paper that, in the translation-invariant case and under reasonable assumptions,  maxitive functions can be represented in the form 
\begin{equation}\label{eq:rep general}
\psi(f) = \sup_{\phi \in \Phi} \{\phi(f) - \alpha(\phi)\},
\end{equation}  
for some penalty function \(\alpha \colon \Phi \to [-\infty, \infty]\).  However, in general, a function that admits a representation \eqref{eq:rep general} is not necessarily maxitive. To ensure this, the penalty has to satisfy an additional stability condition. Furthermore, we want to emphasize that the \(\phi\)'s generally do not satisfy the translation property (e.g.~for the dispersive order), in which case the representation in \eqref{eq:rep general} is more involved.

\section{Basic concepts and definitions}\label{sec:basics}

In this section, we provide the formal definitions of \emph{maxitivity with respect to general orders} and introduce the relevant related concepts. Let \( X \) be a non-empty set\footnote{In the previous motivation, \(X\) was a set of gambles.}, which is endowed with a preorder
\[
x \preccurlyeq y \quad \text{if and only if} \quad \phi(x) \le \phi(y) \quad \text{for all } \phi \in \Phi,
\]
where \( \Phi \) is a non-empty set of functions \( \phi \colon X \to \mathbb{R} \).  Throughout this article, the notation \( Y \subset X \) denotes non-strict inclusion, meaning that every element of \( Y \) is also an element of \( X \), allowing for the possibility that \( Y = X \).

\smallskip
For the moment, we assume that $(X,\preccurlyeq)$ is an upper semilattice, i.e., for any two elements $x,y\in X$, the set $\{x,y\}$ has a unique supremum $x\vee y\in X$. 
In that case, the preoder $\preccurlyeq$ is a partial order.\footnote{Indeed, for $x,y\in X$ with $x \preccurlyeq y$ and $y \preccurlyeq x$, there exists a unique sumpremum $x\vee y$ satisfying $x \preccurlyeq x\vee y$ and $y \preccurlyeq x\vee y$. Since $x,y$ are upper bounds for $\{x,y\}$, we conclude that $x=y=x\vee y$.} Our focus is on functions \(\psi \colon X \to (-\infty, \infty]\) that are maxitive with respect to the order \(\preccurlyeq\), i.e., \[\psi(x \vee y) = \max\{\psi(x), \psi(y)\}\quad\mbox{for all }x, y \in X.\] Note that the supremum, and thus maxitivity, depends on the order \( \preccurlyeq \). Such maxitive functions are characterized as follows:
\begin{lemma}\label{lem:finiteMax}
Let $(X,\preccurlyeq)$ be an upper semilattice. For a function  \(\psi \colon X \to (-\infty, \infty]\), 
the following conditions are equivalent:
\begin{itemize}
    \item[(i)] $\psi$ is maxitive.
    \item[(ii)] $\psi$ is monotone\footnote{I.e., $x\preccurlyeq y$ implies $\psi(x)\le \psi(y)$.} and has upwards directed sublevel sets\footnote{I.e., if $x,y\in A_s=\{z\in X\colon \psi(z)\le s\}$ for some $s\in\R$, then there exists $z^\ast\in A_s$ with $x \preccurlyeq z^\ast$ and $y \preccurlyeq z^\ast$.}.
    \item[(iii)] $\psi$ is monotone and for every partition $\{\Phi_1,\Phi_2\}$ of $\Phi$ and $x_1,x_2\in X$ with  $\phi(x_1)\ge\phi(x_2)$ for all $\phi\in\Phi_1$ and  $\phi(x_2)\ge\phi(x_1)$ for all $\phi\in\Phi_2$, there exists $i\in \{1,2\}$ such that
    \[
\phi(y)\ge \phi(x_i)\mbox{ for all }\phi\in\Phi_i\quad\mbox{implies}\quad
    \psi(y)\ge \psi(x_1\vee x_2).\]
\end{itemize}
\end{lemma}
\begin{proof}
\({\rm (i)} \Rightarrow {\rm (ii)}\): Given $x,y\in X$ with $x \preccurlyeq y$, we have that $y=x\vee y$. 
Since $\psi$ is maxitive, we obtain that $\psi(y)=\psi(x\vee y)=\max\{\psi(x),\psi(y)\}\ge \psi(x)$, which shows that $\psi$ is monotone. Moreover, given $x,y\in A_s$ for some $s\in\R$, we fix $z^\ast=x\vee y$. By maxitivity, it follows that $\psi(z^\ast)=\psi(x)\vee \psi(y)\le s$, so that $z^\ast\in A_s$. This shows that $\psi$ has upwards directed sublevel sets.

\({\rm (ii)} \Rightarrow {\rm (i)}\): Given $x,y\in X$, by monotonicity, we have that $\psi(x\vee y)\ge \max\{\psi(x),\psi(y)\}$. Suppose by contradiction that $\psi(x\vee y)>s:=\max\{\psi(x),\psi(y)\}$.  Since $x,y\in A_s$ and $\psi$ has upwards directed sublevel sets, there exists $z^\ast\in A_s$ such that $x,y \preccurlyeq z^\ast$. Hence, $x\vee y  \preccurlyeq z^\ast$, which by monotonicity implies that $\psi(x\vee y)\le \psi(z^\ast)\le s$. This contradicts that $\psi(x\vee y)>s$.

 $\rm{(i)}\Rightarrow{\rm (iii)}$: By contradiction, assume that there exist a partition  $\{\Phi_1,\Phi_2\}$ of $\Phi$ and $x_1,x_2\in X$ with  $\phi(x_1)\ge\phi(x_2)$ for all $\phi\in\Phi_1$ and $\phi(x_2)\ge\phi(x_1)$ for all $\phi\in\Phi_2$, for which there exist $y_1,y_2\in X$ with 
 \[
 \phi(y_i)\ge \phi(x_i)\mbox{ for all }\phi\in\Phi_i\quad \mbox{and}\quad\psi(y_i)<\psi(x_1\vee x_2),
 \]
 for $i\in\{1,2\}$.
Then, for any \(\phi \in \Phi\), either \(\phi \in \Phi_1\) so that \(\phi(y_1 \vee y_2) \ge \phi(x_1) \ge \phi(x_2)\), or \(\phi \in \Phi_2\) so that \(\phi(y_1 \vee y_2) \ge \phi(x_2) \ge \phi(x_1)\), which shows that \(x_1 \vee x_2 \preccurlyeq y_1 \vee y_2\). Since \(\psi\) is maxitive, it is also monotone, which leads to the following contradiction:
\[
\psi(x_1 \vee x_2) \le \psi(y_1 \vee y_2) = \max\{\psi(y_1), \psi(y_2)\} <   \psi(x_1 \vee x_2).
\]

 ${\rm (iii)}\Rightarrow {\rm (i)}$:  For $x_1,x_2\in X$, we consider the partition $\Phi_1:=\{\phi\colon \phi(x_1)\ge \phi(x_2)\}$ and $\Phi_2:=\{\phi\colon \phi(x_1)< \phi(x_2)\}$. Then, there exists $i\in \{1,2\}$ such that
     \[
     \phi(y)\ge \phi(x_i)\mbox{ for all }\phi\in\Phi_i\quad\mbox{implies}\quad
    \psi(y)\ge \psi(x_1\vee x_2).
     \]
   By choosing \( y := x_i \) in the previous relation, we obtain that \( \psi(x_1 \vee x_2) \le \psi(x_i) \le \max\{\psi(x_1),\psi(x_2)\} \). The other inequality \( \psi(x_1 \vee x_2) \ge \max\{\psi(x_1),\psi(x_2)\} \) follows directly from the monotonicity of \( \psi \).
\end{proof}
\begin{discussion}
We note that our concept includes the classical notion of maxitivity. To see this, let \( X \) be the space of all functions \( f \colon S \to [-\infty, \infty] \) on some state space \(S\), and consider the family $\Phi:=\{\phi_s\colon s\in S\}$, where $\phi_s\colon X\to \mathbb{R}$ is given by $\phi_s(f)=\arctan(f(s))$.\footnote{We set $\arctan(\infty):=\pi/2$ and $\arctan(-\infty):=-\pi/2$.} In this case, \((f \vee g)(s) = \max\{f(s), g(s)\}\), and the concept of maxitivity corresponds to the usual notion of maxitivity studied in the literature; see e.g.~\cite{cattaneo,cooman,maslov,shilkret}. Moreover, the state space \(S\) can be identified with the space \(\Phi\), which defines the order \(\preccurlyeq\). In line with the motivation presented in the previous section, Lemma~\ref{lem:finiteMax}(iii) guarantees that losses in certain states cannot be compensated by gains in others. For a general order, this result remains valid when interpreting the elements of \(\Phi\) as the relevant states.
\end{discussion}

\smallskip
Representation results in terms of a weighted or a penalized worst-case evaluation for maxitive functions generally require stronger forms of maxitivity; see  e.g.~\cite{cattaneo,cooman}. For instance, \(\psi\) is called completely maxitive if 
\[
\psi\big(\vee_{i \in I} x_i\big) = \sup_{i \in I} \psi\left(x_i\right)
\]
for every family \((x_i)_{i \in I}\subset X\). 
However, for such a notion to make sense, the supremum \(\vee_{i \in I} x_i\) must exist for arbitrary families. Under additional regularity assumptions, complete maxitivity is already ensured by (finite) maxitivity  \cite{arslanov,kupper,kupper2,murofushi}. 
For our purposes, and to achieve the most elegant mathematical representation results, we will equip the `state space' \(\Phi\) with a Polish topology and assume that the mapping \(\Phi \to \mathbb{R}\), \(\phi \mapsto \phi(x)\), is lower semicontinuous for all $x\in X$; see Section~\ref{sec:mainResult}.
In this context, we work with the following version of maxitivity:
\begin{definition}\label{def:countmax}
Suppose that  $(X,\preccurlyeq)$ is countably complete\footnote{We say that \((X, \preccurlyeq)\) is countably complete if every countable subset\footnote{ Recall taht we use the convention that $A\subset B$ means that every element of $A$ is an element of $B$.} \(Y \subset X\) that is bounded from above has a unique supremum \(\vee Y\) in \(X\). Recall that \(Y\) is bounded from above, if there exists $y^\ast\in X$ such that $y\preccurlyeq y^\ast$ for all $y\in Y$.}. Then, a function $\psi\colon X\to(-\infty,\infty]$ is \emph{bounded countably maxitive} if for every countable subset $Y\subset X$, we have 
\begin{equation}\label{eq:YMaxitive}
\sup_{y\in Y}\psi(y)
=
\begin{cases}
\psi(\vee Y), & \mbox{ if }Y\mbox{ is bounded from above,}\\
\infty, & \mbox{ else.}
\end{cases}
\end{equation}
\end{definition}
As in the finite case described in Lemma~\ref{lem:finiteMax}, we obtain the following characterization of bounded countable maxitivity:
\begin{lemma}\label{relation:maxitive}
Suppose that \((X, \preccurlyeq)\) is countably complete. Then, a function \(\psi \colon X \to (-\infty, \infty]\) is bounded countably maxitive if and only if it is monotone and has countably upwards directed sublevel sets\footnote{A function \(\psi\) has countably upwards directed sublevel sets if for every \(s \in \mathbb{R}\), the level set \(A_s := \{x \in X : \psi(x) \le s\}\) is countably upwards directed, i.e., for every countable subset  \(Y \subset A_s\), there exists \(y^\ast \in A_s\) with \(y \preccurlyeq y^\ast\) for all \(y \in Y\).}.
\end{lemma}
\begin{proof}
First, suppose that $\psi$ is bounded countably maxitive. Then, for $x,y\in X$ with $x\preccurlyeq y$, it follows from bounded countably maxitivity with $Y:=\{x,y\}$ that $\psi(x)\le \psi(x\vee y)=\psi(y)$. Moreover, for any countable subset $Y\subset A_s$, it holds
$\sup_{y\in Y} \psi(y)\le s$ and bounded countable maxitivity implies that $Y$ is bounded from above with $\vee Y\in A_s$.

Second, if $\psi$ is monotone, then the right hand side of \eqref{eq:YMaxitive} is greater than the left hand side. If  $\psi$ has countably upwards directed sublevel sets, then either $\sup_{y\in Y}\psi(y)=\infty$ or $\sup_{y\in Y}\psi(y)=s<\infty$ in which case $\vee Y\in A_s$ so that $\psi(\vee Y)\le s$, i.e., the right hand side of \eqref{eq:YMaxitive} is smaller than the left hand side. 
\end{proof}

\smallskip
Our representation results are based on duality arguments. Hereby, maxitivity translates dually into the following stability property.

\begin{definition}\label{def:stability}
    A function $\alpha\colon {\Phi}\to [-\infty,\infty]$ is  \emph{countably stable} if for every countable subset $Y\subset X$ with  $\sup_{y\in Y}\phi(y)\le \alpha(\phi)$ for all $\phi\in\Phi$,
there exists $y^\ast\in X$ with $y\preccurlyeq y^\ast$ for all $y\in Y$ and $\phi(y^\ast)\le \alpha(\phi)$ for all $\phi\in\Phi$.
\end{definition}

At the level of sublevel sets, we have the following characterization.

\begin{proposition}\label{prop:duality}
 Suppose that 
\[
A = \{x \in X \colon \phi(x) \le \alpha(\phi) \text{ for all } \phi \in \Phi\},
\]
for some \(\alpha \colon \Phi \to [-\infty, \infty]\). Then, we have  
\[
A = \{x \in X \colon \phi(x) \le \alpha_{\min}(\phi) \text{ for all } \phi \in \Phi\},
\]
where \(\alpha_{\min}(\phi) := \sup_{x \in A} \phi(x)\) for all \(\phi \in \Phi\).

In this case, \(A\) is countably upwards directed if and only if \(\alpha\) is countably stable if and only if \(\alpha_{\min}\) is countably stable.
\end{proposition}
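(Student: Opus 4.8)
The plan is to prove the three assertions in turn, beginning with the representation identity and then deriving all the equivalences from a single translation between set-theoretic and functional language.

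First I would establish that $A$ is equally described by $\alpha_{\min}$. Since each $x\in A$ satisfies $\phi(x)\le\alpha(\phi)$ for every $\phi$, taking the supremum over $x\in A$ gives $\alpha_{\min}(\phi)=\sup_{x\in A}\phi(x)\le\alpha(\phi)$ for all $\phi\in\Phi$; hence the sublevel description built from $\alpha_{\min}$ is contained in $A$. Conversely, every $x\in A$ satisfies $\phi(x)\le\sup_{x'\in A}\phi(x')=\alpha_{\min}(\phi)$ for all $\phi$, so $A$ is contained in that description. This yields the claimed identity $A=\{x\in X:\phi(x)\le\alpha_{\min}(\phi)\text{ for all }\phi\in\Phi\}$.

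The key observation for the equivalences is a dictionary between membership in $A$ and the inequalities appearing in $\mathcal{Y}$-stability. For any $Y\subset X$ one has $Y\subset A$ if and only if $\sup_{y\in Y}\phi(y)\le\alpha(\phi)$ for all $\phi\in\Phi$, since the supremum over $y$ is dominated by $\alpha(\phi)$ precisely when each individual value $\phi(y)$ is. Likewise, for a single point $y^\ast\in X$ one has $y^\ast\in A$ if and only if $\phi(y^\ast)\le\alpha(\phi)$ for all $\phi\in\Phi$. Substituting this dictionary into the definition of $\mathcal{Y}$-stability of $\alpha$ turns it verbatim into the statement that for every $Y\in\mathcal{Y}$ with $Y\subset A$ there is some $y^\ast\in A$ with $y\preccurlyeq y^\ast$ for all $y\in Y$, which is exactly the $\mathcal{Y}$-directedness of $A$. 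This gives the equivalence that $A$ is $\mathcal{Y}$-directed if and only if $\alpha$ is $\mathcal{Y}$-stable with no further work; I would only note that the point $y^\ast$ in the definition of stability a priori ranges over $X$, but its defining inequalities are exactly the condition $y^\ast\in A$, so no extra hypothesis is required.

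Finally, since the first part shows that $A$ admits the very same sublevel description with $\alpha_{\min}$ in place of $\alpha$, the identical dictionary applies with $\alpha$ replaced by $\alpha_{\min}$, yielding that $A$ is $\mathcal{Y}$-directed if and only if $\alpha_{\min}$ is $\mathcal{Y}$-stable. Chaining the two equivalences produces the full chain asserted in the statement. I do not anticipate a genuine obstacle here: the argument is essentially an unwinding of definitions, and the only point demanding a little care is the interchange of the supremum over $Y$ with the universal quantifier over $\Phi$, which is precisely what makes the set-language and the function-language formulations coincide.
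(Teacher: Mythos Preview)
Your proposal is correct and follows essentially the same approach as the paper's proof: both establish $\alpha_{\min}\le\alpha$ to get the representation identity, and both translate between ``$Y\subset A$'' and ``$\sup_{y\in Y}\phi(y)\le\alpha(\phi)$ for all $\phi$'' (and between ``$y^\ast\in A$'' and ``$\phi(y^\ast)\le\alpha(\phi)$ for all $\phi$'') to pass between $\mathcal{Y}$-directedness and $\mathcal{Y}$-stability. The only cosmetic difference is that the paper writes out the two implications separately while you package them as a single dictionary; the content is identical.
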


\begin{proof}
First, since $\phi(x)\le\alpha(\phi)$ for all $x\in A$ and $\phi\in\Phi$, it holds that $\alpha_{\min}(\phi)\le \alpha(\phi)$ for all $\phi\in\Phi$, which shows that $\{x\in X\colon \phi(x)\le \alpha_{\min}(\phi)\mbox{ for all }\phi\in\Phi\}\subset A$. Conversely, if $x\in A$, then $\phi(x)\le\alpha_{\min}(\phi)$ for all $\phi\in\Phi$, which shows that $A\subset\{x\in X\colon \phi(x)\le \alpha_{\min}(\phi)\mbox{ for all }\phi\in\Phi\}$.

Second, if \(A\) is countably upwards directed and the subset \(Y \subset X\) is countable with \(\sup_{y \in Y} \phi(y) \le \alpha(\phi)\) for all \(\phi \in \Phi\), then \(Y \subset A\), so there exists \(y^* \in A\) with \(y \preccurlyeq y^*\) for all \(y \in Y\), and therefore \(\phi(y^*) \le \alpha(\phi)\) for all \(\phi \in \Phi\), i.e., \(\alpha\) is countably stable. Conversely, if \(\alpha\) is countably stable and \(Y \subset X\) is countable such that \(Y \subset A\), then \(\sup_{y \in Y} \phi(y) \le \alpha(\phi)\) for all \(\phi \in \Phi\), and countable stability implies that \(Y\) is bounded from above by some \(y^* \in A\), i.e., \(A\) is countably upwards directed. The argument for \(\alpha_{\min}\) is analogous.
\end{proof}

\begin{remark}
In general, for an arbitrary countably upwards directed subset \( A \subset X \), it does not follow that \( \alpha_{\min}(\phi) := \sup_{x \in A} \phi(x) \) is countably stable, even if \( X \) is countably complete.

To give a counterexample, let \( X \) be the set of all functions \( f \colon \mathbb{R} \to [0,1] \) such that there exists a bounded interval \( I \subset \mathbb{R} \) for which \( \{x \in \mathbb{R} \setminus I \colon f(x) > 0 \} \) is at most countable. We consider the family \( \Phi := \{ \phi_x \colon x \in \mathbb{R} \} \), where \( \phi_x(f) = f(x) \). The ordered set \( (X, \preccurlyeq) \) is countably complete. 
Now, let \( A \) be the set of all functions \( f \in X \) such that \( \{x \in \mathbb{R} \colon f(x) > 0 \} \) is at most countable. Then, the set \( A \) is countably upwards directed, and 
\[
\alpha_{\min}(\phi_x) = \sup_{f \in A} f(x) = 1 \quad \text{for all } x \in \mathbb{R}.
\]
To show that \( \alpha_{\min} \) is not countably stable, consider for every \( n \in \mathbb{N} \) the indicator function \( 1_{[-n,n]} \) which assumes the value \( 1 \) on \( [-n,n] \) and \( 0 \) on \( [-n,n]^c \). Then, it holds 
\[
\sup_{n \in \mathbb{N}} 1_{[-n,n]}(x) \le \alpha_{\min}(\phi_x) \quad \text{for all } x \in \mathbb{R},
\]
but there is no function \( f \in X \) with \( 1_{[-n,n]}(x) \le f(x) \) for all \( x \in \mathbb{R} \).
\end{remark}

\section{Main result} \label{sec:mainResult}

In this section, we present our representation results for maxitive functions with respect to general orders. Since we do not assume any form of completeness on the order, we consider monotone functions with countably upwards directed sublevel sets.  In contrast to bounded countable maxitivity, these concepts do not require that the order is countably complete, allowing us to derive more general statements without relying on the assumption of countable completeness.

Throughout this section, we assume that \( X \) is a non-empty set, and \( \Phi \) is a non-empty set of functions \( \phi \colon X \to \mathbb{R} \) which induces the preorder 
\[
x \preccurlyeq y \quad \text{if and only if} \quad \phi(x) \le \phi(y) \text{ for all } \phi \in \Phi.
\] 
In addition, we assume that the set \( \Phi \) is endowed with a topology such that \( \Phi \) is a Polish space, and the functions \( f_x \colon \Phi \to \mathbb{R} \) defined by \( f_x(\phi) := \phi(x) \) are lower semicontinuous\footnote{I.e., $\{\phi\in\Phi\colon f_x(\phi)\le s\}$ is closed for all $s\in\R$.} for all \( x \in X \).

The following is the main result of this article.
\begin{theorem}\label{thm:main}
Let \( \psi \colon X \to (-\infty, \infty] \) be a function with sublevel sets \( (A_s)_{s \in \mathbb{R}} \) given by
\[
A_s := \{x \in X \colon \psi(x) \le s\}.
\]
Then, the following statements are equivalent:
\begin{itemize}
\item[(i)] $\psi$ is monotone and has countably upwards directed sublevel sets.
\item[(ii)] There exists a function $\alpha\colon \mathbb{R}\times \Phi\to[-\infty,\infty]$ which is non-decreasing in the first argument and countably stable in the second argument such that 
\begin{equation}\label{eq:mainRep}
\psi(x)=\inf\left\{s \in\R \colon \underset{\phi\in\Phi}\sup\{\phi(x)-\alpha(s,\phi)\} \le 0\right\}\quad\mbox{for all }x\in X.
\end{equation}
The infimum is attained for all $x\in X$ with $\psi(x)<\infty$.

\item[(iii)]  The function $\alpha_{\min}\colon\mathbb{R}\times\Phi\to [-\infty,\infty]$, $\alpha_{\min}(s,\phi):=\sup_{x\in A_s} \phi(x)$ is countably stable in the second argument and
\begin{equation}\label{eq:mainRepBis}
\psi(x)=\inf\left\{s \in \R\colon \underset{\phi\in\Phi}\sup\{\phi(x)-\alpha_{\min}(s,\phi)\} \le 0\right\}\quad\mbox{for all }x\in X.
\end{equation}
The infimum is attained for all $x\in X$ with $\psi(x)<\infty$.

\item[(iv)]  The function $\alpha^+_{\min}\colon\mathbb{R}\times\Phi\to [-\infty,\infty]$, $\alpha_{\min}^+(s,\phi):=\inf_{s<t}\alpha_{\min}(t,\phi)$ is countably stable in the second argument and
\begin{equation}\label{eq:mainRepII}
\psi(x)=\inf\left\{s \in \R\colon \underset{\phi\in\Phi}\sup\{\phi(x)-\alpha^+_{\min}(s,\phi)\} \le 0\right\}\quad\mbox{for all }x\in X.
\end{equation}
The infimum is attained for all $x\in X$ with $\psi(x)<\infty$.

\item[(v)] The function $\alpha_{\min}^+$ is countably stable in the second argument and 
\begin{equation}\label{eq:mainRepIII}
\psi(x)=\sup_{\phi\in \Phi} G(\phi(x),\phi)\quad\mbox{for all }x\in X,
\end{equation}
where $G\colon\mathbb{R}\times\Phi\to [-\infty,\infty]$, $G(t,\phi):=\inf\{s\in\mathbb{R}\colon t\le \alpha^+_{\min}(s,\phi)\}$.
\end{itemize}
\end{theorem}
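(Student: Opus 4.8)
The plan is to prove all five statements equivalent by establishing the web of implications (iii)$\Rightarrow$(ii)$\Rightarrow$(i)$\Rightarrow$(iii), together with (iii)$\Rightarrow$(iv), the fact that (iv) is a special case of (ii), and (iv)$\Leftrightarrow$(v); every one of these except the single arrow (i)$\Rightarrow$(iii) is bookkeeping resting on Proposition \ref{prop:duality}. First I would dispatch the routine directions. For (iii)$\Rightarrow$(ii) there is nothing to do: since $A_s$ increases in $s$, the function $\alpha_{\min}$ is non-decreasing in its first argument and thus already witnesses (ii). For (ii)$\Rightarrow$(i) (and identically (iv)$\Rightarrow$(i), because $\alpha^+_{\min}$ is again non-decreasing in $s$ and countably stable in $\phi$), I set $B_s:=\{x:\phi(x)\le\alpha(s,\phi)\text{ for all }\phi\}$. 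Monotonicity of $\psi$ is read off from the implication $x\preccurlyeq y,\ y\in B_s\Rightarrow x\in B_s$; using that the infimum in \eqref{eq:mainRep} is attained and that $\alpha(\cdot,\phi)$ is non-decreasing one checks $A_s=B_s$, and since $B_s$ has exactly the form required by Proposition \ref{prop:duality} with the countably stable $\alpha(s,\cdot)$, that proposition yields that $A_s=B_s$ is countably directed.

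The whole weight of the theorem sits in (i)$\Rightarrow$(iii). Fix $s$; the set $A_s$ is countably directed by hypothesis and downward closed because $\psi$ is monotone. The step I would isolate as a lemma is: every countably directed $A\subseteq X$ has a greatest element, and this is where the Polish/lower semicontinuity hypotheses are indispensable. Writing $\alpha_{\min}(\phi):=\sup_{a\in A}\phi(a)$, I would consider the strict hypographs $O_a:=\{(\phi,r)\in\Phi\times\R:r<\phi(a)\}$, which are open precisely because each $f_a\colon\phi\mapsto\phi(a)$ is lower semicontinuous. Their union is the open set $O:=\{(\phi,r):r<\alpha_{\min}(\phi)\}$ inside the second-countable (hence Lindel\"of) space $\Phi\times\R$, so a countable subcover $O=\bigcup_n O_{a_n}$ exists and gives $\alpha_{\min}=\sup_n f_{a_n}$, a countable supremum. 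Countable directedness then furnishes $a^\ast\in A$ dominating $\{a_n\}$, whence $f_{a^\ast}\ge\sup_n f_{a_n}=\alpha_{\min}$, while $a^\ast\in A$ forces $f_{a^\ast}\le\alpha_{\min}$; thus $f_{a^\ast}=\alpha_{\min}$ and $a\preccurlyeq a^\ast$ for all $a\in A$. Applied to $A=A_s$ this produces a greatest element $a^\ast_s$ with $\phi(a^\ast_s)=\alpha_{\min}(s,\phi)$. This single element simultaneously witnesses countable stability of $\alpha_{\min}(s,\cdot)$ and shows $\{x:\phi(x)\le\alpha_{\min}(s,\phi)\ \forall\phi\}=\{x:x\preccurlyeq a^\ast_s\}=A_s$ (here downward closedness is used), so that \eqref{eq:mainRepBis} and the attainment of the infimum follow from $\psi(x)=\inf\{s:x\in A_s\}$.

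It then remains to handle the two reformulations. For (iii)$\Rightarrow$(iv) I would observe that $\{x:\phi(x)\le\alpha^+_{\min}(s,\phi)\ \forall\phi\}=\bigcap_{t>s}\{x:\phi(x)\le\alpha_{\min}(t,\phi)\ \forall\phi\}=\bigcap_{t>s}A_t=A_s$, so the (iv)-set again equals $A_s$, its support function is again $\alpha_{\min}(s,\cdot)$, and Proposition \ref{prop:duality} transfers countable stability from $\alpha_{\min}(s,\cdot)$ to $\alpha^+_{\min}(s,\cdot)$ while delivering the representation \eqref{eq:mainRepII}. The equivalence (iv)$\Leftrightarrow$(v) is a generalized-inverse computation: abbreviating by $P(s,\phi)$ the relation $\phi(x)\le\alpha^+_{\min}(s,\phi)$, right-continuity of $s\mapsto\alpha^+_{\min}(s,\phi)$ makes the infimum $G(\phi(x),\phi)=\inf\{s:P(s,\phi)\}$ attained, so that $\{s:P(s,\phi)\ \forall\phi\}=\{s:s\ge G(\phi(x),\phi)\ \forall\phi\}=[\sup_\phi G(\phi(x),\phi),\infty)$, and taking infima turns \eqref{eq:mainRepII} into \eqref{eq:mainRepIII}; this is exactly why one passes to the right-continuous version $\alpha^+_{\min}$.

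I expect the main obstacle to be precisely the key lemma of the second paragraph, namely the extraction of a countable cofinal family and the production of a greatest element. It is the only place the topological hypotheses enter and the only genuinely non-formal input; the Remark already demonstrates that the conclusion collapses as soon as lower semicontinuity is dropped, which is reassurance that any correct proof must route through exactly this combination of lower semicontinuity, second countability, and countable directedness.
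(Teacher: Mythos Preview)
Your proof is correct and takes a genuinely different route from the paper for the key implication. The paper proves $(i)\Rightarrow(ii)$ by constructing an auxiliary functional $F(s,f):=\inf_{x\in A_s}\sup_{\phi}\{f(\phi)-\phi(x)\}$ on the space $L(\Phi)$ of lower semicontinuous functions, verifying that $F(s,\cdot)$ is translation-invariant, monotone and countably maxitive, and then invoking a representation theorem for maxitive functionals on $L(\Phi)$ (Corollary~\ref{cor:repMax2}, which in turn rests on Delbaen's dual representation Theorem~\ref{lem:Delbaen}) to obtain $F(s,f)=\sup_\phi\{f(\phi)-\alpha(s,\phi)\}$. Your approach bypasses all of this external machinery: the Lindel\"of argument on the open strict hypographs $O_a\subset\Phi\times\R$ shows directly that every nonempty countably directed sublevel set $A_s$ has a \emph{greatest element} $a_s^\ast$, from which both countable stability of $\alpha_{\min}(s,\cdot)$ and the identity $A_s=\{x:x\preccurlyeq a_s^\ast\}=\{x:\phi(x)\le\alpha_{\min}(s,\phi)\ \forall\phi\}$ follow at once. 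This is strictly more elementary and actually yields a sharper structural fact (existence of a maximum in each sublevel set) that the paper does not isolate; it also makes the role of the Polish/lower-semicontinuity hypotheses completely transparent, whereas in the paper they are hidden inside the appendix results. The paper's detour has the compensating merit of connecting the theorem to the established theory of maxitive risk measures on $C_b$, but your argument is self-contained. The remaining implications---the bookkeeping via Proposition~\ref{prop:duality}, the passage to $\alpha_{\min}^+$ via $\bigcap_{t>s}A_t=A_s$, and the generalized-inverse computation for $(iv)\Leftrightarrow(v)$---are handled essentially the same way in both proofs.
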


\begin{proof}
Let $L(\Phi)$ denote the set of all lower semicontinuous functions $f\colon \Phi\to \mathbb{R}$ endowed with the pointwise order. Recall that $f_x\colon \Phi\to\mathbb{R}$ is defined by $f_x(\phi):=\phi(x)$ for all $x\in X$. Moreover, let $A_s:=\{x\in X\colon \psi(x)\le s\}$ for all $s\in \mathbb{R}$.

 $(i)\Rightarrow(ii)$: We can assume w.l.o.g. that  $\inf_{\phi\in\Phi}\phi(x)>-\infty$ for all $x\in X$.\footnote{Otherwise, we prove (ii) for the set $\Phi^\prime:=\{e^{\phi}\colon \phi\in \Phi\}$, which also induces the preorder $\preccurlyeq$. 
 In fact, if (ii) holds for the set $\Phi^\prime$ and some function $\alpha^\prime\colon \R\times\Phi^\prime\to [-\infty,\infty]$, then it follows by inspection that (ii) is also satisfied by the original family $\Phi$ and the function $\alpha\colon \R\times\Phi\to [-\infty,\infty]$ defined by $\alpha(s,\phi):=\log(0\vee \alpha^\prime(s,e^{\phi}))$.}
  For $I:=\{s\in\mathbb{R}\colon A_s\neq\emptyset\}$, we define
 \begin{equation}\label{eq:defF}
 F(s,f)
 :=\inf_{x\in A_s}\sup_{\phi\in\Phi}\{f(\phi)-\phi(x)\}\quad\mbox{for all } s\in I\mbox{ and }f\in L(\Phi).
 \end{equation}
 Then, the mapping $F\colon I\times L(\Phi)\to [-\infty,\infty]$ is well-defined and satisfies the following properties. For every $f,g\in L(\Phi)$ and $s,t\in I$, it holds
 \begin{itemize}
 \item[(1)] $F(s,f) \le F(t,f)$ for all $t\le s$,
 \item[(2)] $F(s,f)\le F(s,g)$ whenever $f\le g$,
 \item[(3)] $F(s,f+c)=F(s,f)+c$ for all $c\in\mathbb{R}$,
 \item[(4)] $F(s,\sup_{n\in\mathbb{N}} f_n)=\sup_{n\in \mathbb{N}}F(s,f_n)$ for every sequence $(f_n)_{n\in\mathbb{N}}\subset L(\Phi)$ with $\sup_{n\in\N} f_n\in L(\Phi)$,
 \item[(5)] $F(s,f)\in (-\infty,\infty]$ and $F(s,0)\in\mathbb{R}$, 
  \item[(6)] $A_s=\{x\in X\colon F(s,f_x)\le 0\}$,
  \item[(7)] $\psi(x)=\inf\{s\in I\colon  F(s,f_x)\le 0\}$ for all $x\in X$, and the infimum is attained whenever $\psi(x)<\infty$. 
 \end{itemize}
The properties (1)--(3) follow directly from the definition of $F$.

To show (4), let $(f_n)_{n\in\mathbb{N}}\subset L(\Phi)$ with $f:=\sup_{n\in\N} f_n\in L(\Phi)$. Then, it follows from condition (2) that $\sup_{n\in\N} F(s,f_n)\le F(s,f)$.
By contradiction, we assume there exists $c\in\mathbb{R}$ with 
\[
\sup_{n\in\N} F(s,f_n)<c< F(s,f).
\]
Then for every $n\in\N$ there exists $x_n\in A_s$ with
$f_n(\phi)-\phi(x_n)<c$ for all $\phi\in\Phi$. Since $A_s$ is countably upwards directed, there exists $x^\ast\in A_s$ with $x_1,x_2,\cdots \preccurlyeq x^\ast$. We obtain that
$f_n(\phi)-\phi(x^\ast)<c$ for all $\phi\in\Phi$ and $n\in\N$.
Taking the supremum first over all $n\in\N$, and second over all $\phi\in\Phi$, we obtain that
\[
\sup_{\phi\in\Phi}\{f(\phi)-\phi(x^\ast)\}\le c<F(s,f)=\inf_{x\in A_s}\sup_{\phi\in\Phi}\{f(\phi)-\phi(x)\},
\]
which is a contradiction to $x^\ast\in A_s$. 

To verify (5), suppose by contradiction that $F(s,f)=-\infty$.
Then, for $\phi\in\Phi$ there exists $(x_n)_{n\in\mathbb{N}}\subset A_s$ with $f(\phi)-\phi(x_n)\le -n$ for all $n\in\mathbb{N}$. Since $A_s$ is countably upwards directed, there is $x^\ast\in A_s$ with $x_1,x_2,\cdots \preccurlyeq x^\ast$, and therefore  
 \[
n+f(\phi)\le \phi(x^\ast)\quad\mbox{ for all }n\in\mathbb{N}.
\] 
Letting $n\to\infty$ yields $\phi(x^\ast)=\infty$ in contrast to the assumption that $\phi$ is real-valued. This shows that $F(s,f)\in(-\infty,\infty]$. In particular, we obtain that for every $x\in A_s$,
\[
-\infty<F(s,0)=\inf_{y\in A_s}\sup_{\phi\in\Phi}\{-\phi(y)\}\le -\inf_{\phi\in\Phi}\phi(x)<\infty,
\]
where the last inequality follows from the boundedness assumption on the family $\Phi$, i.e., $F(s,0)\in\R$. 

Next, we show (6). If $x\in A_s$, then
\begin{align*}
F(s,f_x)&=\inf_{y\in A_{s}}\sup_{\phi\in\Phi}\{\phi(x)-\phi(y)\}\le \sup_{\phi\in\Phi}\{\phi(x)-\phi(x)\}=0.
\end{align*}   
If $x\in X$ satisfies $F(s,f_x)\le 0$, then there exists $(x_n)_{n\in\mathbb{N}}\subset A_s$ with  
$\phi(x)-\phi(x_n)<1/n$ for all $\phi\in\Phi$.
Since $A_s$ is countably upwards directed, there exists $x^\ast\in A_s$ with $x_1,x_2,\cdots \preccurlyeq x^\ast$, and therefore $\phi(x)-\phi(x^\ast)\le 0$ for all $\phi\in\Phi$,  which shows $x\preccurlyeq x^\ast$. Since $\psi$ is monotone, we conclude that 
$\psi(x)\le \psi(x^\ast)\le s$, and therefore $x\in A_s$. 

To show (7), we notice that condition (6) implies that for every $x\in X$,
\[
\psi(x)=\inf\{s\in I\colon x\in A_s\}=\inf\{s\in I\colon F(s,f_x)\le 0\},
\]
and the infimum is attained whenever $\psi(x)<\infty$.
Hence, we have verified the conditions (1)--(7).

Due to conditions (3)--(5), for each $s\in I$, the function 
$F(s,\cdot)\colon L(\Phi)\to (-\infty,\infty]$ satisfies the assumptions of Corollary~\ref{cor:repMax2}, and we obtain the representation
\begin{equation}\label{eq:repMax}
F(s,f)=\underset{\phi\in\Phi}\sup\{f(\phi)-\alpha(s,\phi)\}\quad\mbox{ for all }s\in I,\:f\in L(\Phi),
\end{equation} 
where $\alpha(s,\phi):=
\sup_{f\in C_b(\Phi)}\{f(\phi)-F(s,f)\}$ for all $s\in I$ and $\phi\in \Phi$. As a result of condition (7), we obtain that 
\begin{equation}\label{eq:repMax2}
\psi(x)=\inf\Big\{s\in \mathbb{R} \colon \sup_{\phi\in\Phi}\{\phi(x)-\alpha(s,\phi)\} \le 0\Big\}\quad\mbox{for all }x\in X,
\end{equation} 
where we set $\alpha(s,\phi):=-\infty$ whenever $s\notin I$.
Moreover, the infimum is attained whenever $\psi(x)<\infty$.
Due to condition (1), the mapping $\alpha$ is non-decreasing in the first argument. For $s\in\mathbb{R}$, the representation \eqref{eq:repMax2} implies that $x\in A_s$ if and only if $\phi(x)\le \alpha(s,\phi)$ for all $\phi\in\Phi$. 
Since $A_s$ is countably upwards directed, it follows from Proposition~\ref{prop:duality} that $\alpha$ is countably stable in the second argument. 

$(ii)\Rightarrow(iii)$: Fix $s\in \mathbb{R}$. The representation~\eqref{eq:mainRep} implies that $x\in A_s$ if and only if $\sup_{\phi\in\Phi}\{\phi(x)-\alpha(s,\phi)\} \le 0$. Since $\alpha$ is countably stable in the second argument, it follows from Proposition~\ref{prop:duality} that $A_s$ is countably upwards directed.
Moreover, Proposition~\ref{prop:duality} implies that $x\in A_s$ if and only if $\sup_{\phi\in\Phi}\{\phi(x)-\alpha_{\min}(s,\phi)\} \le 0$, where 
$\alpha_{\min}(s,\phi):=\sup_{x\in A_s} \phi(x)$, and $\alpha_{\min}$ is countably stable in the second argument.

$(iii)\Rightarrow(iv)$: Fix $s\in\mathbb{R}$. It follows from (iii) together with Proposition~\ref{prop:duality} that $A_s$ is countably directed and $x\in A_s$ if and only if $\sup_{\phi\in\Phi}\{\phi(x)-\alpha_{\min}(s,\phi)\} \le 0$. Hence, $x\in X$ satisfies 
$\sup_{\phi\in\Phi}\{\phi(x)-\alpha_{\min}(s,\phi)\} \le 0$, if and only if $x\in A_t$ for all $t>s$, if and only if $\sup_{t>s}\sup_{\phi\in\Phi}\{\phi(x)-\alpha_{\min}(t,\phi)\} \le 0$, if and only if $\sup_{\phi\in\Phi}\{\phi(x)-\alpha^+_{\min}(t,\phi)\} \le 0$. In particular, it follows from Proposition~\ref{prop:duality} that $\alpha^+$ is countable stable in the second argument.

$(iv)\Rightarrow(v)$: Since $\alpha_{\min}^+$ is right-continuous and non-decreasing in the first argument, it holds
\[
\alpha_\phi^{-1}([t,\infty))=\left[G(t,\phi),\infty\right)\quad\mbox{for all }t\in\mathbb{R}\mbox{ and }\phi\in\Phi,
\]
where $\alpha_\phi^{-1}([t,\infty)):=\{s\in\mathbb{R}\colon \alpha_{\min}^+(s,\phi)\in[t,\infty)\}$ is the preimage of $[t,\infty)$ under $\alpha_{\min}^+$. Fix $x\in X$. 
Suppose first that $\psi(x)<\infty$. 
It follows from the representation \eqref{eq:mainRepII} that $s\in[\psi(x),\infty)$
if and only if $\sup_{\phi\in\Phi} \{\phi(x)-\alpha_{\min}^+(s,\phi)\}\le 0$, if and only if $\phi(x)\le \alpha_{\min}^+(s,\phi)$ for all $\phi\in\Phi$, if and only if $s\in \alpha_\phi^{-1}([\phi(x),\infty))$ for all $\phi\in\Phi$. Hence,
$$
[\psi(x),\infty)=\bigcap_{\phi\in\Phi}\alpha_\phi^{-1}([\phi(x),\infty))=\bigcap_{\phi\in\Phi}\big[G(\phi(x),\phi),\infty\big).
$$
This shows $\psi(x)=\sup_{\phi\in\Phi} G(\phi(x),\phi)$ for all $x\in X$.
If $\psi(x)=\infty$, then the representation \eqref{eq:mainRepII} implies that for every $n\in\N$, there is $\phi_n\in \Phi$ such that $n\notin \alpha_\phi^{-1}([\phi_n(x),\infty))$.  
Hence, for every $n\in \N$, 
$$
n\notin \bigcap_{\phi\in\Phi}\alpha_\phi^{-1}([\phi(x),\infty))=\bigcap_{\phi\in\Phi}\big[G(\phi(x),\phi),\infty\big).
$$
This implies that $\psi(x)=\sup_{\phi\in\Phi} G(\phi(x),\phi)=\infty$.  

$(v)\Rightarrow(i)$ Since $\alpha^+_{\min}$ is countably stable in the second argument, we argue as in the implication from (iii) to (iv) that the level sets of $\psi$ are countably upwrads directed. Since $G$ is non-decreasing in the first argument, the function $\psi$ is monotone. 
\end{proof}

\begin{remark}
A modification of the previous result also leads to the representation of bounded completely maxitive functions\footnote{Defined similarly to Definition~\ref{def:countmax}, but for arbitrary subsets rather than countable subsets.}.  Indeed, Theorem~\ref{thm:main} remains valid if we replace countably upwards directed sublevel sets with completely upwards directed sublevel sets\footnote{I.e., for every \(s \in \mathbb{R}\), there exists \(y^\ast \in A_s\) with \(y \preccurlyeq y^\ast\) for all \(y \in A_s\).} and countable stability with complete stability\footnote{A function $\alpha\colon {\Phi}\to [-\infty,\infty]$ is  \emph{completely stable} if for every subset $Y\subset X$ with  $\sup_{y\in Y}\phi(y)\le \alpha(\phi)$ for all $\phi\in\Phi$,
there exists $y^\ast\in X$ with $y\preccurlyeq y^\ast$ for all $y\in Y$ and $\phi(y^\ast)\le \alpha(\phi)$ for all $\phi\in\Phi$.}.
The proof follows with the same arguments, subject to the following modifications. In the implication $(i)\Rightarrow(ii)$, we do not rely on Corollary~\ref{cor:repMax2}, but rather extend the definition \eqref{eq:defF} of $F(s,f)$ for all $s\in I$ and $f\in [-\infty,\infty)^\Phi$, where 
  $[-\infty,\infty)^\Phi$ denotes the set of all functions from $\Phi$ to $[-\infty,\infty)$. Direct verification shows that $F$ satisfies the conditions (1)--(3) on $I\times [-\infty,\infty)^\Phi$ and $F(s,\sup_{j\in J}f_j)=\sup_{j\in J} F(s,f_j)$ for every family $(f_j)_{j\in J}$ in $[-\infty,\infty)^\Phi$. 
  From the latter property, one obtains the representation~\eqref{eq:repMax} for all $s\in I$ and $f\in [-\infty,\infty)^\Phi$ for a suitable function $\alpha\colon I\times \Phi\to (-\infty,\infty]$. 
  From there, the argumentation is similar as in the proof of $(i)\Rightarrow(ii)$ in Theorem~\ref{thm:main}. 
\end{remark}

\begin{remark}
In line with \eqref{eq:rep pw}, \eqref{eq:rep st}, and \eqref{eq:rep general}, the function \(\alpha\) in \eqref{eq:mainRep} can be interpreted as a penalty function. 
In the present setting, there may exist different penalties \(\alpha\) for which the representation \eqref{eq:mainRep} holds. 

We note that \(\alpha_{\min}\) is the smallest penalty \(\alpha\) for which the representation \eqref{eq:mainRep} is valid. Indeed, if \eqref{eq:mainRep} holds for a penalty \(\alpha\), then for any fixed \(\phi \in \Phi\) and \(s \in \mathbb{R}\), it follows that \(\phi(x) \leq \alpha(s, \phi)\) for all \(x \in A_s\). Taking the supremum over all \(x \in A_s\), we obtain that \(\alpha_{\min}(s, \phi) = \sup_{x \in A_s} \phi(x) \leq \alpha(s, \phi)\). Furthermore, by construction, \(\alpha_{\min}^+\) is the smallest penalty for which the representation \eqref{eq:mainRep} holds and is also right-continuous in the first argument. Note that the right continuity of \(\alpha_{\min}^+\) is necessary in the implication \((iv)\Rightarrow(v)\) in Theorem \ref{thm:main}.
\end{remark}

For the remainder of this section, \( X \) is endowed with an operation \( + \colon X \times \mathbb{R} \to X \). A function \( \psi \colon X \to [-\infty, \infty] \) satisfies the \emph{translation property} if \( \psi(x + c) = \psi(x) + c \) for all \( x \in X \) and \( c \in \mathbb{R} \). A function \( \psi \colon X \to [-\infty, \infty] \) satisfies the \emph{deviation property} if \( \psi(x + c) = \psi(x) \) for all \( x \in X \) and \( c \in \mathbb{R} \).

\begin{corollary}\label{cor:translationproperty}
Suppose that every \( \phi \in \Phi \) has the translation property.
Let \( \psi \colon X \to (-\infty, \infty] \) be a function with \( A_s := \{ x \in X \colon \psi(x) \le s \} \) for all \( s \in \mathbb{R} \). Then, the following statements are equivalent:
\begin{itemize}
\item[(i)] \( \psi \) has the translation property, is monotone, and has countably upwards directed sublevel sets.
\item[(ii)] There exists a countably stable function \( \gamma \colon \Phi \to [-\infty, \infty] \) with
\[
\psi(x) = \sup_{\phi \in \Phi} \{\phi(x) - \gamma(\phi)\} \quad \text{for all } x \in X.
\]
\item[(iii)] The mapping \( \phi \mapsto \alpha_{\min}(0, \phi) := \sup_{x \in A_0} \phi(x) \) is countably stable and
\[
\psi(x) = \sup_{\phi \in \Phi} \{\phi(x) - \alpha_{\min}(0, \phi)\} \quad \text{for all } x \in X.
\]
\end{itemize}
\end{corollary}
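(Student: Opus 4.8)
The plan is to prove the cycle $(i)\Rightarrow(iii)\Rightarrow(ii)\Rightarrow(i)$, deriving the hard direction from Theorem~\ref{thm:main} and using the translation property to turn the two-parameter penalty $\alpha_{\min}(s,\phi)$ into a single additive shift $\gamma(\phi)+s$. The decisive ingredient is the translation property of the functional $\psi$ itself: it is automatic under (ii) and (iii), since $\psi(x+c)=\sup_{\phi\in\Phi}\{\phi(x+c)-\gamma(\phi)\}=\sup_{\phi\in\Phi}\{\phi(x)-\gamma(\phi)\}+c=\psi(x)+c$ by the translation property of each $\phi$, and I would use it as the standing hypothesis on $\psi$ when arguing from (i).

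For $(i)\Rightarrow(iii)$, I would first invoke the equivalence in Theorem~\ref{thm:main} (specifically condition~(iii) there) to obtain that $\alpha_{\min}(s,\phi)=\sup_{x\in A_s}\phi(x)$ is countably stable in the second argument and that $\psi$ satisfies the inf-representation~\eqref{eq:mainRepBis}. The crux is the identity $\alpha_{\min}(s,\phi)=\alpha_{\min}(0,\phi)+s$. Since $\psi(x)\le s$ is equivalent to $\psi(x-s)\le 0$, the translation property of $\psi$ gives $A_s=A_0+s$, and then the translation property of each $\phi$ yields $\alpha_{\min}(s,\phi)=\sup_{z\in A_0}(\phi(z)+s)=\alpha_{\min}(0,\phi)+s$. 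Setting $\gamma:=\alpha_{\min}(0,\cdot)$, which is stable by Theorem~\ref{thm:main}(iii), the infimum in \eqref{eq:mainRepBis} collapses to $\psi(x)=\inf\{s\in\R\colon \sup_{\phi\in\Phi}\{\phi(x)-\gamma(\phi)\}\le s\}=\sup_{\phi\in\Phi}\{\phi(x)-\gamma(\phi)\}$, which is exactly (iii). The step $(iii)\Rightarrow(ii)$ is then immediate, taking $\gamma=\alpha_{\min}(0,\cdot)$.

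For $(ii)\Rightarrow(i)$, monotonicity is clear: each $\phi$ is monotone for $\preccurlyeq$ by \eqref{def:oder}, and a pointwise supremum of monotone maps is monotone. For the sublevel sets I would rewrite $A_s=\{x\in X\colon \phi(x)\le\gamma(\phi)+s\ \text{for all }\phi\in\Phi\}$ and apply Proposition~\ref{prop:duality}, so that countable directedness of $A_s$ reduces to countable stability of $\gamma+s$. The latter I would obtain from stability of $\gamma$ by a shift argument dual to the one above: given a countable $Y$ with $\sup_{y\in Y}\phi(y)\le\gamma(\phi)+s$ for all $\phi$, translate to $Y-s$ so that $\sup_{y\in Y}\phi(y-s)\le\gamma(\phi)$, apply stability of $\gamma$ to produce a bound $z^\ast$, and translate back to $z^\ast+s$, using that $z\mapsto z+s$ is an order isomorphism for $\preccurlyeq$ and that every $\phi$ has the translation property.

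The main obstacle is the bookkeeping around $\alpha_{\min}(s,\phi)=\alpha_{\min}(0,\phi)+s$ and the collapse of the infimum, where the conventions at $\pm\infty$ must be handled with care: an empty $A_0$ forces $\psi\equiv\infty$ and is consistent with $\gamma(\phi)=-\infty$, while $\gamma(\phi)=+\infty$ simply removes $\phi$ from the supremum in (ii). Everything else amounts to transcribing the $s$-indexed representation of Theorem~\ref{thm:main} into its one-parameter, cash-additive form and checking that stability survives the shift $\gamma\mapsto\gamma+s$.
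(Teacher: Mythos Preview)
Your argument is correct and follows the same route as the paper: invoke Theorem~\ref{thm:main}(iii), use the translation property to obtain $\alpha_{\min}(s,\phi)=\alpha_{\min}(0,\phi)+s$, and collapse the infimum in \eqref{eq:mainRepBis}. The only cosmetic difference is that for $(ii)\Rightarrow(i)$ the paper appeals directly to Theorem~\ref{thm:main} (viewing $\alpha(s,\phi):=\gamma(\phi)+s$ as an instance of~(ii) there), whereas you unpack this via Proposition~\ref{prop:duality} together with the shift argument for stability of $\gamma+s$; the content is the same. You are also right to flag that the translation property of $\psi$ itself is needed for the step $(i)\Rightarrow(iii)$ and is only derived a posteriori under~(ii) and~(iii); the paper tacitly uses it at the very start of its proof of $(i)\Rightarrow(iii)$.
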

\begin{proof}
$(i)\Rightarrow(iii)$: Since $\psi$ has the translation property, it holds $\psi(x)\le s$ if and only if $\psi(x-s)\le 0$, so that 
$A_s=A_0-s:=\{x-s\colon x\in A_0\}$ for all $s\in \mathbb{R}$.
Hence, since every $\phi\in\Phi$ has the translation property, we obtain
for every $s\in\mathbb{R}$ and $\phi\in\Phi$,
\[
\alpha_{\min}(s,\phi)=\sup_{x\in A_s}\phi(x)=\sup_{x\in A_0}\phi(x+s)=\sup_{x\in A_0}\phi(x)+s=\alpha_{\min}(0,\phi)+s.
\]
It follows from Theorem~\ref{thm:main} that for every $x\in X$,
 \begin{align*}
  \psi(x)&=\inf\left\{s \in\mathbb{R}\colon \underset{\phi\in\Phi}\sup\{\phi(x)-\alpha_{\min}(s,\phi)\} \le 0\right\}\\
  &=\inf\left\{s \in\mathbb{R}\colon \underset{\phi\in\Phi}\sup\{\phi(x)-\alpha_{\min}(0,\phi)\} \le s\right\}\\
  &=\underset{\phi\in\Phi}\sup\{\phi(x)-\alpha_{\min}(0,\phi)\}.
 \end{align*}
 
 $(iii)\Rightarrow(ii)$ is obvious.
 
 $(ii)\Rightarrow(i)$: That $\psi$ is monotone and has countably upwards directed sublevel sets follows from Theorem~\ref{thm:main}. Moreover, since every $\phi\in\Phi$ has the translation property, for every $x\in X$ and $c\in\mathbb{R}$,
 \[
\psi(x+c)=\sup_{\phi\in\Phi}\{\phi(x+c)-\gamma(\phi)\}=\sup_{\phi\in\Phi}\{\phi(x)+c-\gamma(\phi)\}=\psi(x)+c,
 \]
which shows that $\psi$ has the translation property.
\end{proof}

\begin{corollary}
Suppose that every \( \phi \in \Phi \) satisfies the deviation property, and let \( \psi \colon X \to (-\infty, \infty] \) be a monotone function with countably upwards directed sublevel sets. If \( \psi \) additionally satisfies the translation property, then \( \psi(x) = \infty \) for all \( x \in X \).
\end{corollary}
\begin{proof}
It follows from Theorem \ref{thm:main} that for every $x\in X$,
\[
\psi(x)=\inf\left\{s\in\mathbb{R}\colon \phi(x)\le \alpha_{\min}(s,\phi)\mbox{ for all }\phi\in\Phi\right\}.
\]
By similar arguments as in Corollary~\ref{cor:translationproperty}, it holds $\alpha_{\min}(s,\phi)=\alpha_{\min}(0,\phi)$ for all $s\in\mathbb{R}$. Hence, it necessary holds
$\psi(x)=\infty$. 
\end{proof}

\section{Maxitivity with respect to stochastic orders}\label{sec:ex}

Let $(\Omega,\mathcal{F},\mathbb{P})$ be an atomless probability space.\footnote{Recall that a probability space $(\Omega,\mathcal{F},\mathbb{P})$ is atomless if for all $A\in\mathcal{F}$ with $\mathbb{P}(A)>0$ there exists $B\in\mathcal{F}$ such that $B\subset A$ and $0<\mathbb{P}(B)<\mathbb{P}(A)$; see e.g.~\cite{delbaen0}.}  
We denote by $\mathcal{M}$ the set of all random variables $\xi\colon \Omega\to\R$, where two of them are identified if they have the same law.
Further, let  $\mathcal{M}_\infty$ and $\mathcal{M}_1$ be the sets of all random variables in $\mathcal{M}$ which are essentially bounded, and have finite first moments, respectively. 
In the following, we analyze the main results of the previous section for several stochastic orders. 
For definitions and further details on stochastic orders, we refer to~\cite{muller2002,shaked2007}.

\subsection{Usual stochastic order}
For $\xi\in \mathcal{M}$, the \emph{quantile function} $q_\xi\colon (0,1)\to\mathbb{R}$ is defined as the left-inverse of the cumulative distribution function, i.e.,
\[
q_\xi(u):=
\inf\{v\in\mathbb{R}\colon u\le \mathbb{P}(\xi\le v)\}.
\]
On the one hand, the quantile function $q_\xi$ is non-decreasing and left-continuous, see Lemma~\ref{lem:basicQuantile}.  
On the other hand, every function $q\colon (0,1)\to\mathbb{R}$ which is non-decreasing and left-continuous satisfies $q=q_\xi$ for some $\xi\in\mathcal{M}$, see Lemma~\ref{lem:basicQuantile2}. 
Hence, there is a one-to-one relation between the set $\mathcal{M}$ and quantile functions.

The \emph{usual stochastic order} on $\mathcal{M}$ is defined by  
\[
\xi\preccurlyeq_{\rm st} \eta\quad\mbox{if and only if}\quad q_\xi(u)\le q_\eta(u)\mbox{ for all }u\in(0,1).  
\]
 The ordered space $(\mathcal{M},\preccurlyeq_{\rm st})$ is a  complete lattice, see~\cite{kertz,muller}. 
In particular, every  sequence $(\xi_n)_{n\in\mathbb{N}}\subset \mathcal{M}$ which is bounded from above has a supremum  $\xi^\ast=\vee_{n\in\N}\xi_n$ in $\mathcal{M}$ which is determined by   
 $q_{\xi^\ast}(u)=\sup_{n\in\N}q_{\xi_n}(u)$ for all $u\in(0,1)$.
  
Let $\Phi_{\rm st}:=\{\phi_u\colon u\in(0,1)\}$ denote the family of functions
$\phi_u\colon \mathcal{M}\to \mathbb{R}$ given by $\phi_u(\xi):=q_{\xi}(u)$. We identify $\Phi_{\rm st}$ with $(0,1)$, and endow $(0,1)$ with the Euclidean topology.\footnote{$(0,1)$ is not complete with the Euclidean distance. However, it is completely metrizable, that is, there exists an equivalent metric on $(0,1)$ such that $(0,1)$ with the relative Euclidean topology is a complete metric space space. 
In fact, every countable intersection of open subsets of a complete metric space is completely metrizable, see~\cite[Theorem 24.12]{willard}.} 
Then, it holds 
$\xi \preccurlyeq_{\rm st} \eta$ if and only if $\phi_u(\xi)\le \phi_u(\eta)$ for all $u\in(0,1)$. Moreover, for 
 every $\xi\in\mathcal{M}$, the mapping $u\mapsto \phi_u(\xi)$ is lower semicontinuous. Therefore, the partially ordered space $(\mathcal{M},\preccurlyeq_{\rm st})$ satisfies the assumptions of Section \ref{sec:mainResult}. 
 \begin{lemma}\label{lem:stfunctions}
  Every function $\gamma\colon (0,1)\to [-\infty,\infty)$ is countably stable.    
 \end{lemma}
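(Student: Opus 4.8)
The plan is to unfold countable stability into a statement purely about quantile functions and then to exploit the one-to-one correspondence between $\mathcal{M}$ and the non-decreasing, left-continuous functions on $(0,1)$ recalled in Lemmas~\ref{lem:basicQuantile} and~\ref{lem:basicQuantile2}. With $X=\mathcal{M}$, $\Phi=\Phi_{\rm st}$ identified with $(0,1)$ via $\phi_u(\xi)=q_\xi(u)$, and $\alpha=\gamma$, the assertion to prove reads: for every at most countable family $\{\xi_n:n\in\mathbb{N}\}\subset\mathcal{M}$ with $\sup_n q_{\xi_n}(u)\le\gamma(u)$ for all $u\in(0,1)$, there exists $\xi^\ast\in\mathcal{M}$ such that $\xi_n\preccurlyeq_{\rm st}\xi^\ast$ for all $n$ and $q_{\xi^\ast}(u)\le\gamma(u)$ for all $u$.

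The natural candidate is the element whose quantile function is the pointwise supremum $q^\ast(u):=\sup_n q_{\xi_n}(u)$, so first I would check that $q^\ast$ is an admissible quantile function, i.e.\ real-valued, non-decreasing and left-continuous on $(0,1)$. Here the two hypotheses do all the work on the range: $q^\ast(u)\le\gamma(u)<\infty$ keeps $q^\ast$ finite from above, while $q^\ast(u)\ge q_{\xi_1}(u)>-\infty$ keeps it finite from below for a nonempty family; and monotonicity is clear since a supremum of non-decreasing functions is non-decreasing. Granting admissibility, Lemma~\ref{lem:basicQuantile2} produces a (unique) $\xi^\ast\in\mathcal{M}$ with $q_{\xi^\ast}=q^\ast$, and the two defining clauses of stability follow at once: $q_{\xi_n}\le q^\ast=q_{\xi^\ast}$ pointwise gives $\xi_n\preccurlyeq_{\rm st}\xi^\ast$ for every $n$, and $q_{\xi^\ast}=q^\ast\le\gamma$ is the second requirement.

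The one step that is not mere bookkeeping is the left-continuity of $q^\ast$, which I expect to be the crux. Using that the left limit of a non-decreasing function is its supremum over smaller arguments, I would compute
\[
q^\ast(u-)=\sup_{v<u}\sup_n q_{\xi_n}(v)=\sup_n\sup_{v<u}q_{\xi_n}(v)=\sup_n q_{\xi_n}(u-)=\sup_n q_{\xi_n}(u)=q^\ast(u),
\]
where the interchange of suprema is harmless and the penultimate equality is the left-continuity of each $q_{\xi_n}$ from Lemma~\ref{lem:basicQuantile}. This is exactly the complete-lattice identity $q_{\xi^\ast}=\sup_n q_{\xi_n}$ recalled before the lemma, so as an alternative one may simply invoke that statement once the family is seen to be bounded above---which it is, namely by the $\xi^\ast$ just constructed.

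Finally I would dispense with the value $-\infty$: if $\gamma(u_0)=-\infty$ for some $u_0$, then, quantile functions being real-valued, no nonempty family can fulfil $\sup_n q_{\xi_n}(u_0)\le\gamma(u_0)$, so the stability condition is vacuously satisfied wherever $\gamma$ takes the value $-\infty$. It therefore suffices to run the construction above for nonempty at most countable families (finite families being the special case of an eventually constant sequence), and this establishes that $\gamma$ is countably stable.
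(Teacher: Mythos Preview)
Your proposal is correct and follows essentially the same approach as the paper: define $q^\ast(u)=\sup_n q_{\xi_n}(u)$, verify it is a real-valued, non-decreasing, left-continuous function on $(0,1)$, and invoke Lemma~\ref{lem:basicQuantile2} to obtain $\xi^\ast\in\mathcal{M}$ with $q_{\xi^\ast}=q^\ast$. The paper's proof is terser and simply asserts these properties, whereas you supply the details (in particular the left-continuity computation via interchange of suprema) and also dispose of the edge cases where $\gamma$ takes the value $-\infty$ or the family is empty; these additions are sound and do not change the underlying argument.
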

 \begin{proof}
Let $(\xi_n)_{n\in\mathbb{N}}\subset \mathcal{M}$ with $q^\ast(u):=\sup_{n\in\N} q_{\xi_n}(u)\le \gamma(u)$ for all $u\in(0,1)$. Then, the mapping $u\mapsto q^\ast(u)$ is  real-valued, non-decreasing and left-continuous. Hence, there exists $\xi^\ast\in\mathcal{M}$ with $q^\ast(u)=q_{\xi^\ast}(u)$ for all $u\in(0,1)$. 
In particular, $\xi^\ast=\vee_{n\in\N}\xi_n$ and $\phi_u(\xi^\ast)=q^\ast(u)\le \gamma(u)$ for all $u\in (0,1)$.
\end{proof}
For a function $\psi\colon \mathcal{M}\to (-\infty,\infty]$ 
with sublevel sets $A_s:=\{\xi\in \mathcal{M}\colon \psi(\xi)\le s\}$, we define $\alpha_{\min}(s,u):=\sup_{\xi\in A_s}q_\xi(u)$ and $\alpha^+_{\min}(s,u):=\inf_{s<t}\alpha_{\min}(t,u)$
for all $s\in\R$ and $u\in(0,1)$. If $\psi$ is countably maxitive, then the sublevel sets are countably upwards directed, so that $u\mapsto\alpha_{\min}(s,u)$ is real-valued for all $s\in\R$ with $A_s\neq \emptyset$. 
As a direct application of Theorem~\ref{thm:main} and Lemma~\ref{relation:maxitive}, we obtain the following representation result for maxitive functions w.r.t.~the usual stochastic order.
 A related representation result for maxitive functionals with respect to the usual stochastic order was investigated in parallel to our work in \cite[Theorem 1]{wang2}.

\begin{corollary}
Let $\psi\colon \mathcal{M}\to (-\infty,\infty]$ be a function. 
Then, the following conditions are equivalent:
\begin{itemize}
    \item[(i)] $\psi$ is bounded countably maxitive w.r.t. $\preccurlyeq _{\rm st}$.
    \item[(ii)] $\psi$ is monotone and has countably upwards directed sublevel sets w.r.t. $\preccurlyeq _{\rm st}$.
    \item[(iii)] There exists a function $\alpha\colon \mathbb R\times (0,1)\to [-\infty,\infty)$ which is non-decreasing in the first argument such that 
    \[
    \psi(\xi)=\inf\left\{ s\in\mathbb{R} \colon \sup_{u\in(0,1)}\{q_\xi(u)-\alpha(s,u)\}\le 0\right\}\quad\mbox{for all }\xi\in\mathcal{M}.
    \]
    The infimum is attained for all $\xi\in\mathcal{M}$ with $\psi(\xi)<\infty$.  
\item[(iv)] The mapping $u\mapsto\alpha_{\min}(s,u)$ is real-valued for all $s\in\R$ with $A_s\neq \emptyset$. Moreover, it holds
\[
\psi(\xi)=\sup_{u\in(0,1)}G(q_\xi(u),u)\quad\mbox{ for all }\xi\in \mathcal{M},
\]
where $G(t,u):=\inf\{s\in\mathbb{R}\colon t\le \alpha_{\min}^+(s,u)\}$ for all $(t,u)\in \mathbb{R}\times (0,1)$.
\end{itemize}
\end{corollary}

If $\psi$ also has the translation property, then 
we get the following version of the previous result.
We notice that $q_{\xi+c}=q_{\xi}+c$
for all $\xi\in\mathcal{M}$ and $c\in\R$, i.e., the mapping $\phi_u\colon\mathcal{M}\to\R$ has the translation property for all $u\in(0,1)$. Hence, we can apply Corollary~\ref{cor:translationproperty} and obtain the following result. 
\begin{corollary}\label{cor:st}
Let $\psi\colon \mathcal{M}\to (-\infty,\infty]$ be a function with the translation property.
Then, the following conditions are equivalent:
\begin{itemize}
    \item[(i)] $\psi$ is bounded countably maxitive  w.r.t. $\preccurlyeq _{\rm st}$. 
    \item[(ii)] There exists a function $\alpha\colon \R\times(0,1)\to [-\infty,\infty)$ such that
\[
\psi(\xi)=\sup_{u\in(0,1)}\{q_\xi(u)-\alpha(u)\}\quad\mbox{for all }\xi\in \mathcal{M}.
\]
\item[(iii)] The mapping $u\mapsto \alpha_{\min}(0,u)=\sup_{\xi\in A_0}q_\xi(u)$ is real-valued, and 
    \[
\psi(\xi)=\sup_{u\in(0,1)}\{q_\xi(u)-\alpha_{\min}(0,u)\} \quad\mbox{ for all }\xi\in \mathcal{M}.
\]
\end{itemize}
\end{corollary}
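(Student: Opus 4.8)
The plan is to obtain Corollary~\ref{cor:st} as a specialization of Corollary~\ref{cor:translationproperty} to $\Phi=\Phi_{\rm st}$, using Lemma~\ref{lem:stfunctions} to dispose of the stability hypotheses. First I would note that all standing assumptions of Section~\ref{sec:mainResult} are in force here, as verified in the discussion preceding the statement: $\Phi_{\rm st}$ is identified with the completely metrizable space $(0,1)$, and each map $u\mapsto\phi_u(\xi)=q_\xi(u)$ is lower semicontinuous since $q_\xi$ is non-decreasing and left-continuous. Because $(\mathcal{M},\preccurlyeq_{\rm st})$ is a complete lattice, it is countably complete, so Lemma~\ref{relation:maxitive} gives that (i) is equivalent to ``$\psi$ is monotone and has countably directed sublevel sets''. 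Finally, $q_{\xi+c}=q_\xi+c$ shows that every $\phi_u$ has the translation property, so that Corollary~\ref{cor:translationproperty} applies verbatim.

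With these reductions in place, Corollary~\ref{cor:translationproperty} already yields, for a $\psi$ with the translation property, the equivalence of ``$\psi$ monotone with countably directed sublevel sets'' with the two representations $\psi(\xi)=\sup_{u\in(0,1)}\{q_\xi(u)-\gamma(u)\}$ for a countably stable $\gamma$, and $\psi(\xi)=\sup_{u\in(0,1)}\{q_\xi(u)-\alpha_{\min}(0,u)\}$ with $\alpha_{\min}(0,u)=\sup_{\xi\in A_0}q_\xi(u)$ countably stable. The remaining task is therefore purely to reconcile the word ``stable'' appearing there with the word ``real-valued'' appearing in Corollary~\ref{cor:st}(ii)--(iii). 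For the implication into (i) this is immediate: given a $[-\infty,\infty)$-valued $\alpha$ as in (ii), Lemma~\ref{lem:stfunctions} guarantees that $\alpha$ is countably stable, so Corollary~\ref{cor:translationproperty} returns monotonicity and countably directed sublevel sets, hence countable maxitivity via Lemma~\ref{relation:maxitive}. The implication (iii)$\Rightarrow$(ii) is trivial, taking $\alpha=\alpha_{\min}(0,\cdot)$.

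The step I expect to carry the only genuine content -- and hence the main obstacle -- is showing that in the representation coming from Corollary~\ref{cor:translationproperty} the function $\alpha_{\min}(0,\cdot)$ is automatically $[-\infty,\infty)$-valued, i.e.\ (on the nondegenerate part where $A_0\neq\emptyset$) real-valued. Here the special structure of $\preccurlyeq_{\rm st}$ enters: if $\alpha_{\min}(0,u_0)=\sup_{\xi\in A_0}q_\xi(u_0)=\infty$ for some $u_0$, I would pick $\xi_n\in A_0$ with $q_{\xi_n}(u_0)\to\infty$; countable directedness of $A_0$ produces $\xi^\ast\in A_0$ with $\xi_n\preccurlyeq_{\rm st}\xi^\ast$ for all $n$, whence $q_{\xi^\ast}(u_0)\ge\sup_n q_{\xi_n}(u_0)=\infty$, contradicting $\xi^\ast\in\mathcal{M}$. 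This is exactly the finiteness observation recorded before the theorem, and together with Lemma~\ref{lem:stfunctions} it shows that over $\Phi_{\rm st}$ stability and $[-\infty,\infty)$-valuedness coincide for these representing functions. Combining, (i)$\Rightarrow$(iii) follows by upgrading ``stable'' to ``real-valued'' in Corollary~\ref{cor:translationproperty}(iii), which closes the cycle. The only point requiring care is the degenerate case $A_0=\emptyset$ (equivalently $\psi\equiv\infty$ by the translation property), where $\alpha_{\min}(0,\cdot)\equiv-\infty$ and the representation still holds with the convention $q_\xi(u)-(-\infty)=\infty$.
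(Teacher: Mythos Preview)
Your proposal is correct and follows exactly the route the paper takes: the paper's entire proof is the one-line remark ``we can apply Corollary~\ref{cor:translationproperty}'', and you have simply unpacked the implicit bookkeeping (Lemma~\ref{relation:maxitive} for maxitivity versus directedness, Lemma~\ref{lem:stfunctions} and the finiteness of $\alpha_{\min}(0,\cdot)$ for trading stability against $[-\infty,\infty)$-valuedness, and the degenerate case $A_0=\emptyset$). There is nothing to add.
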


Recall that the \emph{value at risk} ${\rm VaR}_u\colon \mathcal{M}\to \mathbb{R}$ at level $u\in(0,1)$, 
which is defined by ${\rm VaR}_u(\xi):=q_\xi(u)$,
is a monetary risk measure; see \cite{follmer}.
The previous Corollary~\ref{cor:st} is closely related to 
\cite[Theorem 4]{wang} which provides a representation result 
for lower semicontinuous completely maxitive functions
$\mathcal{M}_1\to\R$ having the translation property. A related representation result was also shown in \cite[Proposition 4]{bignozzi} for the class of benchmark-loss VaR, which were introduced by Bignozzi et al.~\cite{bignozzi}. 
\begin{remark}
For every sequence $(\xi_n)_{n\in\N}\subset\mathcal{M}$ such that $\vee_{n\in\N}\xi_n$ exists, it holds ${\rm VaR}_u(\vee_{n\in\N} \xi_n)=\sup_{n\in\N}{\rm VaR}_u(\xi_n)$.
Nevertheless, the function ${\rm VaR}_u\colon \mathcal{M}\to \mathbb{R}$ is not countably bounded maxitive because it may happen that $\sup_{n\in\N}{\rm VaR}_u(\xi_n)<\infty$ for an unbounded sequence $(\xi_n)_{n\in\N}\subset\mathcal{M}$. 
However, ${\rm VaR}_u$ can also be 
defined on the set $\mathcal{M}^\ast$ of all extended random variables $\xi\colon \Omega\to(-\infty,\infty]$. 
Then, the supremum $\vee_{i\in I} \xi_i$ exists for every family $(\xi_i)_{i\in I}\subset \mathcal{M}^\ast$ and ${\rm VaR}_u\colon \mathcal{M}^\ast\to (-\infty,\infty]$ is completely maxitive. 
\end{remark}

\subsection{Increasing convex  stochastic order} 
For $\xi\in \mathcal{M}_1$, we define the 
\emph{integrated quantile function} $Q_\xi\colon (0,1)\to\mathbb{R}$  by
\[
Q_\xi(u):=\int_u^1 q_\xi(v)\,{\rm d}v.
\]
The function $u\mapsto Q_\xi(u)$ is concave, with $Q_\xi(0^+)=\mathbb{E}[\xi]$ and $Q_\xi(1^-)=0$. 
Conversely, every concave function $Q\colon (0,1)\to \mathbb{R}$ with $Q(0^+)\in\mathbb{R}$ and $Q(1^-)=0$ is of the form  $Q=Q_\xi$ for some random variable $\xi\in \mathcal{M}_1$, see~Lemma~\ref{lem:basicQuantile3}.  
The \emph{increasing convex order} on $\mathcal{M}_1$ is defined by 
\[
\xi\preccurlyeq _{\rm icx} \eta\quad\mbox{if and only if}\quad Q_\xi(u)\le Q_\eta(u)\quad\mbox{ for all }u\in(0,1).  
\]
The ordered space $(\mathcal{M}_1,\preccurlyeq_{\rm icx})$ is a  complete lattice, see~\cite{kertz,muller}. In particular,  
 every  sequence $(\xi_n)_{n\in\mathbb{N}}\subset \mathcal{M}_1$ which is bounded from above has a supremum  $\xi^\ast=\vee_{n\in\N}\xi_n$ which is determined by   
 \[
  Q_{\xi^\ast}(u)={\rm conc}(Q_{\xi_1},Q_{\xi_2},\cdots)(u)\quad\mbox{ for all }u\in(0,1).
 \]  
Here, ${\rm conc}(Q_{\xi_1},Q_{\xi_2},\cdots)$ denotes the \emph{concave hull} of the sequence $(Q_{\xi_n})_{n\in\mathbb{N}}$, i.e.,
the smallest concave function $Q^\ast\colon (0,1)\to [-\infty,\infty]$ with $\sup_{n\in\mathbb{N}}Q_{\xi_n}(u)\le Q^\ast(u)$ for all $u\in(0,1)$.\footnote{Notice that ${\rm conc}(Q_{\xi_1},Q_{\xi_2},\cdots)$ is the pointwise infimum over all concave functions $Q\colon(0,1)\to(-\infty,\infty]$ with  
$Q_{\xi_n}(u)\le Q(u)$ for all $u\in(0,1)$ and $n\in\mathbb{N}$.} 

Let $\Phi_{\rm icx}:=\{\phi_u\colon u\in (0,1)\}$ be the family 
of functions $\phi_u\colon\mathcal{M}_1\to\R$ given by $\phi_u(\xi):=Q_\xi(u)$. We identify $\Phi_{\rm icx}$ with $(0,1)$, and endow $(0,1)$ with the Euclidean topology. For every $\xi\in\mathcal{M}_1$, the function $u\mapsto \phi_u(\xi)$ is continuous. Hence, the partially ordered space $(\mathcal{M}_1,\preccurlyeq_{\rm icx})$ satisfies the assumptions of Section \ref{sec:mainResult}. 
\begin{lemma}\label{lem:icxfunctions}
Every concave function $\gamma\colon (0,1)\to [-\infty,\infty)$ is countably stable. 
\end{lemma}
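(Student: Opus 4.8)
The plan is to mirror the proof of Lemma~\ref{lem:stfunctions}, replacing the pointwise supremum of quantile functions by the \emph{concave hull} of integrated quantile functions, so that the resulting function remains concave and can be recognized as an admissible integrated quantile function via Lemma~\ref{lem:basicQuantile3}. Let $(\xi_n)_{n\in\mathbb{N}}\subset\mathcal{M}_1$ be a countable family realizing the stability hypothesis, i.e.\ $\sup_{n\in\mathbb{N}}Q_{\xi_n}(u)\le\gamma(u)$ for all $u\in(0,1)$. I set $Q^\ast:={\rm conc}(Q_{\xi_1},Q_{\xi_2},\dots)$, the smallest concave function dominating every $Q_{\xi_n}$, and aim to produce the upper bound $\xi^\ast$ as the random variable with $Q_{\xi^\ast}=Q^\ast$.

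The first step is to record that $Q^\ast\le\gamma$. Indeed, $\gamma$ is itself a concave function lying above each $Q_{\xi_n}$, so by the minimality in the definition of the concave hull we get $Q^\ast\le\gamma<\infty$ pointwise; hence $Q^\ast$ is real-valued and concave on $(0,1)$. The left-endpoint condition is then immediate: from $Q^\ast\ge Q_{\xi_1}$ we obtain $Q^\ast(0^+)\ge\mathbb{E}[\xi_1]>-\infty$, while $Q^\ast(0^+)\le\gamma(0^+)<\infty$ because a finite concave function on $(0,1)$ has a finite right limit at $0$ (it lies below every supporting line, whose value at $0^+$ is finite). Thus $Q^\ast(0^+)\in\mathbb{R}$.

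The crux is the right-endpoint normalization $Q^\ast(1^-)=0$, and I expect this to be the main obstacle. One half is free: since $Q^\ast\ge Q_{\xi_1}$ and $Q_{\xi_1}(1^-)=0$, we get $\liminf_{u\to1^-}Q^\ast(u)\ge0$. The delicate half is the matching upper bound $\limsup_{u\to1^-}Q^\ast(u)\le0$. This cannot come from the individual normalizations $Q_{\xi_n}(1^-)=0$ alone, because the raw supremum $\sup_{n}Q_{\xi_n}$ may fail to vanish as $u\to1^-$; the estimate must instead be extracted from the upper bound $Q^\ast\le\gamma$ near $u=1$, combined with the fact that the family admits an upper bound in $(\mathcal{M}_1,\preccurlyeq_{\rm icx})$. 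Concretely, I would argue $\limsup_{u\to1^-}\gamma(u)\le0$, which forces $Q^\ast(1^-)\le0$ and hence $Q^\ast(1^-)=0$. This is precisely the point where the concavity and finiteness of $\gamma$ interact with the normalization of integrated quantile functions, and it is the only step that genuinely requires care.

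Once the admissibility of $Q^\ast$ is established, the conclusion is routine. By Lemma~\ref{lem:basicQuantile3} there exists $\xi^\ast\in\mathcal{M}_1$ with $Q_{\xi^\ast}=Q^\ast$. By construction $Q_{\xi_n}\le Q^\ast=Q_{\xi^\ast}$ for every $n$, i.e.\ $\xi_n\preccurlyeq_{\rm icx}\xi^\ast$, and moreover $Q_{\xi^\ast}=Q^\ast\le\gamma$, that is $\phi_u(\xi^\ast)\le\gamma(u)$ for all $u\in(0,1)$. These are exactly the two properties required by the definition of countable stability, so $\gamma$ is countably stable.
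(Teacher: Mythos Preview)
Your approach is essentially the paper's: form the concave hull $Q^\ast={\rm conc}(Q_{\xi_1},Q_{\xi_2},\dots)$, use the concavity of $\gamma$ to obtain $Q^\ast\le\gamma$, and then identify $Q^\ast$ as the integrated quantile function of the desired upper bound $\xi^\ast$. You go further than the paper by attempting to verify the hypotheses of Lemma~\ref{lem:basicQuantile3} explicitly, and you rightly single out $Q^\ast(1^-)=0$ as the crux.

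However, your proposed justification of that step --- the inequality $\limsup_{u\to1^-}\gamma(u)\le0$ --- is false for a general concave $\gamma\colon(0,1)\to[-\infty,\infty)$; take $\gamma\equiv1$. Worse, for that choice of $\gamma$ the conclusion $Q^\ast(1^-)=0$ itself fails: let $\xi_n$ take the value $n$ with probability $1/n$ and $0$ otherwise, so that $Q_{\xi_n}(u)=1$ for $u\le1-1/n$ and $Q_{\xi_n}(u)=n(1-u)$ for $u\ge1-1/n$. Then $\sup_n Q_{\xi_n}\equiv1\le\gamma$ on $(0,1)$, whence $Q^\ast\equiv1$ and $Q^\ast(1^-)=1$; in fact this sequence admits no upper bound in $(\mathcal{M}_1,\preccurlyeq_{\rm icx})$ at all. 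So the step you isolate cannot be closed using only concavity of $\gamma$, and the statement as literally written is problematic. The paper's own proof sidesteps this entirely, passing directly from ``$Q^\ast$ is real-valued'' to ``$\xi^\ast=\vee_n\xi_n\in\mathcal{M}_1$ and $Q_{\xi^\ast}=Q^\ast$'' via the lattice structure; this is harmless in the applications, where $\gamma=\alpha_{\min}(s,\cdot)$ arises from a countably directed sublevel set and hence automatically satisfies $\gamma(1^-)=0$, but your attempt to fill the gap with bare concavity cannot succeed.
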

 \begin{proof}
Let $(\xi_n)_{n\in\mathbb{N}}\subset \mathcal{M}_1$ with $\vee_{n\in\N} Q_{\xi_n}(u)\le \gamma(u)$ for all $u\in(0,1)$. 
Since $\gamma$ is concave, it holds $Q^\ast(u):={\rm conc}(Q_{\xi_1},Q_{\xi_2},\cdots)(u)\le \gamma(u)$ for all $u\in(0,1)$. 
Since $u\mapsto Q^\ast(u)$ is real-valued, we obtain $Q_{\xi^\ast}=Q^\ast$, where $\xi^\ast=\vee_{n\in\N}\xi_n\in\mathcal{M}_1$, and $Q_{\xi^\ast}(u)=Q^\ast(u)\le \gamma(u)$ for all $u\in(0,1)$. 
 \end{proof}

For a function $\psi\colon \mathcal{M}_1\to(-\infty,\infty]$
with sublevel sets $A_s:=\{\xi\in \mathcal{M}_1\colon \psi(\xi)\le s\}$, we define 
$\alpha_{\min}(s,u):=\sup_{\xi\in A_s}Q_\xi(u)$ and $\alpha^+_{\min}(s,u):=\inf_{s<t}\alpha_{\min}(t,u)$ for all $s\in\R$ and $u\in(0,1)$.
If $\psi$ is countably maxitive, then the sublevel sets are countably upwards directed, from which it is straightforward to verify that
the mapping $u\mapsto\alpha_{\min}(s,u)$ is real-valued and concave for all $s\in\R$ with $A_s\neq \emptyset$.\footnote{As for the concavity, we notice that for $\xi_1,\xi_2\in A_s$, there exists $\xi^\ast\in A_s$ with $Q_{\xi^\ast}={\rm conc}(Q_{\xi_1},Q_{\xi_2})$ such that $\lambda Q_{\xi_1}(u_1)+(1-\lambda) Q_{\xi_2}(u_2)\le Q_{\xi^\ast}(\lambda u_1+(1-\lambda)u_2)$ for all $u_1,u_2,\lambda\in(0,1)$.} As a direct application of Theorem~\ref{thm:main} and Lemma~\ref{relation:maxitive}, we obtain the following representation result for maxitive functions w.r.t.~the increasing convex stochastic order.

\begin{corollary}\label{cor:icx}
Let $\psi\colon  \mathcal{M}_1\to (-\infty,\infty]$ be a function. 
Then, the following conditions are equivalent:
\begin{itemize}
    \item[(i)] $\psi$ is bounded countably maxitive w.r.t.~$\preccurlyeq _{\rm icx}$.
     \item[(ii)] $\psi$ is monotone and has countably upwards directed sublevel sets w.r.t.~$\preccurlyeq _{\rm icx}$.
    \item[(iii)] There exists a function $\alpha\colon \mathbb R\times (0,1)\to [-\infty,\infty)$ which is non-decreasing in the first argument and concave in the second argument such that 
    \[
    \psi(\xi)=\inf\left\{ s\in\mathbb{R} \colon \sup_{u\in(0,1)}\{Q_\xi(u)-\alpha(s,u)\}\le 0\right\}\quad\mbox{for all }\xi\in\mathcal{M}_1. 
    \]
    The infimum is attained for all $\xi\in\mathcal{M}_1$ with $\psi(\xi)<\infty$. 
\item[(iv)] The mapping $u\mapsto\alpha_{\min}(s,u)$ is real-valued and concave for all $s\in\R$ with $A_s\neq \emptyset$. Moreover, it holds
 \[
\psi(\xi)=\sup_{u\in(0,1)}G(Q_\xi(u),u)\quad\mbox{ for all }\xi\in\mathcal{M}_1,
\]
where $G(t,u):=\inf\{s\in\mathbb{R}\colon t\le \alpha_{\min}^+(s,u)\}$ for all $(t,u)\in \mathbb{R}\times (0,1)$.
\end{itemize}
\end{corollary}

If $\psi$ in addition has the translation property, then we can argue as follows. Since the function $\xi\mapsto Q_\xi$ does not have the translation property, we consider the modified family $\Phi^\prime_{\rm icx}:=\{{\rm ES}_u\colon u\in (0,1)\}$. Here, ${\rm ES}_u\colon \mathcal{M}_1\to \mathbb{R}$ is the \emph{expected shortfall}  given by
 \[
 {\rm ES}_u(\xi)
 := \frac{1}{1-u}Q_{\xi}(u)\quad\mbox{ for all }u\in(0,1).
 \]
 The expected shortfall is a convex risk measure; see~\cite{follmer}. 
 In particular, the functions ${\rm ES}_u$ have the translation property
 and also induce the partial order $\preccurlyeq _{\rm icx}$.  
 However, by changing to the family $\Phi^\prime_{\rm icx}$, we 
 also change the set of countably stable functions. 
 More precisely, a function $\gamma\colon(0,1)\to [-\infty,\infty]$ is countably stable w.r.t.~$\Phi_{\rm icx}$ if and only if the function $u\mapsto \frac{1}{1-u}\gamma(u)$ is countably stable w.r.t.~$\Phi_{\rm icx}^\prime$. Moreover, 
 the minimal penalty function $\alpha_{\min}^\prime$ w.r.t. $\Phi_{\rm icx}^\prime$ is given by $$\alpha_{\min}^\prime(s,u)=\tfrac{1}{1-u}\alpha_{\min}(s,u).$$ 
As a consequence of Corollary~\ref{cor:translationproperty}, we obtain the following result. 
 \begin{corollary}\label{cor:shortFall}
Let $\psi\colon\mathcal{M}_1\to(-\infty,\infty]$ be a function with the translation property. Then, the following conditions are equivalent:
\begin{itemize}
\item[(i)] $\psi$ is bounded countably maxitive w.r.t. $\preccurlyeq_{\rm icx}$.
\item[(ii)] There exists a concave function $\alpha\colon(0,1)\to [-\infty,\infty)$ such that 
\[
\psi(\xi)=\sup_{u\in (0,1)}\left\{{\rm ES}_u(\xi)-\tfrac{1}{1-u}\alpha(u)\right\}\quad\mbox{for all }\xi\in\mathcal{M}_1.
\]

\item[(iii)] The mapping $u\mapsto \alpha_{\min}(0,u)$ is real-valued and concave. Moreover, it holds
\[
\psi(\xi)=
\sup_{u\in (0,1)}\left\{{\rm ES}_u(\xi)-\tfrac{1}{1-u}\alpha_{\min}(0,u)\right\}\quad\mbox{for all }\xi\in\mathcal{M}_1.
\]
\end{itemize}
\end{corollary} 
The previous Corollary~\ref{cor:shortFall} is closely related to
\cite[Theorem 5]{wang}. Moreover, it is shown in \cite{wang} that the function ${\rm ES}_u$ does not always preserve the maximum of two randoms variables w.r.t.~$\preccurlyeq_{\rm icx}$, i.e., the function ${\rm ES}_u$ is not maxitive w.r.t.~$\preccurlyeq_{\rm icx}$. 

\subsection{Convex stochastic order}
The \emph{convex order} on $\mathcal{M}_1$ is defined by  $$\xi\preccurlyeq _{\rm cx} \eta\quad\mbox{if and only if}\quad
\mathbb{E}[\xi]=\mathbb{E}[\eta]\mbox{ and }\xi\preccurlyeq _{\rm icx} \eta.$$ 
The partially ordered space $(\mathcal{M}_1,\preccurlyeq _{\rm cx})$ is not a lattice on $\mathcal{M}_1$ because it is not possible to define the maximum of two random variables $\xi,\nu\in \mathcal{M}_1$ for which $\mathbb{E}[\xi]\neq \mathbb{E}[\eta]$.  
However, for $r\in\R$, the convex order agrees with the increasing convex order on the set
\[
{\mathcal{M}}_1^r:=\left\{ \xi\in{\mathcal{M}}_1\colon \mathbb{E}[\xi]=r\right\}.
\]
In particular, $(\mathcal{M}^r_1,\preccurlyeq _{\rm cx})$ is a complete lattice. For $\psi\colon \mathcal{M}^r_1\to(-\infty,\infty]$
with sublevel sets $A_s:=\{\xi\in \mathcal{M}^r_1\colon \psi(\xi)\le s\}$, we define 
$\alpha_{\min}\colon \mathbb{R}\times (0,1)\to [-\infty,\infty]$ by $\alpha_{\min}(s,u):=\sup_{\xi\in A_s}Q_\xi(u)$ and $\alpha^+_{\min}(s,u):=\inf_{s<t}\alpha_{\min}(t,u)$. 
As a direct application of Theorem~\ref{thm:main} and Lemma~\ref{relation:maxitive}, we conclude that a function \(\psi\colon \mathcal{M}_1^r \to \mathbb{R}\) is countably maxitive with respect to \(\preccurlyeq_{\rm cx}\), if it admits a representation similar to that in Corollary~\ref{cor:icx}, with $\mathcal{M}_1$ replaced by $\mathcal{M}_1^r$.

\subsection{Increasing concave  stochastic order}
For $\xi\in\mathcal{M}_1$, we define the function $\bar Q_\xi\colon (0,1)\to\mathbb R$ by
\[
\bar Q_\xi(u):=-{Q}_{-\xi}(u)=-\int_u^1 q_{-\xi}(v)\,{\rm d} v.
\]
The mapping $t\mapsto \bar Q_\xi(t)$ is convex and satisfies $\bar Q_\xi(0^+)=\mathbb{E}[\xi]$ and $\bar Q_\xi(1^-)=0$. Conversely, every convex function $\bar Q\colon (0,1)\to \mathbb{R}$ 
with $\bar Q(0^+)\in\mathbb{R}$ and $\bar Q(1^-)=0$ is of the form $\bar Q=\bar Q_\xi$ for some random variable $\xi\in \mathcal{M}_1$. 
The \emph{increasing concave stochastic order}  is defined by  $$\xi \preccurlyeq_{\rm icv} \eta\quad\mbox{if and only if }\quad -\eta\preccurlyeq_{\rm icx} -\xi.$$ That is,  $\xi \preccurlyeq_{\rm icv} \eta$ if and only if $\overline{Q}_\xi(u)\le \overline{Q}_\eta(u)$ for all $u\in(0,1)$. 
The partially ordered space $(\mathcal{M}_1,\preccurlyeq_{\rm icv})$ is a  complete lattice, see~\cite{kertz,muller}. 
In particular, every  sequence $(\xi_n)_{n\in\N}\subset \mathcal{M}_1$ which is bounded from above w.r.t. $\preccurlyeq_{\rm icv}$ has a supremum  $\xi^\ast=\vee_{n\in\N}\xi_n$ which is determined by $\bar Q_{\xi^\ast}(u)=\vee_{n\in \N} \bar Q_{\xi_n}(u)$ for all $u\in(0,1)$. We consider the family $\Phi_{\rm icv}:=\{\phi_{u}\colon u\in(0,1)\}$ of functions $\phi_u\colon\mathcal{M}_1\to\mathbb{R}$ given by $\phi_u(\xi):=\bar{Q}_\xi(u)$. 
 Hence, the partially ordered space $(\mathcal{M}_1,\preccurlyeq_{\rm icv})$ satisfies the conditions of Section \ref{sec:mainResult}. 
With arguments similar to those for the usual stochastic order, we get the following result.
\begin{lemma}\label{lem:icvfunctions}
Every function $\gamma\colon (0,1)\to [-\infty,\infty)$  is countably stable.    
 \end{lemma}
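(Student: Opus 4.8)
The plan is to mirror the argument for the usual stochastic order in Lemma~\ref{lem:stfunctions}, with the quantile function $q_\xi$ replaced by the integrated quantile function $\bar Q_\xi$, and to exploit that countable suprema for $\preccurlyeq_{\rm icv}$ are computed by the pointwise supremum of the associated functions $\bar Q_\xi$. Concretely, I would take a countable family $(\xi_n)_{n\in\N}\subset\mathcal{M}_1$ satisfying $\sup_{n\in\N}\bar Q_{\xi_n}(u)\le\gamma(u)$ for all $u\in(0,1)$ and set $\bar Q^\ast(u):=\sup_{n\in\N}\bar Q_{\xi_n}(u)$. The goal is to exhibit a random variable $\xi^\ast\in\mathcal{M}_1$ with $\bar Q_{\xi^\ast}=\bar Q^\ast$; once this is available, $\xi^\ast$ dominates every $\xi_n$ in $\preccurlyeq_{\rm icv}$, it coincides with $\vee_{n\in\N}\xi_n$ by the supremum formula for this order, and it satisfies $\bar Q_{\xi^\ast}(u)=\bar Q^\ast(u)\le\gamma(u)$ for all $u$, which is exactly the countable stability of $\gamma$.

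To produce $\xi^\ast$ I would invoke the converse characterization recorded in this subsection: every convex $\bar Q\colon(0,1)\to\R$ with $\bar Q(0^+)\in\R$ and $\bar Q(1^-)=0$ is of the form $\bar Q=\bar Q_\xi$ for some $\xi\in\mathcal{M}_1$. The bulk of the proof is then to verify that $\bar Q^\ast$ meets these requirements. Convexity is immediate, as $\bar Q^\ast$ is a pointwise supremum of the convex functions $\bar Q_{\xi_n}$. Pointwise finiteness is equally clear, since $\bar Q_{\xi_1}(u)\le\bar Q^\ast(u)\le\gamma(u)<\infty$ traps $\bar Q^\ast$ between two real numbers at each $u$. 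For the right endpoint I would use convexity together with $\bar Q_{\xi_n}(1^-)=0$: fixing $u_0\in(0,1)$, the chord inequality gives $\bar Q_{\xi_n}(u)\le\frac{1-u}{1-u_0}\bar Q_{\xi_n}(u_0)$ for $u_0<u<1$, and taking the supremum over $n$ yields $\bar Q^\ast(u)\le\frac{1-u}{1-u_0}\bar Q^\ast(u_0)\to 0$ as $u\to1^-$; since $\bar Q^\ast\ge\bar Q_{\xi_1}$ forces $\liminf_{u\to1^-}\bar Q^\ast(u)\ge0$, we conclude $\bar Q^\ast(1^-)=0$.

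The step I expect to be the main obstacle is the left endpoint, namely establishing $\bar Q^\ast(0^+)\in\R$. This is precisely where the situation differs from the increasing convex case: in Lemma~\ref{lem:icxfunctions} the relevant object is concave, and a finite concave function on $(0,1)$ automatically has a finite left limit, whereas here $\bar Q^\ast$ is convex and a finite convex function may a priori diverge to $+\infty$ at $0^+$. Thus pointwise finiteness of $\bar Q^\ast$ does not by itself suffice, and the finiteness of the limit must be secured separately. Note that $\bar Q^\ast(0^+)=\lim_{u\to0^+}\bar Q^\ast(u)$ equals $\sup_{n\in\N}\mathbb{E}[\xi_n]$ up to the convexity bookkeeping, so this condition is equivalent to $(\xi_n)_{n\in\N}$ being bounded from above in $\preccurlyeq_{\rm icv}$; it is the delicate point that the real-valuedness of $\gamma$ is meant to guarantee, and where the convex--concave asymmetry makes the increasing concave case genuinely more subtle than its increasing convex counterpart. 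Once this finiteness is in hand, the four properties of $\bar Q^\ast$ are verified, the characterization supplies $\xi^\ast\in\mathcal{M}_1$ with $\bar Q_{\xi^\ast}=\bar Q^\ast$, and $\xi^\ast=\vee_{n\in\N}\xi_n$ completes the proof of countable stability.
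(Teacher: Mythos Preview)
Your approach mirrors exactly what the paper intends---the paper merely writes ``with arguments similar to those for the usual stochastic order'' and gives no further details---and you have correctly isolated the one step that does not carry over: the finiteness of $\bar Q^\ast(0^+)$. This gap is not an artifact of your argument; the lemma as stated is false. Take $\gamma(u):=-\log u$, a legitimate function $(0,1)\to[-\infty,\infty)$, and for each $n\in\N$ set $\bar Q_n(u):=\log n+1-nu$ on $(0,1/n]$ and $\bar Q_n(u):=-\log u$ on $(1/n,1)$. Each $\bar Q_n$ is convex (the two pieces agree in value and slope at $u=1/n$), with $\bar Q_n(0^+)=\log n+1\in\R$ and $\bar Q_n(1^-)=0$, so $\bar Q_n=\bar Q_{\xi_n}$ for some $\xi_n\in\mathcal{M}_1$. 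The elementary bound $\log t\le t-1$ (with $t=nu\in(0,1]$) yields $\bar Q_n(u)\le-\log u=\gamma(u)$ for all $u\in(0,1)$, so the hypothesis of countable stability is met. Yet $\sup_{n\in\N}\bar Q_n(u)=-\log u$ for every $u$, and any $\xi^\ast\in\mathcal{M}_1$ with $\xi_n\preccurlyeq_{\rm icv}\xi^\ast$ for all $n$ would satisfy $\bar Q_{\xi^\ast}(u)\ge-\log u$, forcing $\mathbb{E}[\xi^\ast]=\bar Q_{\xi^\ast}(0^+)=+\infty$. Hence no such $\xi^\ast$ exists and $\gamma=-\log$ is not countably stable.

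The asymmetry you sensed is precisely the point: for $\preccurlyeq_{\rm st}$ a real-valued non-decreasing left-continuous function on $(0,1)$ is automatically a quantile function, with no endpoint condition to check, whereas for $\preccurlyeq_{\rm icv}$ the integrability constraint $\bar Q(0^+)\in\R$ is an additional requirement that a pointwise bound by an arbitrary $\gamma$ does not enforce. A correct version of the lemma needs an extra hypothesis on $\gamma$ near $0$; for instance, requiring $\gamma$ to be bounded above on some interval $(0,\varepsilon)$ would, together with the convexity of $\bar Q^\ast$ and the lower bound $\bar Q^\ast\ge\bar Q_{\xi_1}$, guarantee $\bar Q^\ast(0^+)\in\R$ and let your argument go through.
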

 
For a function $\psi\colon \mathcal{M}\to (-\infty,\infty]$ 
with sublevel sets $A_s:=\{\xi\in \mathcal{M}_1\colon \psi(\xi)\le s\}$, we define 
 $\alpha_{\min}(s,u):=\sup_{\xi\in A_s}\bar Q_\xi(u)$ and $\alpha^+_{\min}(s,u):=\inf_{s<t}\alpha_{\min}(t,u)$ for all 
 $s\in\R$ and $u\in(0,1)$. If $\psi$ is countably maxitive, then the sublevel sets are countably upwards directed, so that $u\mapsto\alpha_{\min}(s,u)$ is real-valued for all $s\in\R$ with $A_s\neq \emptyset$.  
As a direct application of Theorem~\ref{thm:main} and Lemma~\ref{relation:maxitive}, we obtain the following representation result for maxitive functions w.r.t.~the increasing concave stochastic order. 
\begin{corollary}
Let $\psi\colon \mathcal{M}_1\to (-\infty,\infty]$ be a function. 
Then, the following conditions are equivalent.
\begin{itemize}
    \item[(i)] $\psi$ is bounded countably maxitive w.r.t. $\preccurlyeq _{\rm icv}$.
    \item[(ii)] $\psi$ is monotone and has countably upwards directed sublevel sets w.r.t. $\preccurlyeq _{\rm icv}$.
    \item[(ii)] There exists a function $\alpha\colon \mathbb R\times (0,1)\to [-\infty,\infty)$ which is non-decreasing in the first argument such that 
    \[
    \psi(\xi)=\inf\left\{ s\in\mathbb{R} \colon \sup_{u\in(0,1)}\{\bar Q_\xi(u)-\alpha(s,u)\}\le 0\right\}\quad\mbox{for all }\xi\in\mathcal{M}_1.
    \]
    The infimum is attained for all $\xi\in\mathcal{M}_1$ with $\psi(\xi)<\infty$.  
\item[(iii)] The mapping $u\mapsto\alpha_{\min}(s,u)$ is real-valued for all $s\in\R$ with $A_s\neq \emptyset$. Moreover, it holds
    \[
\psi(\xi)=\sup_{u\in(0,1)}G\left(\bar Q_\xi(u),u\right)\quad\mbox{ for all }\xi\in \mathcal{M},
\]
where $G(t,u):=\inf\{s\in\mathbb{R}\colon t\le \alpha_{\min}^+(s,u)\}$ for all $(t,u)\in \mathbb{R}\times (0,1)$.
\end{itemize}
\end{corollary}

In case that $\psi$ has the translation property, we can proceed as follows. Since the function $\xi\mapsto \bar Q_\xi$ does not have the translation property, we consider the modified family 
$\Phi_{icv}^\prime:=\{\overline{\rm ES}_u\colon u\in(0,1)\}$, where $\overline{{\rm ES}}_u\colon \mathcal{M}_1\to\R$ is defined by
$\overline{{\rm ES}}_u(\xi):=-{{\rm ES}}_u(-\xi)$.
 The functions $\overline{{\rm ES}}_u$ are translation invariant and induce the order $\preccurlyeq_{\rm icv}$. The families $\Phi_{\rm icv}^\prime$ and $\Phi_{\rm icv}$ result in the same countably stable functions. In addition, we obtain
 \[
 \alpha_{\min}^\prime(s,u)=\frac{1}{1-u}\alpha_{\min}(s,u),
 \]
 where $\alpha_{\min}^\prime$ is the minimal penalty function w.r.t. $\Phi_{\rm icv}^\prime$. 
As a consequence of Corollary~\ref{cor:translationproperty}, we conclude that a function  $\psi\colon\mathcal{M}_1\to(-\infty,\infty]$ with the translation property is countably maxitive with respect to  $\preccurlyeq_{\rm icv}$, if it admits a representation similar to that in Corollary~\ref{cor:shortFall}, with ${\rm ES}_u$ replaced by $\overline{{\rm ES}}_u$.

\subsection{Dispersive stochastic order}\label{sec:disp order}
The \emph{dispersive stochastic order} on $\mathcal{M}$ is defined by 
\[
\xi\preccurlyeq _{\rm disp} \eta\quad\mbox{if and only if}\quad
q_\xi(v)-q_\xi(u)\le q_\eta(v)-q_\eta(u)\mbox{ for all }0<u\le v < 1.  
\]
The dispersive order is not antisymmetric. However, by considering the quotient space $\mathcal{M}/_\sim$ w.r.t.~the equivalence relation
$\xi{\sim} \eta$ if and only if  $\xi=\eta+c$ for some $c\in\R$, we obtain that 
the partially ordered space $(\mathcal{M}/_\sim,\preccurlyeq_{\rm disp})$ is a lattice, see~\cite{muller}. 

\smallskip
For a non-empty set $Q$ of quantile functions and $[u,v]\subset [0,1]$, we define its \emph{total variation} on $[u,v]$ as
\[
{\rm TV}_{[u,v]}\left(Q\right)
:=\sup_{\pi\in \Pi[u,v]}S_\pi(Q), 
\]
where $\Pi[u,v]$ consists of all partitions $\pi=\{u=t_0<t_1<\cdots<t_n=v\}$ for some $n\in\N$. Here, we set $S_\pi(Q):=\sum_{i=1}^n  \sup_{q\in Q}(q(t_i)-q(t_{i-1}))$ with $q(0):=q(0^+)$ and $q(1):=q(1^-)$. We say that the set $Q$ is of \emph{bounded variation} if ${\rm TV}_{[0,1]}(Q)<\infty$. 
It is straightforward to verify that ${\rm TV}_{[0,v]}={\rm TV}_{[0,u]}+{\rm TV}_{[u,v]}$ for all $0<u\le v<1$.

\begin{proposition}\label{prop:existenceSup}
 Suppose that $Q:=\{q_\xi\colon \xi\in A\}$ is of bounded variation
 for some  $A\subset \mathcal{M}$.  
 Then, there exists $\xi^\ast\in \mathcal{M}$ such that $q_{\xi^\ast}(t)={\rm TV}_{[0,t]}(Q)$ for all $t\in (0,1]$. 
 Moreover, $\xi^\ast$ is a supremum of $A$ w.r.t. $\preccurlyeq _{\rm disp}$.
\end{proposition}
\begin{proof}
By Proposition~\ref{prop:leftContTV} and the fact that $Q$ is of bounded variation, the function  $t\mapsto {\rm TV}_{[0,t]}(Q)$ is real-valued, non-decreasing and left-continuous. Hence, there exists $\xi^\ast\in \mathcal{M}$ such that $q_{\xi^\ast}(t)={\rm TV}_{[0,t]}(Q)$ for all $t\in (0,1]$. 
 Given $u,v\in\R$ with $0<u\le v<1$, for every $\xi\in A$, we have that
 \[
 q_\xi(v)-q_\xi(u)\le {\rm TV}_{[u,v]}(Q)=  {\rm TV}_{[0,v]}(Q) - {\rm TV}_{[0,u]}(Q)  =q_{\xi^\ast}(v)-q_{\xi^\ast}(u).
 \]
 Thus, $\xi^\ast$ is an upper bound of $A$ w.r.t. $\preccurlyeq _{\rm disp}$. 
 On the other hand, if $\eta$ is another upper bound of $A$  w.r.t.~$\preccurlyeq _{\rm disp}$, we necessarily have that
 \[
 q_{\xi^\ast}(v)-q_{\xi^\ast}(u)={\rm TV}_{[u,v]}(Q)\le {\rm TV}_{[u,v]}(q_\eta)=q_{\eta}(v)-q_{\eta}(u),
 \]
 where we have used in the last equality that $q_{\eta}$ is non-decreasing. 
 This implies that $\xi^\ast \preccurlyeq _{\rm disp}\eta$.
\end{proof}

\begin{corollary}\label{lem:disp_complete}
 The partially ordered space $(\mathcal{M}_\infty/_\sim,\preccurlyeq_{\rm disp})$ is a complete lattice.   
\end{corollary}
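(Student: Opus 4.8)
The plan is to deduce the assertion from Proposition~\ref{prop:existenceSup}, which already manufactures suprema out of families of quantile functions of bounded variation. Since $\preccurlyeq_{\rm disp}$ is only a preorder on $\mathcal{M}_\infty$ but descends to a genuine partial order on the quotient $\mathcal{M}_\infty/_\sim$, and since suprema and infima are insensitive to adding constants, it suffices to work with representatives in $\mathcal{M}_\infty$ and to exhibit, for a given family, a least upper bound (resp.\ greatest lower bound) that is again essentially bounded. Concretely, I would reduce order completeness to two facts: (a) every nonempty subset that is bounded from above has a supremum in $\mathcal{M}_\infty/_\sim$; and (b) the class of constant random variables is a least element. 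From (a) and (b) the existence of all infima follows by the standard lattice argument, and the binary join and meet are just the two-element case.

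For (a), let $\mathcal{A}\subset \mathcal{M}_\infty/_\sim$ be nonempty and bounded from above, and lift it to $A\subset\mathcal{M}_\infty$ with an upper bound $\eta\in\mathcal{M}_\infty$, so that $q_\xi(v)-q_\xi(u)\le q_\eta(v)-q_\eta(u)$ for all $\xi\in A$ and all $0<u\le v<1$. Writing $Q:=\{q_\xi\colon \xi\in A\}$, I would first check that $Q$ is of bounded variation: for any partition $\{0=t_0<\cdots<t_n=1\}$ the increments $\sup_{q\in Q}(q(t_i)-q(t_{i-1}))$ are dominated by $q_\eta(t_i)-q_\eta(t_{i-1})$ (letting $u\to 0^+$ and $v\to 1^-$ to treat the two outer intervals, with the conventions $q(0)=q(0^+)$, $q(1)=q(1^-)$), so that $S_\pi(Q)$ telescopes to at most $q_\eta(1^-)-q_\eta(0^+)<\infty$, the finiteness coming from the essential boundedness of $\eta$. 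Thus ${\rm TV}_{[0,1]}(Q)<\infty$, and Proposition~\ref{prop:existenceSup} yields $\xi^\ast\in\mathcal{M}$ with $q_{\xi^\ast}(t)={\rm TV}_{[0,t]}(Q)$ that is a supremum of $A$ with respect to $\preccurlyeq_{\rm disp}$. Since $0\le q_{\xi^\ast}(t)\le {\rm TV}_{[0,1]}(Q)<\infty$, the variable $\xi^\ast$ is essentially bounded, hence $\xi^\ast\in\mathcal{M}_\infty$, and being least among upper bounds in $\mathcal{M}$ it is a fortiori least among upper bounds in $\mathcal{M}_\infty$. Any two such suprema have identical increments and therefore differ by a constant, so $\xi^\ast$ determines a well-defined element of $\mathcal{M}_\infty/_\sim$, which is the desired $\vee\mathcal{A}$.

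For (b), a constant random variable has a constant quantile function, hence zero increments, so its class $\mathbf{0}$ satisfies $\mathbf 0\preccurlyeq_{\rm disp}\zeta$ for every $\zeta\in\mathcal{M}_\infty/_\sim$ and is therefore a least element. Consequently, for an arbitrary nonempty $\mathcal{A}$, the set $L$ of its lower bounds contains $\mathbf{0}$ and is bounded above by any fixed element of $\mathcal{A}$; by (a) it admits a supremum, which is readily checked to be the greatest element of $L$, i.e.\ $\inf\mathcal{A}$. Specializing to two-element families gives the binary operations, so $(\mathcal{M}_\infty/_\sim,\preccurlyeq_{\rm disp})$ is a lattice that is order complete in the stated sense. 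The only genuinely technical point is the bounded-variation estimate: I expect the main care to go into the endpoint conventions at $0$ and $1$ and into passing between $q_\xi$ and $q_\xi^+$ via Lemma~\ref{prop:dispOrderQuant} and Proposition~\ref{prop:rightIncr}, together with the verification that the supremum produced by Proposition~\ref{prop:existenceSup} remains in $\mathcal{M}_\infty$ rather than merely in $\mathcal{M}$.
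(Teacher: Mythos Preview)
Your proposal is correct and follows essentially the same route as the paper: both bound the total variation of the family of quantile functions by that of an essentially bounded upper bound $\eta$ and then invoke Proposition~\ref{prop:existenceSup} to produce the supremum in $\mathcal{M}_\infty$. The paper's version is terser (it normalizes representatives to $q_\xi(0^+)=0$ and records only the existence of suprema for bounded-above families), while your explicit treatment of the least element and of infima via the set of lower bounds fills in details the paper leaves implicit.
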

 \begin{proof}
 Let $A \subset \mathcal{M}_\infty/_\sim$ be a set which is bounded from above w.r.t.~$\preccurlyeq _{\rm disp}$, and denote by $A_0$ the set of respective representatives $\xi$ with $q_{\xi}(0^+)=0$. 
 Suppose that $\xi^\ast$ is an upper bound of $A_0$.  
 Then
 \[
{\rm TV}_{[0,1]}(A_0)\le {\rm TV}_{[0,1]}(q_{\xi^\ast})=q_{\xi^\ast}(1^-)-q_{\xi^\ast}(0^+)<\infty. 
 \]
Hence, $A_0$ is of bounded variation and has a supremum in $\mathcal{M}_\infty$ w.r.t.~$\preccurlyeq _{\rm disp}$ by Proposition~\ref{prop:existenceSup}. 
 Then passing to the quotient, $A$ has a unique supremum in $\mathcal{M}_\infty/_\sim$. 
 \end{proof}

It follows from Proposition~\ref{prop:existenceSup}
that every sequence $(\xi_n)_{n\in\N}\subset \mathcal{M}_\infty$ which is bounded from above w.r.t.~$\preccurlyeq_{\rm disp}$ has a supremum $\xi^\ast\in \mathcal{M}_\infty$ which is  determined 
by 
\[
q_{\xi^\ast}(u):={\rm TV}_{[0,u]}\left(\{q_{\xi_n}\colon n\in\N\}\right)\quad\mbox{ for all }u\in(0,1].
\]
Here, for $0\le u<v\le 1$, we denote by 
${\rm TV}_{[u,v]}\left(Q\right)
:=\sup_{\pi\in \Pi[u,v]}S_\pi(Q)$
the \emph{total variation} along $[u,v]$ of a non-empty set $Q$ of quantile functions, 
where $\Pi[u,v]$ is the set of all partitions of the form
$\pi=\{u=t_0<t_1<\cdots<t_n=v\}$ for some $n\in\N$, for which we set 
$S_\pi(Q):=\sum_{i=1}^n \sup_{q\in Q}(q(t_i)-q(t_{i-1}))$ with $q(0):=q(0^+)$ and $q(1):=q(1^-)$.  
It follows from Proposition~\ref{prop:dispOrderQuant} that  
\begin{equation}\label{eq:dispQuantiles}
\xi\preccurlyeq _{\rm disp} \eta\quad\mbox{if and only if}\quad 
    q_\xi(v)-q_\xi^+(u)\le q_\eta(v)-q_\eta^+(u)\mbox{ for all }0<u< v < 1,  
\end{equation}
where $q_\xi^+(u):=\inf\{x\in\mathbb{R}\colon \mathbb{P}(\xi\le x)>u\}$ is the right quantile function of $\xi$. 

In the following, we consider the family $\Phi_{\rm disp}:=\{\phi_{u,v}\colon 0<u< v < 1\}$ of functions $\phi_{u,v}\colon \mathcal{M}_\infty\to \mathbb{R}$ defined by $\phi_{u,v}(\xi):=q_\xi(v)-q_\xi^+(u)$.   
We identify $\Phi_{\rm disp}$ with the set $\Delta:=\{(u,v)\in \mathbb{R}^2\colon 0<u<v<1\}$ which we endow with the Euclidean topology.\footnote{Although $\Delta$ is not a complete metric space, it is completely metrizable as it is an open subset,  see~\cite[Theorem 24.12]{willard}. In particular, $\Delta$ is a Polish space with the Euclidean topology.} 
It follows from \eqref{eq:dispQuantiles} that the family $\Phi_{\rm disp}$ induces the dispersive order on $\mathcal{M}_\infty$.
By Proposition~\ref{lem:basicQuantile}, the mapping $(u,v)\mapsto \phi_{u,v}(\xi)$ is lower semicontinuous on $\Delta$ for all $\xi\in\mathcal{M}_\infty$. 
Hence, the ordered space $(\mathcal{M}_\infty,\preccurlyeq _{\rm disp})$ satisfies the assumptions of Section \ref{sec:mainResult}.
\begin{lemma}
 A function $\alpha\colon \Delta\to [0,\infty)$ is countably stable if there exists a non-decreasing function $\beta\colon [0,1]\to\mathbb{R}$ such that  
 $\alpha(u,v)=\beta(v)-\beta(u)$ for all $(u,v)\in \Delta$.    
\end{lemma}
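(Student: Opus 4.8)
The plan is to unwind countable stability for the family $\Phi_{\rm disp}$ and to realise the required witness as a supremum built from total variation. Concretely, let $(\xi_n)_{n\in\N}\subset\mathcal{M}_\infty$ be a countable family with $\sup_n\phi_{u,v}(\xi_n)\le\alpha(u,v)=\beta(v)-\beta(u)$ for all $(u,v)\in\Delta$, i.e., by definition of $\phi_{u,v}$,
\[
\sup_{n\in\N}\big(q_{\xi_n}(v)-q_{\xi_n}^+(u)\big)\le\beta(v)-\beta(u)\qquad\text{for all }0<u<v<1.
\]
Writing $Q:=\{q_{\xi_n}\colon n\in\N\}$, I would take as candidate the random variable $\xi^\ast$ whose quantile function is $q^\ast(u):={\rm TV}_{[0,u]}(Q)$, and show that $\xi^\ast\in\mathcal{M}_\infty$, that $\xi_n\preccurlyeq_{\rm disp}\xi^\ast$ for all $n$, and that $\phi_{u,v}(\xi^\ast)\le\beta(v)-\beta(u)$; this $\xi^\ast$ will turn out to be $\vee_{n}\xi_n$ by Proposition~\ref{prop:existenceSup}.

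The engine of the proof is a pair of sharpened consequences of the hypothesis, obtained by passing to one-sided limits and using that $q_{\xi_n}$ is left-continuous while $q_{\xi_n}^+$ is right-continuous (Proposition~\ref{lem:basicQuantile}). First, letting $b\uparrow v$ and $a\downarrow u$ in the inequality $q_{\xi_n}(b)-q_{\xi_n}^+(a)\le\beta(b)-\beta(a)$ gives
\[
q_{\xi_n}(v)-q_{\xi_n}^+(u)\le\beta(v^-)-\beta(u^+)\qquad(0<u<v<1);
\]
second, letting $a\uparrow t$ and $b\downarrow t$ bounds the jump of each quantile function by that of $\beta$,
\[
q_{\xi_n}^+(t)-q_{\xi_n}(t)\le\beta(t^+)-\beta(t^-)\qquad(0<t<1).
\]
Adding these two estimates, for any $0<s<t<1$ and every $n$ one obtains the increment bound
\[
q_{\xi_n}(t)-q_{\xi_n}(s)=\big(q_{\xi_n}^+(s)-q_{\xi_n}(s)\big)+\big(q_{\xi_n}(t)-q_{\xi_n}^+(s)\big)\le\beta(t^-)-\beta(s^-),
\]
which is uniform in $n$, so that $\sup_{q\in Q}(q(t)-q(s))\le\beta(t^-)-\beta(s^-)$. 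Telescoping this over any partition of an interval $[w,v]\subset(0,1)$ yields ${\rm TV}_{[w,v]}(Q)\le\beta(v^-)-\beta(w^-)$, and the analogous computation with left endpoint $0$ (where the convention $q(0)=q(0^+)$ applies) gives ${\rm TV}_{[0,v]}(Q)\le\beta(v^-)-\beta(0^+)<\infty$.

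With these bounds in hand the remaining steps are routine. Finiteness and the uniform bound $\beta(1^-)-\beta(0^+)$ show that $q^\ast$ is real-valued and bounded; it is non-decreasing and left-continuous (the latter by a standard argument from left-continuity of the $q_{\xi_n}$), hence $q^\ast=q_{\xi^\ast}$ for some $\xi^\ast\in\mathcal{M}_\infty$ by Lemma~\ref{lem:basicQuantile2}. Using additivity of total variation, $q^\ast(v)-(q^\ast)^+(u)=\lim_{w\downarrow u}{\rm TV}_{[w,v]}(Q)$; bounding ${\rm TV}_{[w,v]}(Q)$ from above by $\beta(v^-)-\beta(w^-)$ and from below by $q_{\xi_n}(v)-q_{\xi_n}(w)$ simultaneously yields, via \eqref{eq:dispQuantiles}, both $\xi_n\preccurlyeq_{\rm disp}\xi^\ast$ and $\phi_{u,v}(\xi^\ast)=q^\ast(v)-(q^\ast)^+(u)\le\beta(v^-)-\beta(u^+)\le\beta(v)-\beta(u)=\alpha(u,v)$, which is exactly countable stability. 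The main obstacle, and the only genuinely delicate point, is the jump accounting in the increment bound: using the raw hypothesis $q_{\xi_n}(t)-q_{\xi_n}^+(s)\le\beta(t)-\beta(s)$ directly over-counts the jumps of the quantile functions at interior partition points and fails to telescope, so one must first upgrade the hypothesis to the two one-sided estimates above, after which the left-limit values $\beta(\cdot^-)$ telescope exactly.
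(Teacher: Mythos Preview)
Your argument is correct and follows the same overall strategy as the paper: construct the witness via the total-variation quantile $q_{\xi^\ast}(u)={\rm TV}_{[0,u]}(Q)$ (this is exactly Proposition~\ref{prop:existenceSup}), and then bound ${\rm TV}_{[u,v]}(Q)$ in terms of $\beta$. The tactical difference lies in how the telescoping is achieved. You first upgrade the hypothesis, via the two one-sided limit estimates and the jump bound, to the increment inequality $q_{\xi_n}(t)-q_{\xi_n}(s)\le\beta(t^-)-\beta(s^-)$, so that the ordinary sums $S_\pi(Q)$ telescope over the left-limit values of $\beta$. The paper instead invokes Proposition~\ref{prop:rightIncr}, which rewrites total variation as $\sup_\pi\underline{S}_\pi(Q)$ with $\underline{S}_\pi(Q)=\sum_i\sup_{q\in Q}\big(q(t_i)-q^+(t_{i-1})\big)$; since the hypothesis is precisely a bound on $q(t_i)-q^+(t_{i-1})$, the telescoping $\underline{S}_\pi(Q)\le\beta(v)-\beta(u)$ is immediate and no jump accounting is required. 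Your route is more self-contained (it does not rely on the $\underline{S}_\pi$ representation), while the paper's is shorter once Proposition~\ref{prop:rightIncr} is available; in effect, the ``delicate point'' you isolate is exactly what that proposition absorbs.
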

\begin{proof}
  Suppose that $(\xi_n)_{n\in\N}\subset\mathcal{M}_\infty$ satisfies $\sup_{n\in\mathbb{N}}\{q_{\xi_n}(v)-q_{\xi_n}^+(u)\}\le \alpha(u,v)$ for all $(u,v)\in \Delta$. As an application of Proposition~\ref{prop:rightIncr} and Proposition \ref{prop:leftContTV}, we obtain that 
  \[
  {\rm TV}_{[0,1]}(\{q_{\xi_n}\colon n\in\N\})=\lim_{t\to 1^-} {\rm TV}_{[0,t]}(\{q_{\xi_n}\colon n\in\N\})\le \beta(1)-\beta(0)<\infty.
  \]
  By Proposition~\ref{prop:existenceSup}, the sequence $(\xi_n)_{n\in\mathbb{N}}$ has a supremum $\xi^\ast\in\mathcal{M}_\infty$ which is determined by
  \[
  q_{\xi^\ast}(u)={\rm TV}_{[0,u]}(\{q_{\xi_n}\colon n\in\N\})\quad\mbox{ for all }u\in (0,1].
  \]
  Then for $(u,v)\in \Delta$, we obtain from ${\rm TV}_{[0,v]}={\rm TV}_{[0,u]}+{\rm TV}_{[u,v]}$ that
  \begin{align*}
  q_{\xi^\ast}(v)-q_{\xi^\ast}^+(u)
  &\le {\rm TV}_{[0,v]}(\{q_{\xi_n}\colon n\in\N\})-{\rm TV}_{[0,u]}(\{q_{\xi_n}\colon n\in\N\})\\
  &={\rm TV}_{[u,v]}(\{q_{\xi_n}\colon n\in\N\})\le \beta(v)-\beta(u)=\alpha(u,v),    
  \end{align*}
  which shows that $\alpha$ is countable stable.
\end{proof} 

A set $Q$ of quantile functions is of bounded variation if ${\rm TV}_{[0,1]}(Q)<\infty$. 
\begin{lemma}\label{lem:dispOrdDirectedUp}
Let $A$ be a non-empty subset of $\mathcal{M}_\infty$.    
If $A$ is countably directed w.r.t.~$\preccurlyeq _{\rm disp}$, then the set $Q:=\{q_\xi\colon \xi\in A\}$ is of bounded variation and satisfies
\[
\sup_{\xi\in A}(q_\xi(v)-q_\xi^+(u))={\rm TV}_{[u,v]}(Q)\quad\mbox{for all }(u,v)\in \Delta.
\]
\end{lemma}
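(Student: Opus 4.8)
The plan is to establish the two inequalities of the claimed identity separately and to deduce bounded variation as a by-product of the same reduction. The engine of the whole argument is the explicit formula for countable suprema in Proposition~\ref{prop:existenceSup}, which identifies the quantile function of $\vee B$ with the running total variation $t\mapsto {\rm TV}_{[0,t]}(\{q_\xi\colon\xi\in B\})$; the role of countable directedness is to guarantee that the relevant countable subfamilies $B\subset A$ are bounded from above, so that this formula applies, together with the additivity ${\rm TV}_{[0,v]}={\rm TV}_{[0,u]}+{\rm TV}_{[u,v]}$.

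To obtain ${\rm TV}_{[0,1]}(Q)<\infty$, I would first reduce to a countable subfamily. Since every $q_\xi$ is non-decreasing, the supremum defining ${\rm TV}_{[0,1]}(Q)$ can be computed along partitions with nodes in a fixed countable dense set $D\subset(0,1)$; selecting, for each such partition, each of its subintervals, and each $k\in\N$, an element of $A$ attaining the inner supremum up to $1/k$, produces a countable $B\subset A$ with ${\rm TV}_{[0,1]}(Q)={\rm TV}_{[0,1]}(\{q_\xi\colon\xi\in B\})$. By countable directedness $B$ is bounded from above, so Proposition~\ref{prop:existenceSup} yields $\eta:=\vee B\in\mathcal{M}_\infty$ with $q_\eta={\rm TV}_{[0,\cdot]}(\{q_\xi\colon\xi\in B\})$, and Proposition~\ref{prop:leftContTV} gives ${\rm TV}_{[0,1]}(\{q_\xi\colon\xi\in B\})=\lim_{t\to1^-}q_\eta(t)=q_\eta(1^-)<\infty$, because $\eta$ is essentially bounded. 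Hence $Q$ is of bounded variation.

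For the identity itself, the inequality $\sup_{\xi\in A}(q_\xi(v)-q_\xi^+(u))\le{\rm TV}_{[u,v]}(Q)$ is immediate: the trivial partition $\{u,v\}$ already gives ${\rm TV}_{[u,v]}(Q)\ge\sup_{\zeta\in A}(q_\zeta(v)-q_\zeta(u))$, and $q_\xi(u)\le q_\xi^+(u)$ finishes the estimate. For the reverse inequality I would fix $(u,v)\in\Delta$, reduce as above to a countable $B\subset A$ with ${\rm TV}_{[u,v]}(Q)={\rm TV}_{[u,v]}(\{q_\xi\colon\xi\in B\})$, and set $\eta:=\vee B$. Additivity together with $q_\eta={\rm TV}_{[0,\cdot]}(\{q_\xi\colon\xi\in B\})$ shows that this total variation equals $q_\eta(v)-q_\eta(u)$. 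Countable directedness provides $\xi^\ast\in A$ dominating all of $B$, hence $\eta\preccurlyeq_{\rm disp}\xi^\ast$, and the quantile characterization \eqref{eq:dispQuantiles} bounds the right increment $q_\eta(v)-q_\eta^+(u)$ by $q_{\xi^\ast}(v)-q_{\xi^\ast}^+(u)\le\sup_{\xi\in A}(q_\xi(v)-q_\xi^+(u))$.

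The hard part, and the step that forces the use of Proposition~\ref{prop:rightIncr}, is the passage from the left-endpoint value $q_\eta(u)$ appearing in ${\rm TV}_{[u,v]}=q_\eta(v)-q_\eta(u)$ to the right quantile $q_\eta^+(u)$ dictated by the dispersive order. One must show that the jump of the running total variation $q_\eta$ at the left endpoint $u$ does not contribute, so that ${\rm TV}_{[u,v]}(\{q_\xi\colon\xi\in B\})$ coincides with the right-increment expression $q_\eta(v)-q_\eta^+(u)$; this is exactly the content I would extract from Proposition~\ref{prop:rightIncr}, together with Proposition~\ref{prop:dispOrderQuant} for the order itself. The remaining ingredient — that a countable subfamily realizes the total variation — is routine, relying only on monotonicity of the $q_\xi$ and the density of $D$, with some care to keep the perturbed nodes in $D$ while preserving each increment.
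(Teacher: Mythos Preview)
Your argument is correct but more circuitous than the paper's. For the reverse inequality ${\rm TV}_{[u,v]}(Q)\le\sup_{\xi\in A}(q_\xi(v)-q_\xi^+(u))$, the paper argues directly by contradiction: if it failed, Proposition~\ref{prop:rightIncr} would furnish a single partition $\pi=\{u=t_0<\cdots<t_n=v\}$ and finitely many $\xi_1,\dots,\xi_n\in A$ with $\alpha(u,v)<\sum_i\big(q_{\xi_i}(t_i)-q_{\xi_i}^+(t_{i-1})\big)$; one upper bound $\xi^\ast\in A$ for these finitely many elements then yields, by telescoping, $\alpha(u,v)<q_{\xi^\ast}(v)-q_{\xi^\ast}^+(u)\le\alpha(u,v)$. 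Your route through the countable reduction $B$ and the auxiliary supremum $\eta=\vee B$ works, but the detour through $\eta$ is avoidable: once $\xi^\ast\in A$ dominates $B$, Proposition~\ref{prop:rightIncr} together with $\xi\preccurlyeq_{\rm disp}\xi^\ast$ for all $\xi\in B$ already gives ${\rm TV}_{[u,v]}(\{q_\xi:\xi\in B\})\le q_{\xi^\ast}(v)-q_{\xi^\ast}^+(u)$ by the same telescoping, without ever invoking $\eta$ or the delicate passage from $q_\eta(u)$ to $q_\eta^+(u)$. The paper's argument thereby isolates that only \emph{finite} directedness is needed for the identity, whereas your approach buries this in the heavier supremum machinery of Proposition~\ref{prop:existenceSup}; for bounded variation both arguments are close in spirit, though the paper's contradiction avoids your explicit reduction to a countable subfamily realizing the full total variation (which, incidentally, follows just from choosing near-optimizers along a sequence of partitions and does not require the density manoeuvre you sketch).
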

\begin{proof}
By contradiction, suppose that ${\rm TV}_{[0,1]}(Q)=\infty$. Then there exists a sequence $(\xi_n)_{n\in\N}\in A$ and partitions $\pi_1\subset \pi_2\subset \cdots$ of $[0,1]$ with $\lim_{n\to\infty} S_\pi(q_{\xi_n})=\infty$. 
 Since $A$ is countably directed, the sequence $(\xi_n)_{n\in\N}$ has a  supremum $\xi^\ast\in\mathcal{M}_\infty$. 
Then
\[
 \infty= \lim_{n\to\infty} S_{\pi_n}(q_{\xi_n})\le \sup_{n\to\infty} S_{\pi_n}(q_{\xi^\ast})\le {\rm TV}_{[0,1]}(q_{\xi^\ast}), 
 \]
in contradiction to $ {\rm TV}_{[0,1]}(q_{\xi^\ast})<\infty$ since $\xi^\ast$ is essentially bounded. This shows that $Q$ is of  bounded variation. 

Define $\alpha(u,v):=\sup_{\xi\in A}(q_\xi(v)-q_\xi^+(u))$.   
For $(u,v)\in \Delta$, it follows from Proposition~\ref{prop:rightIncr} that
\[
\alpha(u,v)\le {\rm TV}_{[u,v]}(Q).
\]
To prove the equality, suppose by contradiction that $\alpha(u,v)< {\rm TV}_{[u,v]}(Q)$. 
Then by Proposition~\ref{prop:rightIncr}, there exists a partition $\pi:=\{u=t_0<t_1<\cdots<t_n=v\}$ with
\[
\alpha(u,v) < \sum_{i=1}^n \sup_{\xi\in A}(q_\xi(t_{i})-q_\xi^+(t_{i-1}))<\sum_{i=1}^n (q_{\xi_i}(t_{i})-q_{\xi_i}^+(t_{i-1}))
\]
for some $\xi_1,\xi_2,\cdots,\xi_n\in A$. 
Since $A$ is countably directed, the exists $\xi^\ast\in A$ such that $\xi_1,\dots,\xi_n \preccurlyeq _{\rm disp}\xi^\ast$. 
Then
\[
\alpha(u,v) < \sum_{i=1}^n (q_{\xi^\ast}(t_{i})-q_{\xi^\ast}^+(t_{i-1}))\le q_{\xi^\ast}(v)-q_{\xi^\ast}^+(u)\le \alpha(u,v),
\]
which is a contradiction. 
\end{proof}

For a function $\psi\colon \mathcal{M}_\infty\to (-\infty,\infty]$ 
with sublevel sets $A_s:=\{\xi\in \mathcal{M}_\infty\colon \psi(\xi)\le s\}$, we define $\alpha_{\min}(s,u,v):=\sup_{\xi\in A_s}(q_\xi(v)-q_\xi^+(u))$ and $\alpha^+_{\min}(s,u,v):=\inf_{s<t}\alpha_{\min}(t,u,v)$ for all $s\in\R$ and $(u,v)\in\Delta$. 

Let $I:=\{s\in\mathbb{R}\colon A_s\neq\emptyset\}$. If $\psi$ has countably upwards directed sublevel sets, it follows from Lemma~\ref{lem:dispOrdDirectedUp} that for every $s\in I$,  the set $Q_s:=\{q_\xi\colon \xi\in A_s\}$ is of bounded variation, and $\alpha_{\min}(s,u,v)=\beta(s,v)-\beta(s,u)$ for all $(u,v)\in \Delta$ where $\beta(s,u):={\rm TV}_{[0,u]}(Q_s)$.  
Moreover, the mapping $\beta\colon\R\times (0,1) \to\R$ is non-decreasing in the second argument and satisfies 
$\beta(s,v)-\beta(s,u)\le \beta(s^\prime,v)-\beta(s^\prime,u)$
for all $s\le s^\prime$. As a direct application of Theorem~\ref{thm:main}, we obtain the following representation result for maxitive functions w.r.t.~the dispersive stochastic order. 

\begin{corollary}\label{cor:maxDisp}
Let $\psi\colon \mathcal{M}_\infty\to(-\infty,\infty]$ be a function with sublevel sets $(A_s)_{s\in\mathbb{R}}$.  
Then, the following conditions are equivalent:
\begin{enumerate}
    \item $\psi$ is monotone and $(A_s)_{s\in\mathbb{R}}$ are countably upwards directed   w.r.t.~$\preccurlyeq _{\rm disp}$.
    \item There exists a function $\beta\colon I\times (0,1)\to \mathbb{R}$ which is non-decreasing in the second argument, and satisfies $\beta(s,v)-\beta(s,u)\le \beta(s^\prime,v)-\beta(s^\prime,u)$ for all $s\le s^\prime$ and $(u,v)\in \Delta$. Moreover, it holds 
    \[
    \psi(\xi)=\inf\left\{ s\in \mathbb{R} \colon \sup_{(u,v)\in \Delta}\{q_\xi(v)-q_\xi(u^+)-\alpha(s,u,v)\}\le 0\right\}\quad\mbox{ for all }\xi\in\mathcal{M}_\infty,
    \]
    where $\alpha(s,u,v):=\beta(s,v)-\beta(s,u)$ for all $s\in I$, and $\alpha(s,u,v):=\infty$ otherwise. 
    The infimum is attained for all~$\xi\in\mathcal{M}_\infty$ with $\psi(\xi)<\infty$. 
\item For every $s\in I$, the set $Q_s:=\{q_\xi\colon \xi\in A_s\}$ is of bounded variation and $\alpha_{\min}(s,u,v)={\rm TV}_{[u,v]}(Q_s)$ for all $(u,v)\in \Delta$. Moreover, it holds
    \[
\psi(\xi)=\sup_{(u,v)\in \Delta}G(q_\xi(v)-q_\xi^+(u),u,v)\quad\mbox{ for all }\xi\in \mathcal{M}_\infty,
\]
where  $G(t,u,v):=\inf\{s\in\mathbb{R}\colon t\le \alpha_{\min}^+(s,u,v)\}$ for all $(t,u,v)\in \mathbb{R}\times \Delta$.
\end{enumerate}
\end{corollary}

We conclude with an example of a maxitive function for the dispersive order.

\begin{example}
Consider the function \(\beta\colon [0,\infty) \times (0,1) \to \mathbb{R}\), defined by \(\beta(s,u) := s u\).  The function  
\[
\psi(\xi) := \sup_{(u,v) \in \Delta} \frac{q_{\xi}(v) - q_{\xi}(u^+)}{v - u} \in [0,\infty]
\]  
satisfies the equivalent conditions of Corollary~\ref{cor:maxDisp} and is therefore bounded countably maxitive on $\mathcal{M}_\infty/_\sim$.
The value \(\psi(\xi)\) corresponds to the Lipschitz constant of \(q_\xi\). In particular, if \(q_\xi\) is differentiable, it holds that  
\[
\psi(\xi) = \sup_{s \in (0,1)} q_\xi^\prime(s).
\]  
In reliability analysis, the derivative \(q_\xi^\prime(s)\) is referred to as the \emph{quantile density function} and measures the rate of change of the quantile function; see \cite{unnikrishnan}.  When \(\xi\) models the uncertain time to failure of a system, \(q_\xi(u)\) corresponds to the time at which the survival probability of the system is \(1 - u\). A high value of \(q_\xi^\prime(u)\) implies that small reductions in survival probability correspond to larger increases in time, indicating a slower rate of reliability decline over time. Consequently, \(\psi(\xi)\) acts as a measure of the speed at which the reliability of the system decreases, capturing the steepest rate of decline.  
\end{example}

\section{Conclusion}
Maxitive functions are widely used in decision-making under risk and uncertainty, particularly for worst-case or best-case evaluations. In this paper, we extend the concept of maxitive functions to a broad class of preorders, providing a unified framework that goes beyond the classical pointwise order. Specifically, our results address the main stochastic orders in the literature, including the usual stochastic order, the increasing convex order, and the dispersive order. 

By transitioning from the pointwise order to weaker stochastic orders, we develop a framework that covers a broader class of maxitive functions. Our main result is a representation theorem for bounded countably maxitive functions. In the translation-invariant case, we obtain representation results in terms of penalized worst-case evaluations. Notably, this generalization also includes widely used decision-making tools such as the value at risk.

\begin{appendix}
\section{Auxiliary results for maxitive risk measures on $C_b(S)$}
Let $(S,d)$ be a Polish space, and 
denote by $\mathcal{X}$ a convex cone of functions $f\colon S\to \mathbb{R}$ with $C_b(S)\subset \mathcal{X}$. Here,
$C_b(S)$ denotes the space of all bounded continuous functions. 
The set $\mathcal{X}$ is endowed with the pointwise order.
For a function $\psi\colon\mathcal{X}\to(-\infty,\infty]$, 
we frequently deal with the following properties:
\begin{itemize}
\item[(T)] $\psi(f+c)=\psi(f)+c$ for all $c\in\R$.
\item[(M)] $\psi(f)\le \psi(g)$ whenever $f\le g$.
\item[(C)] $\psi(\lambda f + (1-\lambda)g)\le \lambda\psi( f ) + (1-\lambda)\psi(g)$ for all $\lambda\in[0,1]$.
\end{itemize}
The function $\psi$ is \emph{continuous from below}  
if $\psi(f)=\lim_{n\to\infty}\psi(f_n)$ for all sequences $(f_n)_{n\in\mathbb N}\subset\mathcal{X}$ with $f_n\le f_{n+1}$ for all $n\in\mathbb{N}$ and $f=\sup_{n\in\mathbb{N}} f_n\in\mathcal{X}$.

For $\psi\colon \mathcal{X}\to(-\infty,\infty]$, we consider the \emph{acceptance set} $\mathcal{A}:=\{f\in\mathcal{X}\colon \psi(f)\le 0\}$, and 
define the \emph{convex conjugate} $\psi_{C_b(S)}^\ast\colon {\rm ca}_1^+(S)\to[0,\infty]$ by
\[
\psi^\ast_{C_b(S)}(\mu):=\sup_{f\in C_b(S)}\left\{\int f\, {\rm d}\mu - \psi(f) \right\},
\]
where ${\rm ca}_1^+(S)$ denotes the set of all Borel probability measures on $S$. 
In the particular, for the Dirac measure $\delta_x$, we get
\[
I_{\min}(x):=\psi^\ast_{C_b(S)}(\delta_x)=
\sup_{f\in C_b(S)}\{f(x)-\psi(f)\}\quad\mbox{for all }x\in S.
\]
The following result was recently shown by Delbaen \cite{delbaen,delbaen2}; see also \cite{max} for an alternative proof. 

\begin{theorem}\label{lem:Delbaen}
Let $\psi\colon C_b(S)\to\mathbb{R}$ be a function satisfying (T), (M) and (C). 
Then, the following statements are
equivalent:
\begin{itemize}
\item[(i)] $\psi$ is continuous from below.
\item[(ii)] $\psi$ admits the representation
\[
\psi(f)=\sup_{\mu\in {\rm ca}_1^+(S)}\left\{\int f\, {\rm d}\mu - \psi_{C_b(S)}^\ast(\mu)\right\}.
\]
\end{itemize}
\end{theorem}
In combination with \cite[Theorem 2.1]{kupper}, we obtain the following result. Whereas the representation result in \cite{kupper} relies on continuity from above, here we rely on the weaker condition of continuity from below, which is guaranteed by countable maxitivity.
\begin{theorem}\label{thm:maxCb}
Let $\psi\colon C_b(S)\to\mathbb{R}$ be a function satisfying (T) and (M). 
Then, the following statements are
equivalent:
\begin{itemize}
\item[(i)]  $\psi$ is countably maxitive, i.e., $\psi(f)=\sup_{n\in\N}\psi(f_n)$ for every  $(f_n)_{n\in\mathbb{N}}\subset  C_b(S)$ with $f=\sup_{n\in\N}f_n\in  C_b(S)$.
\item[(ii)] $\sup_{n\in\N} f_n\in \mathcal{A}$ for every  $(f_n)_{n\in\mathbb{N}}\subset \mathcal{A}$ with $\sup_{n\in\N} f_n\in C_b(S)$. 
\item[(iii)] $\psi$ admits the representation
\[
\psi(f)=\sup_{x\in S}\left\{f(x) - I_{\min}(x)\right\}\quad\mbox{for all }f\in C_b(S).
\]

\item[(iv)] $\psi$ satisfies (C) and is continuous from below. Moreover, 
\[
\psi_{C_b(S)}^\ast(\mu)
=\int I_{\min}(x)\,\mu({\rm d}x) \quad\mbox{for all }\mu\in {\rm ca}_1^+(S).
\]
\item[(v)] There exists a function $\gamma\colon S\to[0,\infty]$ such that 
\[
\mathcal{A}=\{f\in C_b(S)\colon f\le \gamma\}.
\]
\end{itemize}
\end{theorem}
\begin{proof}
The argumentation is similar to that in \cite[Theorem 2.1]{kupper}. Similar to the proof of \cite[Theorem 2.1]{kupper}, it holds
\begin{equation}\label{eq:esssup}
\psi^\ast(\delta_{\cdot})
=\esssup_\mu\mathcal{A}\quad\mbox{$\mu$-almost surely}
\end{equation}
for all $\mu\in {\rm ca}_1^+(S)$, where $\esssup_\mu\mathcal{A}$ denotes the essential supremum of $\mathcal{A}$ with respect to the probability measure $\mu$.

$(i) \Rightarrow(ii)$:  For \( (f_n)_{n \in \mathbb{N}} \subset \mathcal{A} \) with \( \sup_{n \in \mathbb{N}} f_n \in C_b(S) \), we have 
\[
\psi\Big(\sup_{n \in \mathbb{N}} f_n\Big) = \sup_{n \in \mathbb{N}} \psi(f_n) \leq 0,
\]
and therefore \( \sup_{n \in \mathbb{N}} f_n \in \mathcal{A} \).

$(ii) \Rightarrow(i)$: Let \( (f_n)_{n \in \mathbb{N}} \subset C_b(S) \) with \( f := \sup_{n \in \mathbb{N}} f_n \in C_b(S) \). 
Since by (T), \( f_n - \sup_{m \in \mathbb{N}} \psi(f_m) \in \mathcal{A} \) for all \( n \in \mathbb{N} \), we obtain that \( f - \sup_{m \in \mathbb{N}} \psi(f_m) \in \mathcal{A} \). By (T) and (M), it holds
\[
\psi(f) \leq \sup_{n \in \mathbb{N}} \psi(f_n) \leq \psi(f).
\]

$(i)\Rightarrow(iv):$ Condition (i) ensures that \( \psi \) is (finitely) maxitive and continuous from below. In particular, since \( \psi \) is maxitive, it satisfies (C) due to \cite[Proposition 2.1]{kupper}.  
Fix \( \mu \in {\rm ca}_1^+(S) \).  
Since \( \mathcal{A} \) is directed upwards, there exists a sequence \( (f_n)_{n \in \mathbb{N}} \subset \mathcal{A} \) with \( f_n \leq f_{n+1} \) for all \( n \in \mathbb{N} \) such that \( f_n \nearrow \esssup_\mu \mathcal{A} \) \( \mu \)-almost surely.  
By \eqref{eq:esssup}, we have that \( \esssup_\mu \mathcal{A} = I_{\min} \) \( \mu \)-almost surely. Hence, by monotone convergence,  
\[
\psi_{C_b(S)}^\ast(\mu) = \sup_{f \in \mathcal{A}} \int f \, {\rm d}\mu \geq \sup_{n \in \mathbb{N}} \int f_n \, {\rm d}\mu =
\int I_{\min}(x) \, \mu({\rm d}x).
\]
Moreover,
\[
\psi_{C_b(S)}^\ast(\mu) = \sup_{f \in \mathcal{A}} \int f \, {\rm d}\mu \leq \int \esssup_\mu \mathcal{A} \, {\rm d}\mu = \int I_{\min}(x) \, \mu({\rm d}x).
\]

$(iv)\Rightarrow(iii):$ For every $f\in C_b(S)$, it follows from   Theorem~\ref{lem:Delbaen} that 
\begin{align*}
\psi(f)&=\sup_{\mu\in {\rm ca}_1^+(S)}\left\{\int f\, {\rm d}\mu - \psi_{C_b(S)}^\ast(\mu)\right\}=\sup_{\mu\in {\rm ca}_1^+(S)}\left\{\int f\, {\rm d}\mu - \int I_{\min}(x)\,\mu({\rm d}x)\right\}\\
&=\sup_{\mu\in {\rm ca}_1^+(S)}\left\{\int f(x) - I_{\min}(x)\,\mu({\rm d}x)\right\}=\sup_{x\in S}\left\{f(x) - I_{\min}(x)\right\}.
\end{align*}

$(iii)\Rightarrow(v):$ From the representation, $f\in\mathcal{A}$ if and only if $f(x)\le I_{\min}(x)$ for all $x\in S$.
Hence, $\mathcal{A}=\{f\in C_b(S)\colon f\le \gamma\}$ with 
$\gamma=I_{\min}$. 

$(v)\Rightarrow(ii):$ Let $(f_n)_{n\in\mathbb{N}}\subset\mathcal{A}$ with $\sup_{n\in\N} f_n\in C_b(S)$. Since $f_n\le\gamma$ for all $n\in\N$,
it follows that $\sup_{n\in\N} f_n\le \gamma$. 
This shows that $f\in\mathcal{A}$. 
\end{proof}

We denote by $L(S)$ the set of all lower semicontinuous functions $f\colon S\to \mathbb{R}$.
\begin{corollary}\label{cor:repMax2}
Suppose that $\psi\colon {L}(S)\to (-\infty,\infty]$ satisfies $\psi(0)\in\mathbb{R}$, properties (T) and (M), and that $\psi(f)=\sup_{n\in\mathbb{N}}\psi(f_n)$ for every sequence $(f_n)_{n\in\mathbb{N}}\subset L(S)$ such that $f=\sup_{n\in\mathbb{N}}f_n\in L(S)$. Then, 
\[
\psi(f)=\sup_{x\in S}\{f(x)-I_{\min}(x)\} \quad \text{for all } f\in {L}(S).
\]
\end{corollary}

\begin{proof}
Since $\psi(0)\in\mathbb{R}$, property (T) implies that the restriction of $\psi$ to $C_b(S)$ is real-valued. Thus, by Theorem~\ref{thm:maxCb}, we have
\[
\psi(f)=\sup_{x\in S}\{f(x)-I_{\min}(x)\} \quad \text{for all } f\in C_b(S).
\]

Now, let \( f \in L(S) \) be bounded from below. In this case, there exists a sequence \((f_n)_{n \in \mathbb{N}} \subset C_b(S)\) such that \( f_n \nearrow f \). For instance, the sequence defined by
\[
f_n(x) := \inf\left\{ f(y) + n d(x,y) : y \in S \right\} \wedge n
\]
satisfies this property. Since \(\psi\) is continuous from below, we have
\begin{align*}
\psi(f) &= \sup_{n \in \mathbb{N}} \psi(f_n) = \sup_{n \in \mathbb{N}} \sup_{x \in S} \{f_n(x) - I_{\min}(x)\} \\
&= \sup_{x \in S} \sup_{n \in \mathbb{N}} \{f_n(x) - I_{\min}(x)\} = \sup_{x \in S} \{f(x) - I_{\min}(x)\}.
\end{align*}

Finally, consider a general \( f \in L(S) \). Since \(\psi\) is maxitive, for every \( n \in \mathbb{N} \), we have
\begin{align*}
\psi(f) \vee \psi(-n) &= \psi(f \vee (-n)) \\
&= \sup_{x \in S} \{ (f(x) \vee (-n)) - I_{\min}(x) \} \\
&= \sup_{x \in S} \{ (f(x) - I_{\min}(x)) \vee (-n - I_{\min}(x)) \}.
\end{align*}
Letting \( n \to \infty \), we obtain that
\(\psi(f) = \sup_{x \in S} \{f(x) - I_{\min}(x)\}\).
\end{proof}

\section{Auxiliary results for  quantile functions}\label{sec:appendixC}

Suppose that $(\Omega,\mathcal{F},\mathbb{P})$ is an atomless probability space. Recall that $\mathcal{M}$ denotes the set of all random variables $\xi\colon \Omega\to\R$, where two of them are identified if they have the same law. Given $\xi\in\mathcal{M}$, we define the cumulative distribution function by $F_\xi(x):=\mathbb{P}(\xi\le x)$ and set $F_\xi(x^-)=\sup_{y<x} F_\xi(y)$ for all $x\in\R$. Moreover, the quantile function $q_\xi\colon (0,1)\to\R$ 
and upper quantile function $q_\xi^+\colon (0,1)\to\R$
are defined by
\[
q_\xi(t):=\inf\{x\in\mathbb{R}\colon t\le F_\xi(x)\}
\quad\mbox{and}\quad
q_\xi^+(t):=\inf\{x\in\mathbb{R}\colon t< F_\xi(x)\}.
\]
\begin{lemma}\label{lem:basicQuantile}
For all $\xi\in\mathcal{M}$ and $t\in(0,1)$, the following statements hold:
\begin{itemize}
\item[(i)] $[q_\xi(t),\infty)=\{x\in\mathbb{R}\colon t\le F_\xi(x)\}$.
\item[(ii)] $(-\infty,q^+_\xi(t)]=\{x\in \mathbb{R}\colon t\ge F_\xi(x^-)\}$.
\item[(iii)] $\sup_{s<t}q_\xi(s)=\sup_{s<t}q^+_\xi(s)=q_\xi(t)$.
\item[(iv)] $\inf_{s>t}q_\xi(s)=\inf_{s>t}q^+_\xi(s)=q^+_\xi(t)$.
\item[(v)] The function $q_\xi\colon(0,1)\to\R$ is left-continuous.
\item[(vi)] The function $q_\xi^+\colon(0,1)\to\R$ is right-continuous.
\item[(vii)] The function $(s,r)\mapsto q_\xi(r)-q_\xi^+(s)$ is lower semicontinuous on $(0,1)^2$.
\end{itemize}
\end{lemma}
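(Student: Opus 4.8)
The plan is to prove the two exact level-set identities (i) and (ii) first, since all of (iii)--(vii) flow from them together with the monotonicity of $q_\xi$ and $q_\xi^+$ in $t$. Throughout I would use only that $F_\xi$ is non-decreasing and right-continuous, and that $F_\xi(x^-)=\sup_{y<x}F_\xi(y)$. For (i), the set $B_t:=\{x\colon t\le F_\xi(x)\}$ is an up-set because $F_\xi$ is non-decreasing, hence a half-line with left endpoint $\inf B_t=q_\xi(t)$; it remains to show the endpoint lies in $B_t$, i.e.\ $t\le F_\xi(q_\xi(t))$. I would obtain this by picking $x_n\downarrow q_\xi(t)$ with $x_n\in B_t$ and invoking right-continuity, so that $F_\xi(q_\xi(t))=\lim_n F_\xi(x_n)\ge t$. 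Thus $B_t=[q_\xi(t),\infty)$. For (ii), I would rewrite the condition: since $F_\xi(x^-)>t$ holds exactly when there is $y<x$ with $F_\xi(y)>t$, and since $C_t:=\{y\colon F_\xi(y)>t\}$ is an up-set with $\inf C_t=q_\xi^+(t)$, the ``$\exists\,y<x$ in $C_t$'' condition describes the \emph{open} half-line $(q_\xi^+(t),\infty)$ regardless of whether the endpoint of $C_t$ belongs to $C_t$. Complementing gives $\{x\colon F_\xi(x^-)\le t\}=(-\infty,q_\xi^+(t)]$.

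Next I would deduce the one-sided limit formulas (iii) and (iv). First note $q_\xi\le q_\xi^+$ pointwise (since $\{t<F_\xi\}\subseteq\{t\le F_\xi\}$) and both are non-decreasing in $t$. For (iii), the elementary observation that any $x$ with $t\le F_\xi(x)$ satisfies $s<F_\xi(x)$ for $s<t$ yields $q_\xi^+(s)\le q_\xi(t)$, giving the sandwich $\sup_{s<t}q_\xi(s)\le\sup_{s<t}q_\xi^+(s)\le q_\xi(t)$. The reverse inequality $\sup_{s<t}q_\xi(s)\ge q_\xi(t)$ I would argue by contradiction using (i): if $q_\xi(s)\le L<q_\xi(t)$ for all $s<t$, then $s\le F_\xi(L)$ for all such $s$, hence $t\le F_\xi(L)$, forcing $q_\xi(t)\le L$. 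Part (iv) is the mirror image via (ii): for $s>t$ one checks $q_\xi(s)\ge q_\xi^+(t)$, and a contradiction argument producing some $s\in(t,F_\xi(x_0))$ closes the reverse inequality.

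Parts (v) and (vi) are then immediate, since a non-decreasing function is left-continuous (resp.\ right-continuous) exactly when it agrees with its left (resp.\ right) limit everywhere, which is what (iii) and (iv) assert for $q_\xi$ and $q_\xi^+$. For (vii), I would invoke the standard fact that a non-decreasing, left-continuous real function is lower semicontinuous, so that $r\mapsto q_\xi(r)$ is lower semicontinuous; dually, $q_\xi^+$ being non-decreasing and right-continuous makes $-q_\xi^+$ lower semicontinuous. Composing with the continuous coordinate projections, both $(s,r)\mapsto q_\xi(r)$ and $(s,r)\mapsto -q_\xi^+(s)$ are lower semicontinuous on $(0,1)^2$, and their sum is lower semicontinuous because both are finite-valued, so no $\infty-\infty$ indeterminacy arises.

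I expect the only delicate point to be part (ii), where the bookkeeping around left limits and the open-versus-closed endpoint of the set $\{F_\xi>t\}$ must be handled with care; everything else is a routine consequence of monotonicity, right-continuity, and the half-line descriptions (i)--(ii).
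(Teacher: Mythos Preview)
Your proposal is correct and follows essentially the same strategy as the paper: establish the half-line identities (i) and (ii) from monotonicity and right-continuity of $F_\xi$, then derive (iii)--(iv), and read off (v)--(vii). The only cosmetic difference is that for (iii) and (iv) the paper packages the argument as an intersection of half-lines, e.g.\ $\bigcap_{s<t}[q_\xi(s),\infty)=\bigcap_{s<t}F_\xi^{-1}([s,\infty))=F_\xi^{-1}([t,\infty))=[q_\xi(t),\infty)$, whereas you reach the same conclusion by a short contradiction; the content is identical.
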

\begin{proof}
(i) and (ii) follow from the definitions of $q$ and $q^+$, using that $F_\xi$ is non-decreasing and right-continuous.

For $a:=\sup_{s<t}q_\xi(s)$, it follows from (i) that
 \[
     [a,\infty)=\underset{s<t}\bigcap [q_\xi(s),\infty)
     =\underset{s<t}\bigcap F_\xi^{-1}([s,\infty)])
     = F_\xi^{-1}([t,\infty)])
     =[q_\xi(t),\infty),
 \]
 where $F_\xi^{-1}([s,\infty))=\{x\in\mathbb{R}\colon s\le F_\xi(x)\}$. 
 This shows $\sup_{s<t}q_\xi(s)=q_\xi(t)$.  
 For $s<t$, it holds $q_\xi(s)\le q_\xi^+(s)\le q_\xi(t)$, so that 
 $
 q_\xi(t)=\sup_{s<t}q_\xi(s)\le \sup_{s<t}q_\xi^+(s) \le q_\xi(t)$, which 
shows (iii). 

 Define $G(s):=F_\xi(s^-)$ for all $s\in\mathbb{R}$. For $b:=\inf_{s>t}q_\xi^+(s)$, it follows from (ii) that
 \[
 (-\infty,b]=\underset{s>t}\bigcap (-\infty,q_\xi^+(s)]
     =\underset{s>t}\bigcap G^{-1}((-\infty,s])
     = G^{-1}((-\infty,t])
     =(-\infty,q_\xi^+(t)],
 \]
 where $G^{-1}([s,\infty))=\{x\in\mathbb{R}\colon s\le G(x)\}$.
 This shows $\inf_{s>t}q_\xi^+(s)=q_\xi^+(t)$. 
 For $s>t$, it holds $q_\xi^+(s)\ge q_\xi(s)\ge q_\xi^+(t)$, so that 
 $q_\xi^+(t)= \inf_{s>t}q_\xi^+(s)\ge \inf_{s>t}q_\xi(s)\ge q_\xi^+(t)$, which shows (iv).
 
 Finally, (v), (vi) and (vii) are consequences of (iii) and (iv). 
\end{proof}

\begin{lemma}\label{prop:dispOrderQuant}
For $\xi,\eta\in\mathcal{M}$, the following statements are equivalent:
\begin{itemize}
    \item[(i)] $q_\xi(u)-q_\xi(t)\le q_\eta(u)-q_\eta(t)$ for all $0<t\le u<1$.
    \item[(ii)] $q_\xi(u)-q_\xi^+(t)\le q_\eta(u)-q_\eta^+(t)$ for all $0<t< u<1$.
\end{itemize}
\end{lemma}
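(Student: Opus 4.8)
The plan is to reformulate both conditions in terms of the difference of quantile functions and then exploit the one-sided limit relations collected in Lemma~\ref{lem:basicQuantile}. Set $g(t):=q_\eta(t)-q_\xi(t)$ for $t\in(0,1)$. After rearranging, condition (i) says precisely that $g$ is non-decreasing on $(0,1)$, while condition (ii) says that $q_\eta^+(t)-q_\xi^+(t)\le g(u)$ for all $0<t<u<1$. Throughout I would use that $q_\xi,q_\eta$ are non-decreasing and left-continuous, that $q_\xi^+,q_\eta^+$ are non-decreasing, and the identities $q_\xi^+(t)=\inf_{s>t}q_\xi(s)$ and $q_\xi(t)=\sup_{s<t}q_\xi^+(s)$ from Lemma~\ref{lem:basicQuantile}(iii)--(iv) (and the analogous ones for $\eta$).

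For the implication (i)$\Rightarrow$(ii), fix $0<t<u<1$ and assume $g$ is non-decreasing. For every $s\in(t,u)$, monotonicity of $g$ gives $q_\eta(s)=q_\xi(s)+g(s)\le q_\xi(s)+g(u)$. Taking the infimum over $s\in(t,u)$ and using that, by monotonicity of the quantile functions together with Lemma~\ref{lem:basicQuantile}(iv), $\inf_{t<s<u}q_\eta(s)=q_\eta^+(t)$ and $\inf_{t<s<u}q_\xi(s)=q_\xi^+(t)$, I obtain $q_\eta^+(t)\le q_\xi^+(t)+g(u)$, which rearranges to $q_\xi(u)-q_\xi^+(t)\le q_\eta(u)-q_\eta^+(t)$, i.e.\ (ii).

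For (ii)$\Rightarrow$(i), fix $0<t<u<1$ (the case $t=u$ being trivial). Condition (ii) applied to the pair $(s,u)$ for any $s<u$ reads $q_\eta^+(s)\le q_\xi^+(s)+g(u)$. Taking the supremum over $s\in(0,t)$ and using $\sup_{s<t}q_\eta^+(s)=q_\eta(t)$ and $\sup_{s<t}q_\xi^+(s)=q_\xi(t)$ from Lemma~\ref{lem:basicQuantile}(iii), I get $q_\eta(t)\le q_\xi(t)+g(u)$, i.e.\ $g(t)\le g(u)$; hence $g$ is non-decreasing, which is (i).

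The only delicate point is that one cannot simply pass the infimum (or supremum) through the difference $q_\eta-q_\xi$, since $\inf(a-b)\neq\inf a-\inf b$ in general. The manoeuvre that makes the argument work is to isolate one of the two quantile differences as an additive constant $g(u)$ inside the sup/inf, so that only a single monotone quantile function remains to be pushed through the limit; the identities of Lemma~\ref{lem:basicQuantile}(iii)--(iv) then convert those one-sided limits into the $q^+$ (resp.\ $q$) values. A minor check is that, by monotonicity, the infimum over the truncated range $(t,u)$ coincides with the infimum over all $s>t$, so replacing $q_\xi^+(t)=\inf_{s>t}q_\xi(s)$ by $\inf_{t<s<u}q_\xi(s)$ is harmless.
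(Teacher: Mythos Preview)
Your proof is correct and follows essentially the same approach as the paper: both directions hinge on the one-sided limit identities of Lemma~\ref{lem:basicQuantile}(iii)--(iv), applied by letting the lower endpoint approach $t$ from the appropriate side. The only cosmetic difference is that you first isolate the constant $g(u)=q_\eta(u)-q_\xi(u)$ before taking $\inf$/$\sup$ of a single quantile function, whereas the paper simply lets $s\to t^\pm$ directly in the inequality $q_\xi(u)-q_\xi(s)\le q_\eta(u)-q_\eta(s)$ (resp.\ $q_\xi(u)-q_\xi^+(s)\le q_\eta(u)-q_\eta^+(s)$); both manoeuvres are valid since the monotone quantile functions have the required one-sided limits.
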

\begin{proof}
  $(ii)\Rightarrow(i)$:  Let $0<t\le u<1$. For $s\in (0,t)$, it holds
  $q_\xi(u)-q_\xi^+(s)\le q_\eta(u)-q_\eta^+(s)$. Letting $s\to t^-$, 
  it follows from (iii) in Lemma~\ref{lem:basicQuantile} that 
  $$q_\xi(u)-q_\xi(t)\le q_\eta(u)-q_\eta(t).$$
  
  $(i)\Rightarrow(ii)$: Let $0<t< u<1$. For $s\in (t,u)$, it holds
  $q_\xi(u)-q_\xi(s)\le q_\eta(u)-q_\eta(s)$. 
  Letting $s\to t^+$, it follows from (iv) in Lemma~\ref{lem:basicQuantile} that
  $q_\xi(u)-q_\xi^+(t)\le q_\eta(u)-q_\eta^+(t)$.
\end{proof}

\begin{lemma}\label{lem:basicQuantile2}
If $q\colon (0,1)\to \mathbb{R}$ is non-decreasing and left-continuous, then there exists $\xi\in\mathcal{M}$  such that $q(u)=q_\xi(u)$ for all $u\in(0,1)$.  
\end{lemma}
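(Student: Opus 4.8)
The plan is to realize $q$ as the quantile function of a random variable built from a uniform one, and then to verify $q_\xi = q$ by a direct computation. Since $(\Omega,\mathcal{F},\mathbb{P})$ is atomless, it carries a random variable $U\colon\Omega\to(0,1)$ with uniform distribution; as $q$ is non-decreasing it is Borel measurable, so $\xi:=q(U)$ is a well-defined real-valued element of $\mathcal{M}$ (note $\mathbb{P}(U\in\{0,1\})=0$, so $q(U)$ is defined almost surely). It then remains to show $q_\xi=q$.

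First I would compute the cumulative distribution function of $\xi$. For $x\in\mathbb{R}$, writing $B_x:=\{u\in(0,1)\colon q(u)\le x\}$, uniformity of $U$ gives $F_\xi(x)=\mathbb{P}(q(U)\le x)=\lambda(B_x)$, where $\lambda$ denotes Lebesgue measure. The key structural step, and the point where left-continuity enters, is to identify $B_x$ explicitly. Since $q$ is non-decreasing, $B_x$ is a down-set, hence an interval with left endpoint $0$. Setting $a_x:=\sup B_x$ (with $\sup\emptyset:=0$), monotonicity shows $q(u)\le x$ for every $u<a_x$, and then left-continuity yields $q(a_x)=\lim_{u\uparrow a_x}q(u)\le x$, so that $a_x\in B_x$ whenever $a_x\in(0,1)$. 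Consequently $B_x=(0,a_x]$ and $F_\xi(x)=\lambda(B_x)=a_x$ in every case.

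From this description I would extract the equivalence
\[
t\le F_\xi(x)\quad\Longleftrightarrow\quad q(t)\le x\qquad\text{for all }t\in(0,1),\ x\in\mathbb{R}.
\]
Indeed, if $q(t)\le x$ then $t\in B_x$, whence $F_\xi(x)=\sup B_x\ge t$; conversely, if $F_\xi(x)=a_x\ge t$ then $t\in(0,a_x]=B_x$, i.e.\ $q(t)\le x$. Equivalently, this is the assertion $[q_\xi(t),\infty)=\{x\colon t\le F_\xi(x)\}$ of Lemma~\ref{lem:basicQuantile}(i), with the right-hand side now computed as $[q(t),\infty)$. Taking infima finally gives
\[
q_\xi(t)=\inf\{x\in\mathbb{R}\colon t\le F_\xi(x)\}=\inf\{x\in\mathbb{R}\colon q(t)\le x\}=q(t),
\]
which is the claim.

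The routine parts are the existence of the uniform $U$ (standard on atomless spaces) and the infimum computations at the end. The one genuinely delicate point is the identification $B_x=(0,a_x]$: the inclusion of the right endpoint $a_x$ is exactly what may fail for a merely non-decreasing $q$ and is rescued by left-continuity, and it is precisely this closedness that makes the displayed equivalence, and hence $q_\xi=q$, hold. Some care is also needed with the boundary cases $a_x=0$ (empty $B_x$) and $a_x=1$ (the full interval $(0,1)$), which I would check separately but which present no real difficulty.
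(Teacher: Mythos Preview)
Your proof is correct and follows essentially the same approach as the paper: construct $\xi=q(U)$ from a uniform $U$ (available on atomless spaces) and verify $q_\xi=q$ by analyzing the set $\{s\in(0,1):q(s)\le x\}$, with left-continuity supplying the crucial inclusion of the right endpoint. The paper packages the verification slightly differently---it defines $F(x):=0\vee\sup\{s:q(s)\le x\}$ (your $a_x$), records the two properties $q(F(x))\le x$ and $F(q(s))\ge s$, and uses these to sandwich $q_\xi(u)$ between $q(u)$ and itself---but the content is identical to your equivalence $t\le F_\xi(x)\Leftrightarrow q(t)\le x$, and your direct identification $F_\xi(x)=a_x$ is arguably the cleaner presentation.
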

\begin{proof}
 If $(\Omega,\mathcal{F},\mathbb{P})$ is atomless, then there exists a uniformly distributed random variable $U$ with values in $(0,1)$ defined on  $(\Omega,\mathcal{F},\mathbb{P})$, see e.g.~\cite[Theorem 1]{delbaen0} and \cite[Proposition A.27]{follmer}.  
 Then $\xi=q(U)$ is as desired. To that end, define $F\colon \mathbb{R}\to [0,1]$ by
 \[
 F(x):=0\vee\sup\{s\in(0,1)\colon x\ge q(s)\},
 \]
 which satisfies
 \begin{itemize}
     \item[(1)] $q(F(x))\le x$ for all $x\in \mathbb{R}$ (where we set $q(0):=q(0^+)$ and $q(1):=q(1^-)$),
     \item[(2)] $F(q(s))\ge s$ for all $s\in (0,1)$.
 \end{itemize}   
 For $x\in\mathbb{R}$, it follows from (2) that $\{q(U)\le x \}\subset \{F(q(U))\le F(x)\}\subset \{U\le F(x)\}$. Hence, for every $u\in(0,1)$,
 \begin{align*}
  q_\xi(u)&=\inf\{x\in\mathbb{R}\colon \mathbb{P}(q(U)\le x)\ge u\}\\
  &\ge \inf\{x\in\mathbb{R}\colon \mathbb{P}(U\le F(x))\ge u\}\\
  &=\inf\{x\in\mathbb{R}\colon F(x)\ge u\}.
 \end{align*}
 Let $a:=\inf\{x\in\mathbb{R}\colon F(x)\ge u\}$, and $x_n\downarrow a$ with $F(x_n)\ge u$. 
 It follows from (1) that $x_n\ge q(u)$, and therefore $q_\xi(u)\ge a\ge q(u)$
 by letting $n\to\infty$. 

 As for the other inequality, we have that $\mathbb{P}(q(U)\le q(u))\ge \mathbb{P}(U\le u)=u$. Hence, it follows from the definition of $q_\xi(u)$ that $q_\xi(u)\le q(u)$.
\end{proof}

Recall that for $\xi\in\mathcal{M}_1$, the integrated quantile function $Q_\xi\colon (0,1)\to\mathbb{R}$ is defined by
\[
Q_\xi(u):=\int_u^1 q_\xi(v)\,{\rm d}v.
\]
\begin{lemma}\label{lem:basicQuantile3}
    If $Q\colon (0,1)\to \mathbb{R}$ is concave and satisfies $Q(0^+)\in\mathbb{R}$ and $Q(1^-)=0$, then there exists $\xi\in\mathcal{M}_1$ such that $Q(u)=Q_\xi(u)$ for all $u\in(0,1)$.
\end{lemma}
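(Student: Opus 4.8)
The plan is to reconstruct a quantile function from $Q$ by differentiation and then invoke Lemma~\ref{lem:basicQuantile2}. Since $Q$ is concave and finite on $(0,1)$, the function $f:=-Q$ is convex and finite on $(0,1)$, hence locally Lipschitz there; in particular its left derivative $q:=f'_-$ exists and is real-valued at every $u\in(0,1)$. Because $f$ is convex, $q$ is non-decreasing, and by the standard regularity of one-sided derivatives of convex functions $q$ is left-continuous. Thus $q\colon(0,1)\to\R$ is a bona fide quantile function, and Lemma~\ref{lem:basicQuantile2} yields some $\xi\in\mathcal{M}$ with $q_\xi=q$.

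Next I would identify $Q$ with $Q_\xi$. Convexity of $f$ gives absolute continuity on every compact subinterval $[a,b]\subset(0,1)$, so that $f(b)-f(a)=\int_a^b f'(v)\,{\rm d}v=\int_a^b q(v)\,{\rm d}v$, using that $f'=f'_-=q$ off a countable set. Rewritten for $Q$, this reads $\int_a^b q(v)\,{\rm d}v=Q(a)-Q(b)$ for all $0<a\le b<1$. Fixing $a=u$ and letting $b\uparrow 1$, the hypothesis $Q(1^-)=0$ gives $\int_u^1 q(v)\,{\rm d}v=Q(u)$, where the improper integral converges since $q$ is monotone and the partial integrals $Q(u)-Q(b)$ converge. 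By the definition of $Q_\xi$ this says $Q_\xi(u)=Q(u)$ for every $u\in(0,1)$.

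Finally I would check that $\xi\in\mathcal{M}_1$, i.e.\ that $\int_0^1|q(v)|\,{\rm d}v<\infty$. Letting $a\downarrow 0$ in the identity $\int_a^b q(v)\,{\rm d}v=Q(a)-Q(b)$ and using $Q(0^+)\in\R$ shows that $\int_0^b q(v)\,{\rm d}v$ converges; combined with the convergence near $1$, this yields convergence of $\int_0^1 q$, and since $q$ is monotone (so that its positive and negative parts are each monotone and do not oscillate) the integral is in fact absolutely convergent. Hence $\xi$ has a finite first moment. The main technical point to get right is the passage between the almost-everywhere derivative of $Q$ supplied by the fundamental theorem of calculus and the specific left-continuous representative $q=f'_-$ required to apply Lemma~\ref{lem:basicQuantile2}; once one observes that these agree off a countable set, and that a monotone improper integral converging to a finite limit is absolutely convergent, both the identity $Q=Q_\xi$ and the integrability $\xi\in\mathcal{M}_1$ follow. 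The boundary behaviour at $0$ and $1$, handled purely through the finite values $Q(0^+)$ and $Q(1^-)=0$, is the only place where the finiteness hypotheses on $Q$ enter.
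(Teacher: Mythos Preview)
Your proof is correct and follows essentially the same route as the paper: extract a non-decreasing left-continuous $q$ from the concave function $Q$, apply Lemma~\ref{lem:basicQuantile2} to obtain $\xi$, use $Q(1^-)=0$ to identify $Q=Q_\xi$, and use $Q(0^+)\in\R$ for the first-moment bound. The only cosmetic differences are that the paper cites a textbook integral representation for concave functions and then passes to the left-continuous modification (where you take the left derivative of $-Q$ directly), and that the paper computes $\mathbb{E}[|\xi|]=-Q(0^+)+2Q(F_\xi(0))$ explicitly by splitting at the sign change of $q_\xi$ rather than arguing abstractly from monotonicity.
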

\begin{proof}
    Since $Q$ is concave,  there exist  a non-decreasing function $q\colon (0,1)\to\mathbb{R}$ and $c\in(0,1)$ such that
    \[
    Q(u)=Q(c)-\int_c^u q(v){\rm d}v,
    \]
    see e.g.~\cite[Theorem 12A]{roberts}. By replacing $q$ by its left-continuous modification, we can assume that $q$ is left-continuous.  
    Using that $Q(1^-)=0$, we get that 
    \[
    Q(u)=\int_u^1 q(v)\,{\rm d}v.
    \]
   Since $q$ is left-continuous, it follows from Lemma~\ref{lem:basicQuantile2} that $q=q_\xi$ for some $\xi\in\mathcal{M}$. 
   It remains to show that $\xi\in\mathcal{M}_1$. 
   We have
   \begin{align*}
      \mathbb{E}_{\mathbb{P}}[|\xi|]&=
   \int_0^1 |q_\xi(v)|{\rm d}v\\
   &=-\int_0^{F_\xi(0)} q_\xi(v){\rm d}v+
   \int_{F_\xi(0)}^1 q_\xi(v){\rm d}v\\
   &=-Q(0^+)+ 2 Q(F_\xi(0)) <\infty.    
   \end{align*}
   Hence, $\xi$ has finite first moment, and consequently $\xi\in\mathcal{M}_1$. 
\end{proof}

\section{Auxiliary results for the total variation}\label{sec:appendixB}
 We consider the setting of Subsection~\ref{sec:disp order}.

\begin{proposition}\label{prop:leftContTV}
The mapping  $t\mapsto {\rm TV}_{[0,t]}(Q)$  is non-decreasing and  left-continuous on $(0,1]$.     
\end{proposition}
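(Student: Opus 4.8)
The plan is to prove the two claims separately, as the monotonicity is immediate and the left-continuity is where the real work lies.

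First I would establish monotonicity. The additivity relation ${\rm TV}_{[0,v]}(Q)={\rm TV}_{[0,u]}(Q)+{\rm TV}_{[u,v]}(Q)$ for $0<u\le v<1$ is stated just before the proposition, and since ${\rm TV}_{[u,v]}(Q)\ge 0$ (each summand $\sup_{q\in Q}(q(t_i)-q(t_{i-1}))$ is nonnegative, taking the trivial partition and using that the $q$ are non-decreasing), it follows at once that $t\mapsto {\rm TV}_{[0,t]}(Q)$ is non-decreasing on $(0,1]$.

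For left-continuity, fix $t\in(0,1]$ and set $L:=\lim_{s\to t^-}{\rm TV}_{[0,s]}(Q)$, which exists by monotonicity and satisfies $L\le {\rm TV}_{[0,t]}(Q)$. I would argue the reverse inequality by approximating ${\rm TV}_{[0,t]}(Q)$ from within. Take any real $c<{\rm TV}_{[0,t]}(Q)$. By definition of the total variation as a supremum over partitions, there is a partition $\pi=\{0=t_0<t_1<\cdots<t_n=t\}$ with $S_\pi(Q)>c$, i.e.
\[
\sum_{i=1}^n \sup_{q\in Q}\big(q(t_i)-q(t_{i-1})\big)>c.
\]
The crux is to replace the right endpoint $t_n=t$ by a slightly smaller point $s<t$ while keeping the sum above $c$. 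The only term involving $t$ is the last one, $\sup_{q\in Q}(q(t)-q(t_{n-1}))$, so the main obstacle is controlling this single supremum as the endpoint moves left. Here I would use the left-continuity of each quantile function $q$ at the interior point $t$ (part (v) of Lemma~\ref{lem:basicQuantile}), but one must be careful: the supremum over $q\in Q$ need not be left-continuous even if each $q$ is. The clean way around this is to choose first a single $q^\ast\in Q$ nearly attaining the last supremum, then use left-continuity of that fixed $q^\ast$; since $q^\ast(s)\to q^\ast(t)$ as $s\to t^-$, for $s$ close enough to $t$ the modified partition $\pi_s:=\{t_0<\cdots<t_{n-1}<s\}$ satisfies $S_{\pi_s}(Q)\ge \sum_{i=1}^{n-1}\sup_{q\in Q}(q(t_i)-q(t_{i-1}))+\big(q^\ast(s)-q^\ast(t_{n-1})\big)>c$.

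Since $\pi_s\in\Pi[0,s]$, this gives ${\rm TV}_{[0,s]}(Q)\ge S_{\pi_s}(Q)>c$ for all $s$ sufficiently close to $t$ from below, whence $L\ge c$. As $c<{\rm TV}_{[0,t]}(Q)$ was arbitrary, we conclude $L\ge {\rm TV}_{[0,t]}(Q)$, and combined with $L\le{\rm TV}_{[0,t]}(Q)$ this yields $L={\rm TV}_{[0,t]}(Q)$, which is precisely left-continuity at $t$. The case $t=1$ is handled identically, using $q(1):=q(1^-)$ so that the relevant endpoint value is again a left-limit, ensuring the approximation argument goes through.
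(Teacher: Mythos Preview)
Your argument is correct and follows essentially the same route as the paper: pick a partition $\pi$ of $[0,t]$ that nearly achieves ${\rm TV}_{[0,t]}(Q)$, then slide the right endpoint down to some $s<t$ and use left-continuity of quantile functions to control the last summand.

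One remark on the ``main obstacle'' you flag. You write that the supremum $u\mapsto \sup_{q\in Q}(q(u)-q(t_{n-1}))$ need not be left-continuous even if each $q$ is, and you circumvent this by fixing a near-optimal $q^\ast$. In fact the concern does not arise here: since every $q$ is \emph{non-decreasing} and left-continuous, one has
\[
\sup_{s<t}\,\sup_{q\in Q}\bigl(q(s)-q(t_{n-1})\bigr)
=\sup_{q\in Q}\,\sup_{s<t}\bigl(q(s)-q(t_{n-1})\bigr)
=\sup_{q\in Q}\bigl(q(t)-q(t_{n-1})\bigr),
\]
so the pointwise supremum over $Q$ is itself left-continuous. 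The paper's proof exploits exactly this swap of suprema and thereby avoids your $\varepsilon$-selection of $q^\ast$. Your detour is harmless, just slightly longer than necessary.
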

\begin{proof}
By definition, the mapping $t\mapsto {\rm TV}_{[0,t]}(Q)$ is non-decreasing. 
    Given $t\in (0,1]$, we fix a partition $\pi=\{t_0<t_1<\cdots<t_n\}$ of $[0,t]$.  
    For every $u\in (0,t)$, we consider the partition $\pi_u:=(\pi\cap [0,u])\cup\{u\}\in\Pi[0,u]$. 
    Then since every $q\in Q$ is non-decreasing and left-continuous, we have that
    \begin{align*}
    \sup_{u< t}S_{\pi_u}(Q)&=\sup_{u\in (t_{n-1},t)}S_{\pi_u}(Q)\\
    &=\sum_{i=1}^{n-1} \sup_{q\in Q}(q(t_i)-q(t_{i-1})) + \sup_{u\in (t_{n-1},t)}\sup_{q\in Q}(q(u)-q(t_{n-1}))\\
    &=\sum_{i=1}^{n-1} \sup_{q\in Q}(q(t_i)-q(t_{i-1})) + \sup_{q\in Q}\sup_{u\in (t_{n-1},t)}(q(u)-q(t_{n-1}))\\
    &=\sum_{i=1}^{n-1} \sup_{q\in Q}(q(t_i)-q(t_{i-1})) + \sup_{q\in Q}(q(t)-q(t_{n-1}))\\
    &=S_{\pi}( Q).
    \end{align*}
    Since ${\rm TV}_{[0,u]}(Q)=\sup_{\pi\in\Pi[0,u]}S_{\pi}(Q)=\sup_{\pi\in\Pi[0,t]}S_{\pi_u}(Q)$ for all $u<t$,
    it follows that 
    \begin{align*}
    \lim_{u\uparrow t}{\rm TV}_{[0,u]}(Q)&=\sup_{u<t}\sup_{\pi\in\Pi[0,t]}S_{\pi_u}(Q)=\sup_{\pi\in\Pi[0,t]}\sup_{u<t}S_{\pi_u}(Q)\\
    &=\sup_{\pi\in\Pi[0,t]}S_{\pi}(Q)= {\rm TV}_{[0,t]}(Q).\qedhere
    \end{align*}
\end{proof}

For $[u,v]\subset [0,1]$ and $\pi\in\Pi[u,v]$, we define $\underline{S}_{\pi}(Q):=\sum_{i=1}^n \sup_{q\in Q}(q_\xi(t_i)-q_\xi^+(t_{i-1}))$.

\begin{proposition}\label{prop:rightIncr} For every $[u,v]\subset[0,1]$, it holds
$${\rm TV}_{[u,v]}(Q)=\sup_{\pi\in\Pi[u,v]} \underline{S}_{\pi}(Q).$$
\end{proposition}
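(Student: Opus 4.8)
The plan is to prove the two inequalities separately, the nontrivial one being $\sup_{\pi}\underline{S}_\pi(Q)\ge {\rm TV}_{[u,v]}(Q)$. One inequality is immediate: since $q(t)\le q^+(t)$ for every quantile function $q$ and every $t$, each summand of $\underline{S}_\pi(Q)$ is dominated by the corresponding summand of $S_\pi(Q)$, so $\underline{S}_\pi(Q)\le S_\pi(Q)$ for all $\pi\in\Pi[u,v]$, and passing to the supremum gives $\sup_\pi\underline{S}_\pi(Q)\le {\rm TV}_{[u,v]}(Q)$. For the other inequality it suffices to show that for every $\pi=\{u=t_0<t_1<\cdots<t_n=v\}$ and every $\epsilon>0$ there is a partition $\pi'\in\Pi[u,v]$ with $\underline{S}_{\pi'}(Q)\ge S_\pi(Q)-\epsilon$; taking the supremum over $\pi$ then yields $\sup_\pi\underline{S}_\pi(Q)\ge {\rm TV}_{[u,v]}(Q)$.

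First I would fix for each $i\in\{1,\dots,n\}$ a near-maximizer $q_i\in Q$ with $q_i(t_i)-q_i(t_{i-1})\ge\sup_{q\in Q}\{q(t_i)-q(t_{i-1})\}-\tfrac{\epsilon}{2n}$, so that $\sum_{i=1}^n\{q_i(t_i)-q_i(t_{i-1})\}\ge S_\pi(Q)-\tfrac\epsilon2$. The key idea is to replace each interior knot $t_i$ by a point $s_i$ slightly to its left. Two facts from Lemma~\ref{lem:basicQuantile} make this work: by left-continuity (part~(v)) one has $q_i(s)\to q_i(t_i)$ as $s\uparrow t_i$, so $s_i\in(t_{i-1},t_i)$ can be taken close enough to $t_i$ that $q_i(s_i)\ge q_i(t_i)-\tfrac{\epsilon}{2n}$; and by part~(iii), any $s<t$ satisfies $q^+(s)\le q(t)$, so using $s_i$ as the \emph{left} endpoint of the next subinterval can only help. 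Choosing each $s_i$ near $t_i$ also gives $s_i<t_i<s_{i+1}$, so that $\pi':=\{u<s_1<\cdots<s_{n-1}<v\}$ is admissible. Bounding the $j$-th summand of $\underline{S}_{\pi'}(Q)$ below by its value at $q_j$ and using $q_j^+(s_{j-1})\le q_j(t_{j-1})$ then shows that every summand of index $j\ge2$ (and in particular the last one, whose right endpoint is $v=t_n$ itself) is at least $q_j(t_j)-q_j(t_{j-1})-\tfrac\epsilon{2n}$.

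The hard part will be the left endpoint $u=t_0$, which cannot be perturbed: the first summand of $\underline{S}_{\pi'}(Q)$ is forced to be $\sup_{q}\{q(s_1)-q^+(u)\}$ and hence involves $q^+(u)$ rather than $q(u)$, so the construction above recovers $S_\pi(Q)$ only up to the boundary defect $q_1^+(u)-q_1(u)\ge0$ sitting at a possible atom at $u$. To absorb this term I would exploit the conventions $q(0):=q(0^+)$, $q(1):=q(1^-)$ together with the identity $q^+(0^+)=q(0^+)$ from Lemma~\ref{lem:basicQuantile}(iii)--(iv), which make $q$ and $q^+$ agree at the endpoints, and the additivity ${\rm TV}_{[0,v]}={\rm TV}_{[0,u]}+{\rm TV}_{[u,v]}$ combined with the left-continuity of $t\mapsto {\rm TV}_{[0,t]}(Q)$ from Proposition~\ref{prop:leftContTV} to localise the interval and reduce matters to this endpoint behaviour. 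Finally, to let the supremum over $Q$ pass through the limits $s_i\uparrow t_i$ without committing to the $q_i$ in advance, I would alternatively invoke the lower semicontinuity of $(s,r)\mapsto q(r)-q^+(s)$ on $(0,1)^2$ from Lemma~\ref{lem:basicQuantile}(vii). I expect this endpoint analysis, rather than the interior refinement, to be the crux of the argument.
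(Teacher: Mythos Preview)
Your core strategy --- perturb the knots of a given partition slightly to the left using left-continuity, then exploit $q^+(s)\le q(t)$ for $s<t$ to pass from $S$ to $\underline{S}$ --- is precisely the paper's. The paper's execution is a bit more streamlined: instead of first fixing near-maximizers $q_i\in Q$, it uses that each map $t\mapsto\sup_{q\in Q}\{q(t)-q(t_{i-1})\}$ is itself left-continuous (as a supremum of non-decreasing left-continuous functions), chooses $u_i\in[t_{i-1},t_i)$ with $\sup_q\{q(u_i)-q(t_{i-1})\}>\sup_q\{q(t_i)-q(t_{i-1})\}-\varepsilon/n$, sets $u_0:=u$, and works with the enlarged partition $\pi'=\{u=u_0<u_1<\cdots<u_n<t_n=v\}$, which has one extra subinterval $[u_n,v]$ on the right. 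Contrary to your expectation, the paper carries out \emph{no} separate endpoint analysis at $u$: it simply asserts the estimate $\sup_q\{q(u_i)-q^+(u_{i-1})\}\ge\sup_q\{q(t_i)-q(t_{i-1})\}-\varepsilon/n$ uniformly for $i=1,\dots,n$ (via $q^+(u_{i-1})\le q(t_{i-1})$) and sums. So the additivity of ${\rm TV}$, the left-continuity of $t\mapsto{\rm TV}_{[0,t]}(Q)$, and the lower semicontinuity from Lemma~\ref{lem:basicQuantile}(vii) that you plan to bring in for the boundary term play no role in the paper's argument; your ``crux'' is simply not present there.
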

 \begin{proof}
By contradiction, we assume that $\sup_{\pi\in\Pi[u,v]} S_{\pi}(Q)>\sup_{\pi\in\prod[u,v]} \underline{S}_{\pi}(Q)$. Then, there exists $\bar\pi=\{u=t_0<t_1<\cdots<t_n=v\}\in\Pi[u,v]$ with
\[
S_{\bar\pi}(Q)>\sup_{\pi\in\Pi[0,t]} \underline{S}_{\pi}(Q).
\]
Fix $\varepsilon>0$ such that $S_{\bar\pi}(Q)-\varepsilon>\sup_{\pi\in\Pi[0,t]} \underline{S}_{\pi}(Q)$.
For every $i\in\{1,2,\cdots,n\}$, the mapping $t\mapsto \sup_{q\in Q}(q(t)-q(t_{i-1}))$ is left-continuous.  
Hence, there exists $u_i\in [t_{i-1},t_i)$ such that
\[
\sup_{q\in Q}(q(u_i)-q(t_{i-1}))>\sup_{q\in Q}(q(t_i)-q(t_{i-1}))-\frac{\varepsilon}{n}.
\]
Since $q^+(u_i)\le q(t_i)$ for all $i$, we get
\[
\sup_{q\in Q}(q(u_i)-q^+(u_{i-1}))\ge\sup_{q\in Q}(q(t_i)-q(t_{i-1}))-\frac{\varepsilon}{n}\quad\mbox{for all }i\in\{1,2,\cdots,n\},
\]
where $u_0:=t_0=u$. 
For the partition $\pi^\prime:=\{u=u_0<\cdots<u_n<t_n=v\}\in\Pi[u,v]$, we obtain  
\[
 \underline{S}_{\pi^\prime}(Q)\ge  S_{\bar\pi}(Q)-\varepsilon
 >\sup_{\pi\in\Pi[0,t]} \underline{S}_{\pi}(Q),
\]
which is a contradiction.
 \end{proof}

\end{appendix}

\end{document}